\theoremstyle{plain}
\newtheorem{Thm}{Theorem}[section]
\newtheorem{Prop}[Thm]{Proposition}
\newtheorem{Lem}[Thm]{Lemma}
\newtheorem{Cor}[Thm]{Corollary}
\newtheorem*{Thm*}{Theorem}
\newtheorem*{Thm5*}{The Local-indicability Cohen-Lyndon Theorem}
\newtheorem*{Thm6*}{The Cohen-Lyndon Theorem}
\newtheorem{Thm5}[Thm]{The Local-indicability Cohen-Lyndon Theorem}
\newtheorem*{Thm3*}{The Edjvet-Howie Theorem}
\theoremstyle{remark}
\theoremstyle{definition}
\newtheorem{Rem}[Thm]{Remark}
\newtheorem{Ex}[Thm]{Example}
\newtheorem{Rems}[Thm]{Remarks}
\newtheorem{Not}[Thm]{Notation}
\newtheorem{Defs}[Thm]{Definitions}
\newtheorem{Sett}[Thm]{Setting}
\numberwithin{figure}{Thm}
\newcommand{\abs}[1]{\left\lvert#1\right\rvert} 
\newcommand{\gen}[1]{\langle\mkern3mu#1\mkern3mu\rangle}
\newcommand{\normgen}[1]{{\langle}{\mkern-3.7mu}{\lvert}{\mkern1.2mu}{#1}{\mkern1.2mu}{\rvert}{\mkern-3.7mu}{\rangle}}
\newcommand{\gp}[2]{\gen{{#1}\mid #2}}
\newcommand{\lsup}[2]{{}^{#1}\mkern-1mu{#2}}
\renewcommand{\le}{\leqslant}
\renewcommand{\ge}{\geqslant}
\def\d1{\discretionary{-}{}{-}}
\def\maxx{\text{max}}
\def\Z{\mathbb{Z}}
\def\R{\mathbb{R}}
\def \leftmod {{\setminus}}
\def\coloneq{\mathrel{\mathop\mathchar"303A}\mkern-1.2mu=}
\DeclareMathOperator{\glue}{glue}
\DeclareMathOperator{\axis}{axis}
\DeclareMathOperator{\Eaxis}{\text{$E$}axis}
\begin{document}

\pagestyle{myheadings}
\markboth{The Local-indicability Cohen-Lyndon Theorem}{Y. Antol\'{\i}n, W. Dicks and P.\ A.\ Linnell}
\title{On the Local-indicability Cohen-Lyndon Theorem}

\author{  Yago Antol\'{\i}n, Warren Dicks and  Peter A.\ Linnell}

\date{\footnotesize\today}

\maketitle

\begin{abstract}
For a group $H$ and a  subset $X$  of $H$, we let $\lsup{H}{\mkern-2.6mu X}$ denote the set
$\{hxh^{-1} \mid h \in H,\, x \in X\}$, and when
  $X$ is a free\d1generating set of  $H$,  we say that the set $\lsup{H}{\mkern-2.6mu X}$
 is   a \textit{Whitehead  subset of}  $H$.

For a group $F$ and an element $r$ of  $F$, we say that $r$ is \textit{Cohen-Lyndon aspherical in $F$}
if   $ \lsup{F}{\mkern-2mu  \{r\}}$   is a Whitehead subset
of the  subgroup of
$F$ that is generated by $ \lsup{F}{\mkern-2mu  \{r\}}$.

In 1963,  D.\ E.\ Cohen and R.\ C.\ Lyndon   independently showed  that
in each free group  each non-trivial element  is Cohen-Lyndon aspherical.
Their  proof used  the celebrated induction method
devised  by W.~Magnus  in 1930 to study one-relator groups.

In 1987, M.\ Edjvet and J.\ Howie  showed
that if $A$~and $B$ are locally indicable groups,
then each cyclically reduced element of $A{\ast}B$ that does not lie in  $A{\cup}B$
is Cohen-Lyndon aspherical in $A{\ast}B$.
Their  proof used  the Cohen-Lyndon Theorem.

Using Bass-Serre Theory and the Edjvet-Howie Theorem, one can deduce
the Local-indicability Cohen\d1Lyn\-don Theorem:
 if
 $F$ is a locally indicable group  and $T$~is an $F$-tree  with trivial edge stabilizers,
then each element of $F$ that fixes no vertex of $T$
is Cohen-Lyndon aspherical in  $F$.
Conversely, the Cohen-Lyndon Theorem and the Edjvet-Howie Theorem
are immediate consequences of the Local-indicability Cohen-Lyndon Theorem.

In this article, we give a detailed   review of  Howie induction
and  arrange the arguments of  Edjvet and Howie  into  a Howie-inductive proof of
the Local-indicability Cohen-Lyndon Theorem that  does not use Magnus induction
or the Cohen-Lyndon Theorem.

We  conclude  with a  review of some standard applications
of Cohen-Lyndon asphericity.

\medskip

{\footnotesize
\noindent \emph{2000 Mathematics Subject Classification.} Primary: 20E08;
Secondary: 20E05, 20E07, 20J06.

\noindent \emph{Key words.} Cohen-Lyndon asphericity, locally indicable groups, groups acting on trees, staggerability}
\end{abstract}

\maketitle

\section{Outline}

\begin{Not}
Let $F$ be a multiplicative group, fixed throughout the article.

The disjoint union of two sets $X$ and $Y$   will be denoted by~$X \vee Y$.

By a \textit{transversal for} a (left or right) $F$-action on a set $X$  we mean
a subset  of $X$ which contains exactly one element of each $F$-orbit of $X$;
by the axiom of choice, transversals always exist.

For elements $x$, $y$ of $F$,  we write $\overline x \coloneq x^{-1}$,  $\lsup{x}{y} \coloneq xy\overline x$,
$[x,y]\coloneq xy\overline x \,\overline y$,
and
\mbox{$\mathbf{C}_F(x) \coloneq \{f \in F \mid \lsup{f}{x}=x\}$.}
For any subgroup $H$ of $F$,  we say that  \mbox{$x$, $y$} are    \textit{$H$-conjugate}
if there exists some $h \in H$ such that $\lsup{h}{x} = y$.
Conjugation actions will always be left actions in this article.

If $R$ and $X$ are subsets  of $F$, we let $\gen{R}$ denote the subgroup of $F$ generated by $R$,
and we write
$\lsup{X}{\mkern-2mu R}\coloneq \{\lsup{x}{r} \mid r \in R, x \in X\}$
 and $F/\normgen{R} \coloneq F/\gen{\lsup{F}{\mkern-2mu R}}$.
If $R$ consists of a single element~$r$, we write simply $\gen{r}$, $\lsup{X}{r}$ and $F/\normgen{r}$, respectively.
We say that $X$   is   a
\textit{free-generating set of~$F$}
if the induced group homomorphism   \mbox{$\gp{X}{\quad} \to F$} is   bijective.

 A subset $Y$ of $F$ is said to be a \textit{Whitehead subset of}  $F$ if
there exists some free-generating set $X$ of $F$ such that  $\lsup{F}{\mkern-2.6mu X} = Y$,
that is, $Y$ is closed under
the $F$-conjugation action   and some transversal for the $F$-conjugation action  on $Y$  is
a free-generating set of $F$.

Borrowing terminology from~\cite{CCH}, we say that  an element  $r$ of $F$ is   \textit{Cohen-Lyndon aspherical in $F$}
 if $ \lsup{F}{\mkern-2mu r}$ is a Whitehead subset of $ \gen{\lsup{F}{\mkern-2mu r}} $.
Thus, $r$  is    Cohen-Lyndon aspherical in $F$  if and only if
some  transversal   for the  $ \gen{\lsup{F}{\mkern-2mu r}}$-conjugation action on $ \lsup{F}{\mkern-2mu r}$
 is  a free-generating set  of~$ \gen{\lsup{F}{\mkern-2mu r}} $.  Thus,
 $r$  is    Cohen-Lyndon aspherical in~$F$  if and only if there exists some subset $X$ of $F$
such that $X$ is a transversal  for the
$(\gen{\lsup{F}{\mkern-2mu r}}\mathbf{C}_F(r))$-action on~$F$   by
multiplication on the right
and, moreover, $\lsup{X}{r}$ is a free-generating set of~$\gen{\lsup{F}{\mkern-2mu r}}$.
Here, $X \ne \emptyset$ and $r \ne 1$ and the map $X \to\lsup{X}{\mkern-2mu r}$, $x \mapsto \lsup{x}{r}$, is
bijective.

We say that a subset $R$ of $F$  is  \textit{Cohen-Lyndon aspherical in $F$}
 if  no two distinct elements of $R$ are $F$-conjugate and
 $ \lsup{F}{\mkern-2mu R}$ is a Whitehead subset of $ \gen{\lsup{F}{\mkern-2.6mu R}} $.
Thus, $R$  is    Cohen-Lyndon aspherical in $F$  if and only if
no two distinct elements of $R$ are $F$-conjugate and
 some
transversal for the  $ \gen{\lsup{F}{\mkern-2.6mu R}}$-conjugation action  on $ \lsup{F}{\mkern-2.6mu R} $
 is  a free-generating set of $\gen{\lsup{F}{\mkern-2.6mu  R}}$.
 Thus, $R$  is    Cohen-Lyndon aspherical in $F$  if and only if
no two distinct elements of $R$ are $F$-conjugate and there exists
some family $(\, X_r \mid r \in R)$ of subsets of $F$ with the properties that, for each $r \in R$, $X_r$ is a
transversal for the $(\gen{\lsup{F}{\mkern-2mu R}}\mathbf{C}_F(r))$-action
on~$F$ by
multiplication on the right  and, moreover,
$\mathop{\bigcup}\limits_{r\in R} (\lsup{X_r\,}{r})$ is  a free-generating set of~$\gen{\lsup{F}{\mkern-2mu R}}$.

A group is  said to be  \textit{indicable} if it
  is  trivial or it has some quotient that is
 infinite and cyclic.
A group is  said to be \textit{locally indicable} if all its finitely generable subgroups are  indicable.
For example, all free groups are locally indicable.
\hfill\qed
\end{Not}

In~\cite[Theorem~4.1]{CohenLyndon}, D.\ E.\ Cohen and R.\ C.\ Lyndon proved the following.

\begin{Thm6*}  If $F$ is a free group,
then each   non-trivial element of~$F$ is Cohen-Lyndon aspherical in $F$.
\hfill\qed
\end{Thm6*}

\noindent Both the  proof  in~\cite{CohenLyndon}
and  its simplification by
A.\ Karrass and D.\ Solitar~\cite[Theorem~2]{KarrassSolitar}  use the famous induction method
that was devised by W.\ Magnus in 1930 to study one-relator groups.

In~\cite[Theorem~1.1]{EdjvetHowie}, M.\ Edjvet and J.\ Howie
used
 the Cohen-Lyndon Theorem   and  Karrass-Solitar reduction
 to prove  the following.

\begin{Thm3*} If $A$~and $B$ are locally indicable groups,
then each cyclically reduced element of $A{\ast}B$ that does not lie in  $A{\cup}B$
is Cohen-Lyndon aspherical in $A{\ast}B$.
\hfill\qed
\end{Thm3*}

By applying Bass-Serre Theory, one can deduce the following.

\begin{Thm5*}  If $F$ is a locally indicable group
and   $T$ is an   $F$-tree   with trivial edge stabilizers, then each
  element of $F$ that fixes no vertex of $T$   is Cohen-Lyndon aspherical in~$F$.
\hfill\qed
\end{Thm5*}

\begin{Ex}\label{Ex:free}  Let  $X$ be a set and let $F = \gp{X}{\quad}$.

Let $T$ be the $F$-graph   with  vertex set  $F$
and  edge set   $F{\times}X$ such that  each edge \mbox{$(f,x) \in  F{\times}X $} has initial vertex $f$
and terminal vertex $fx$.  In a natural way,   $F$ acts  on~$T$.  The stabilizers are trivial.
It is well known that $T$ is a tree; see, for example,~\cite[Theorem~I.7.6]{DicksDunwoody89}.

Thus, the  Cohen-Lyndon Theorem  is
the case of the Local-indicability Cohen-Lyndon Theorem
 where $F$ acts freely on $T$.
\hfill\qed
\end{Ex}

\begin{Ex}\label{Ex:freeproduct} Let $A$ and $B$ be locally indicable groups and let $F = A {\ast}B$.

Let $T$ be the $F$-graph with  vertex set
$(F/A) \vee (F/B)$
and  edge set   $F$ such that  each edge $f \in  F $ has initial vertex $fA$
and terminal vertex $fB$.
In a natural way $F$ acts  on $T$.  The edge stabilizers are trivial and
the elements of $F$ which  fix  vertices of $T$ are the $F$-conjugates of the elements of $A{\cup}B$.
It can be shown that $T$ is a tree; see, for example,~\cite[Theorem~I.7.6]{DicksDunwoody89}.

Thus, the Edjvet-Howie Theorem  is
the case of the Local-indicability Cohen-Lyndon Theorem  where $T$ has only one edge $F$-orbit and two
vertex $F$-orbits. \hfill\qed
\end{Ex}

In his work on locally indicable groups in~\cite{Howie81},~\cite{Howie82},~\cite{Howie00},
Howie developed a powerful induction technique that
amounts to being given certain information about a group,
\begin{list}{}{}
\item and then choosing an appropriate finitely generable subgroup which contains the given information,
\item and then choosing an appropriate  normal subgroup of the finitely generable  subgroup
for which the quotient group is infinite and cyclic,
and translating the given information into information about the
normal subgroup,
\end{list}
\noindent and repeating this two-step cycle  as often as possible.  This  simple procedure  has many applications.
Magnus induction has a formally similar format but requires a  free-generating set
and more careful choices at each step.

In this article, we give a   Bass-Serre-theoretical survey of Howie induction
and  re-arrange the arguments used by  Edjvet and Howie in~\cite{EdjvetHowie}
into a Howie-inductive proof of
the Local-indicability Cohen-Lyndon Theorem   that does not use Magnus induction
or the Cohen-Lyndon Theorem.

In Section~\ref{sec:summary}, we introduce   definitions  concerning staggerable subsets, strongly staggerable subsets,
and other concepts
that we shall be using.

In Section~\ref{sec:3}, we describe the
finite descending chain of subgroups of~$F$ used in Howie induction.
 We then see that the staggerable conditions can be moved all the way down
the chain.

In Section~\ref{sec:4},  we find that at the bottom of any of the  chains of Section~\ref{sec:3} the conditions of
staggerability and local indicability
interact to produce Whitehead subsets.  This section reproduces
parts of Appendix~A of~\cite{ADL} with some modifications to suit the current applications.

In Section~\ref{sec:5}, we recall some technical results of Cohen-Lyndon, of Karrass-Solitar, and of Edjvet-Howie.
These results then allow  us to take information
that was spontaneously generated at the bottom of the chain and move it all the way back up the chain;
the Local-indicability Cohen-Lyndon Theorem then follows.

Corollary~2.2 of~\cite{CohenLyndon} gives a sufficient condition for a  subset of a free group~$F$  to be
Cohen-Lyndon aspherical in   $F$.  In  Section~\ref{sec:6},  we find that the preceding machinery implies
the more general result that,
for any  locally indicable group $F$ and   any   subset $R$ of $F$,  if there exists
some $F$-tree $T$ with trivial edge stabilizers
such that $R$ is strongly
$T$-staggerable modulo~$F$, then $R$
is Cohen-Lyndon aspherical in $F$.   Standard arguments then
yield consequences concerning the quotient group  $G \coloneq F/\normgen{R}$;
for example,  one obtains information about  the  torsion subgroups of $G$ and the
higher homology groups of~$G$.

\section{Staggerability} \label{sec:summary}

In this section, we introduce   definitions  concerning staggerability and other concepts
that we shall be using.
Recall that $F$ is a   multiplicative group.

\begin{Not}
For any   set $X$,  we let  $\abs{X}$ denote the  cardinal of $X$,
and we let $\Z X $ and $\Z[X]$  denote the  $\Z$-module  that is free on $X$.

By an \textit{ordering}  of a set $X$, we shall mean a  binary relation  which \textit{totally} orders
$X$.

We will find it useful to have a notation  for intervals in $\Z$ that is different from the notation for
intervals in $\R$.
Let $i$, $j \in \Z$.
We write $[i{\uparrow}j]\coloneq \{k \in \Z \mid   k \ge i\text{ and } k \le j\}$,
and $\left]-\infty{\uparrow}j\right] \coloneq \{k \in \Z \mid  k \le j\}$, and
 $[i{\uparrow}\infty[\,\,\,\, \coloneq \{k \in \Z \mid k \ge i \}$.

We shall define families of
subscripted symbols by using the following convention.
Let $v$ be a symbol.
For each $k \in \Z$, we let $v_k$ denote the ordered pair
$(v,k)$, and, for each subset $I$ of $\Z$, we let $v_I \coloneq  (v_k \mid k \in I)$.

For two subsets $Y$, $Z$ of a set $X$, the complement of $Y \cap Z$ in $Y$ will be denoted
by $Y-Z$ (and not by $Y\setminus Z$ since we let
 $F\backslash Z$ denote the set of $F$-orbits of a left $F$-set $Z$).

For any subset $Y$ of a left $F$-set $X$, we write
$\glue(F,Y) \coloneq \{f\in F \mid f\,Y \cap  Y \ne \emptyset\}$.

We write $F' \coloneq \gen{\{[x,y]\mid x,y\in F\}} \unlhd F$ and  $F^{\text{ab}} \coloneq
F/F'$, the abelianization of $F$.
\hfill\qed
\end{Not}

\begin{Defs}
 Let  $r$ be an  element of $F$.

We say that $r$  \textit{has a unique root in $F$}  if  $r=1$  or
$r$ lies in a unique maximal infinite cyclic  subgroup of $F$.

If $r \ne 1$ and $r$ lies in a unique maximal infinite cyclic subgroup $C$ of $F$,
we define \textit{the unique root of $r$ in~$F$}, denoted by $\sqrt[F]{r}$,
to be the unique generator  of $C$ of which
$r$ is a positive power.  We say that $1$  \textit{is the unique root of} $1$,
and we define $\sqrt[F] 1 \coloneq  1$.

If $r \ne 1$ and $\mathbf{C}_F(r)$ is infinite and cyclic, then $r$ has a unique root in $F$ and $\gen{\mkern-6mu \sqrt[F]{r}} = \mathbf{C}_F(r)$.

We say that a subset  $R$ of $F$ \textit{has unique roots in $F$} if every element of $R$ has a unique root in $F$,
in which case  we let $\sqrt [F]R$ denote the set of these unique roots in $F$.
\hfill\qed
\end{Defs}

We now define the staggerability concepts we need.
We shall use~\cite{DicksDunwoody89} as our reference  for Bass-Serre Theory.

\begin{Defs}\label{defs:tree} Let $T = (T,VT,ET, \iota,\tau)$ be an $F$-tree and suppose that $ET$ is $F$-free.

\medskip

\noindent\mbox{\phantom{iiv}}(i). Let $r$ be an element of $F$   that   fixes no vertex of $T$.

There exists a  unique  minimal $\gen{r}$-subtree of~$T$, which is denoted by $\axis(r)$ and
has the form of a real line shifted by $r$; see,
for example,  \mbox{\cite[Proposition~I.4.11]{DicksDunwoody89}.}
We write $\Eaxis(r)\coloneq E(\axis(r))$, and we shall be particularly interested in the finite set
$ F\leftmod (F(\Eaxis(r))) \coloneq    \{Fe  \mid e \in \Eaxis(r)\} \subseteq   F\leftmod ET.$

For all $f \in F$, $\axis(\lsup{f}{r}) = f\axis(r)$ and
$F\leftmod (F(\Eaxis(\lsup{f}{r}))) = F\leftmod (F(\Eaxis(r))) $.

For all $n \in \Z - \{0\}$\,,  $\axis(r^n) = \axis(r)$.

Since  $F$ acts freely on $ET$,  it can be shown that $\mathbf{C}_F(r)$  acts freely on
$\axis(r)$, and, by \mbox{Bass-Serre Theory,} $\mathbf{C}_F(r)$ is infinite and cyclic.  Here,
 $r$ has a unique root in $F$ and
\mbox{$\gen{r} \subseteq \gen{\mkern-6mu\sqrt[F]{r}} =\mathbf{C}_F(r)  \subseteq \glue(F, \Eaxis(r))$;}
notice that if $\glue(F, \Eaxis(r)) = \gen{r}$, then   $\sqrt[F]{r} = r$.

\medskip

\noindent\mbox{\phantom{iv}}(ii).  If $<$ is some (total) ordering   of $ET$ and $R$ is some subset of $F$,
 we say that  $R$   is  \textit{$(T,{<})$-staggered modulo~$F$}
if the following three conditions hold.
\begin{enumerate}[$\mkern50mu$(S1).]
\item Each $F$-orbit in $ET$ is an interval in $(ET, <)$; here,
 there  exists a  unique  ordering of  $F\leftmod ET$, again denoted by $<$, with the property that
for all $e$, $e'\in ET$, $Fe < Fe'$ if and only if   $e < e'$ and $Fe \ne Fe'$.
 \item  Each element of $R$ fixes no vertex of~$T$.
\item For each  $(r_1,r_2) \in R{\times}R$,
exactly one of the following three conditions holds.
\begin{enumerate}[(a)]
\item $\lsup{F}{\mkern-2mu r}_1 = \lsup{F}{\mkern-2mu r}_2$.
\item In $(F\leftmod ET, <)$, \newline
$\min (F\leftmod (F(\Eaxis(r_1)))) \,\,  < \,\, \min(F\leftmod (F(\Eaxis(r_2)) ))$  and \newline
$\max (F\leftmod (F(\Eaxis(r_1)))) \,\, < \,\, \max(F\leftmod (F(\Eaxis(r_2)) ))$.
\item In $(F\leftmod ET, <)$,   \newline
$\min (F\leftmod (F(\Eaxis(r_2)))) \,\,  < \,\, \min(F\leftmod (F(\Eaxis(r_1))) )$  and \newline
$\max (F\leftmod (F(\Eaxis(r_2)))) \,\, < \,\, \max(F\leftmod (F(\Eaxis(r_1))) )$.
\end{enumerate}
\end{enumerate}

We say that $R$  is  \textit{strongly $(T,{<})$-staggered modulo~$F$} if, moreover,
the following four conditions hold.
\begin{enumerate}[$\mkern50mu$(S1).]
\setcounter{enumi}{3}
\item No two distinct elements of $R$ are $F$-conjugate.
\item $F\leftmod T$ has a unique maximal subtree.
\item $(F\leftmod ET, <)$ is order isomorphic to an interval in $\Z$.
\item Any two $<$-consecutive edges in $(F\leftmod ET, <)$ have a vertex in common in  $F\leftmod T$.
\end{enumerate}

We have two types of examples  in mind satisfying (S5), (S6) and (S7):
 cases where $F\leftmod T$ has only one vertex  and   cases where
 $F\leftmod T$ has the form of the real line.

\medskip

\noindent\mbox{\phantom{v}}(iii). A subset $R$ of $F$ is said to be \textit{$T$-staggerable modulo $F$}
 if
there exists some ordering $<$  of $ET$
such that $R$ is $(T,{<})$-staggered modulo $F$.

Notice that if $R$ is  $T$-staggerable modulo~$F$, then $R$ has unique roots in~$F$.

Notice that if $\abs{R}=1$, then $R$ is  $T$-staggerable modulo~$F$ if and only if
 the unique element of $R$ fixes no vertex of $T$.

A subset $R$ of $F$ is said to be \textit{strongly $T$-staggerable modulo $F$}
 if
there exists some ordering $<$  of $ET$
such that $R$ is strongly $(T,{<})$-staggered modulo $F$.
\hfill\qed
\end{Defs}

\begin{Rems}
In the case where $F$ is free, the concept of a staggered subset appeared in Howie's \mbox{article~\cite[p.642]{Howie00}}
and was a generalization, from the case where $\abs{F\leftmod VT} = 1$ and $F$ is free,
of a definition  of a staggered presentation  that is given in
 Lyndon and Schupp's book~\cite[p.152]{LyndonSchupp}.
In the case where $\abs{F\leftmod VT} = 1$ and $F$ is free, the above
concept of a strongly staggered subset
corresponds to a definition that is given in \mbox{\cite[ p.104]{LyndonSchupp}}
of what could reasonably  be called a  strongly staggered presentation
but is somewhat confusingly called a ``staggered presentation";  such  strongly staggered presentations
 arise unnamed in the hypotheses of Lyndon's (Non-Simple) Identity Theorem~\cite[Section~7]{Lyndon} and
Cohen and Lyndon's Corollary~2.2~\cite{CohenLyndon}.

In Corollary~\ref{Cor:big} below, we show that, for any locally indicable group $F$ and any subset $R$  of $F$,
if there exists some $F$-tree $T$
with trivial edge stabilizers such that $R$  is strongly
$T$-staggerable modulo~$F$, then $R$ is  Cohen-Lyndon aspherical in $F$.  The case of  Corollary~\ref{Cor:big}
where $\abs{F\leftmod VT} = 1$  and $F$ is free  is precisely
the Cohen-Lyndon result for  strongly staggered presentations~\mbox{\cite[Corollary~2.2]{CohenLyndon};}
see  \mbox{also~\cite[Proposition III.11.1]{LyndonSchupp}.}
\hfill\qed
\end{Rems}

We shall be using the following well-known observation.

\begin{Rem}\label{Rem:tree}
Let $X$ be a generating set of $F$, let $T$ be an $F$-tree,
and let $Y$ be a subtree of $T$ such that, for some vertex $v$ of $T$, $Y \supseteq \{v \} \cup Xv $, or,
more generally, such that \mbox{$X \subseteq \glue(F,Y)$.}

Consider the $F$-subforest $FY$ of $T$, and let $T'$ denote the component of $FY$ that contains~$Y$.
The  set $\{f\in F \mid fY \subseteq T'\}$
 is closed under right multiplication by elements of $X \cup X^{-1}$ and contains $1$, hence it is all of $F$.
Thus $FY = T'$. Hence $FY$ is an $F$-subtree of~$T$. Moreover, the map $Y \to F \leftmod T'$, $y \mapsto Fy$, is surjective.
\hfill\qed
\end{Rem}

\section{Moving information down a chain}\label{sec:3}

In this section, without mentioning local indicability, we shall see how to
move a staggerable subset $S$ down a finite chain of subgroups
to a finitely generable group whose abelianization is virtually generated by $S$.
The finite descending chain of subgroups is essentially the same chain of subgroups considered by Howie in
his tower arguments in~\cite{Howie82} and~\cite{Howie84}.

We shall often consider the following situation.

\begin{Sett}\label{Sett:1}
Let  $S$ and~$\Phi$  be finite subsets of $F$ such that
$\gen{S{\cup}\Phi}/\normgen{S}$ has no infinite, cyclic quotients.

 Let $T$ be  an $F$-tree with trivial edge stabilizers.

Choose  an arbitrary vertex  $v$    of~$T$.
Let $Y$  be the smallest  subtree  of $T$ which contains \mbox{$\{v\} \cup Sv\cup \Phi v$.}
Then $Y$ is finite.
 Since $F$ acts freely on $ET$, $\glue(F,EY)$ is finite.
We set  \mbox{$S^+ \coloneq S\, \cup \Phi \,  \cup \,\glue(F,EY)$.}  Then $S^+$ is a finite
subset of  $ \glue(F,Y)$. We set $\nu \coloneq {\abs{S^+}+1}$.
For each subgroup $H$ of $F$, we set $H^\dag \coloneq \gen{S^+ \cap H}$.
Notice that if $H$ contains $S \cup \Phi$, then $H^\dag$ also contains $S\cup \Phi$.

We set $F_0\coloneq F$.
Suppose that, for some $n \in [0{\uparrow}(\nu{-}1)]$,
we have  a subgroup $F_n$ \mbox{of $F$} \mbox{containing $S$.}
If  $F_n^\dag /\normgen{S}$ has no infinite, cyclic quotients,
we choose $F_{n+1} \coloneq F_n^\dag$; otherwise, we choose   $F_{n+1}$ to be an arbitrary normal subgroup of  $F_n^\dag $
which contains~$S$ such that   $F_n^\dag/F_{n+1}$ is infinite and cyclic.
After $\nu$ such steps, we   will have  recursively chosen a finite, monotonically decreasing sequence
$F_{[0{\uparrow}\nu]}$
 of subgroups of $F$ that contain  $S$.
\hfill\qed
\end{Sett}

\begin{Prop}\label{Prop:1} In
{\normalfont \mbox{Setting \ref{Sett:1}}},  the following hold.
\begin{enumerate}[{\normalfont (i).}]
\item  $F_{\nu} =\gen{S^+ \cap F_{\nu}} $ and $S \cup \Phi\subseteq F_\nu$ and
$F_{\nu}/\normgen{S}$ has no infinite, cyclic quotients.
\item If $S$ is $T$-staggerable modulo $F$, then $S$ is $T$-staggerable modulo~$F_{\nu}$.
\end{enumerate}
\end{Prop}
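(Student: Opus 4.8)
The plan is to handle the two parts separately: part (i) reduces to tracking a single monotone integer along the chain together with one factoring argument, while part (ii) amounts to refining the given ordering and is where essentially all the difficulty lies.

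\medskip

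\noindent\emph{Part (i), stabilisation of the chain.} I would set $d_n\coloneq\abs{S^+\cap F_n}$. Since $F_{n+1}\subseteq F_n^\dag\subseteq F_n$ in every case, $(d_n)$ is non-increasing. If step $n$ is a \emph{no-drop} step (so $F_{n+1}=F_n^\dag$, taken precisely when $F_n^\dag/\normgen{S}$ has no infinite cyclic quotient), then from $S^+\cap F_n\subseteq\gen{S^+\cap F_n}=F_{n+1}\subseteq F_n$ one gets $S^+\cap F_{n+1}=S^+\cap F_n$, whence $d_{n+1}=d_n$ and moreover $F_{n+1}^\dag=F_{n+1}$ with the same quotient hypothesis; so once a no-drop step occurs the sequence is constant thereafter. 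If step $n$ is a \emph{drop} step, the surjection $F_n^\dag=\gen{S^+\cap F_n}\onto F_n^\dag/F_{n+1}\cong\Z$ is nontrivial, so some element of $S^+\cap F_n$ lands outside $\ker=F_{n+1}$, giving $d_{n+1}\le d_n-1$. As $d_0=\abs{S^+}$ and $d_n\ge 0$, at most $\abs{S^+}=\nu-1$ of the $\nu$ steps can be drops, so at least one no-drop step occurs and the tail is constant. Hence $F_\nu=F_\nu^\dag=\gen{S^+\cap F_\nu}$ and $F_\nu/\normgen{S}$ has no infinite cyclic quotient.

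\medskip

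\noindent\emph{Part (i), that $\Phi\subseteq F_\nu$.} I would show $\Phi\subseteq F_n$ for all $n$ by induction, the base case being $F_0=F$. Given $\Phi\subseteq F_n$, also $S\cup\Phi\subseteq S^+\cap F_n$, so $\gen{S\cup\Phi}\subseteq F_n^\dag$. At a no-drop step $\Phi\subseteq F_n^\dag=F_{n+1}$ at once. At a drop step, let $\phi\colon F_n^\dag\onto\Z$ have kernel $F_{n+1}$; since $S\subseteq F_{n+1}$, the restriction $\phi|_{\gen{S\cup\Phi}}$ kills $S$, and being a homomorphism to an abelian group it kills the normal closure of $S$ in $\gen{S\cup\Phi}$, so it factors through $\gen{S\cup\Phi}/\normgen{S}$. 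By the standing hypothesis this group has no infinite cyclic quotient, so $\phi|_{\gen{S\cup\Phi}}=0$, i.e. $\gen{S\cup\Phi}\subseteq F_{n+1}$ and in particular $\Phi\subseteq F_{n+1}$. This factoring is the only place the Setting hypothesis on $\gen{S\cup\Phi}/\normgen{S}$ is used, and it is the crux of (i).

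\medskip

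\noindent\emph{Part (ii), set-up.} Since $\axis(r)$ and $\Eaxis(r)$ are defined intrinsically from $\gen{r}\subseteq F_\nu$, they are unchanged on restricting the action to $F_\nu$, and $ET$ remains $F_\nu$-free; thus (S2) is inherited verbatim and only (S1) and (S3) need attention, now with $F_\nu$-orbits in place of $F$-orbits. Starting from an ordering $<$ witnessing $(T,<)$-staggering modulo $F$, I would build $<_\nu$ by keeping the order between distinct $F$-orbits and subdividing each $F$-orbit into its $F_\nu$-orbits placed as consecutive sub-blocks; declaring each $F_\nu$-orbit an interval of $ET$ yields (S1). Because the between-$F$-orbit order is preserved, the $F_\nu$-orbit $\min_\nu(r)$ lies over the $F$-orbit $\min(r)$, and likewise for $\max$. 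Consequently a pair in case (b) or (c) of (S3) modulo $F$ (so with $\min(r_1)\ne\min(r_2)$ as $F$-orbits) automatically satisfies the same alternative modulo $F_\nu$ and cannot be $F_\nu$-conjugate; and a pair that \emph{is} $F_\nu$-conjugate satisfies (a) modulo $F_\nu$, since then $F_\nu\Eaxis(r_1)$ and $F_\nu\Eaxis(r_2)$ meet exactly the same $F_\nu$-orbits, so (b),(c) fail.

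\medskip

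\noindent\emph{Part (ii), the obstacle.} The only delicate pairs are $r_1,r_2\in S$ that are $F$-conjugate (case (a) modulo $F$) but not $F_\nu$-conjugate; for these I must realise alternative (b) or (c) modulo $F_\nu$ by a careful choice of the within-$F$-orbit order. The hard part is a structural lemma: for such a pair the sets of $F_\nu$-orbits of axis-edges lying over the common $F$-orbits $\min(r_1)=\min(r_2)$ and $\max(r_1)=\max(r_2)$ must be distinct (otherwise no ordering could separate them and (S3) would fail), which I expect to deduce from the non-conjugacy together with the free edge action and the $\gen{r_i}$-periodicity of the $F_\nu$-orbit pattern along each axis. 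Granting this, I would still have to order the finitely many fibres simultaneously so that, on each $F$-conjugacy class, the order induced on its $F_\nu$-conjugacy subclasses at the minimal fibre agrees in direction with the one at the maximal fibre (ensuring exactly one of (b),(c) holds) and is compatible with the already-fixed cross-class comparisons. Finiteness of $S$ reduces this to a finite consistency problem, but carrying it out is the main obstacle of part (ii).
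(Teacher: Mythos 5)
Your part (i) is correct and is essentially the paper's own argument: the drop-counting via $d_n=\abs{S^+\cap F_n}$ (at an infinite-cyclic step the generating set $S^+\cap F_n$ of $F_n^\dag$ cannot lie in the kernel $F_{n+1}$, so $d_n$ strictly drops, and this can happen at most $\abs{S^+}=\nu{-}1$ times), the stabilization of the chain after the first no-drop step, and the factoring of $F_n^\dag\onto\Z$ through $\gen{S\cup\Phi}/\normgen{S}$ to keep $S\cup\Phi$ inside $F_{n+1}$ all match the paper. Part (ii), however, has a genuine gap, and you have located it yourself: your one-shot refinement of $<$ from $F$-orbits to $F_\nu$-orbits leaves open exactly the pairs $r_1,r_2\in S$ that are $F$-conjugate but not $F_\nu$-conjugate. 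The ``structural lemma'' you hope for (distinctness of the $F_\nu$-orbit fibres over the common minimal and maximal $F$-orbits) is asserted without proof and is not obvious, and even granting it you would still have to solve the simultaneous-ordering consistency problem, which you also leave open. Tellingly, nothing in your argument uses the inclusion $\glue(F,EY)\subseteq S^+$, which is the entire point of the construction of $S^+$ in Setting~\ref{Sett:1}.

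The paper avoids your obstacle altogether by inducting down the chain one link at a time, with a different mechanism at each of the two kinds of step. At a step $F_n\to F_n^\dag=\gen{S^+\cap F_n}$ one shows that conjugacy on $S$ is not lost at all: if $r,\lsup{f}{r}\in S$ with $f\in F_n$, then $\axis(r)\subseteq\gen{r}(Y)\subseteq F_n^\dag(Y)$ and $f\axis(r)=\axis(\lsup{f}{r})\subseteq F_n^\dag(Y)$, so $f\in\glue(F_n,F_n^\dag(EY))$, whence $F_n^\dag fF_n^\dag$ meets $\glue(F_n,EY)\subseteq S^+\cap F_n\subseteq F_n^\dag$ and therefore $f\in F_n^\dag$; so after reordering within $F_n$-orbits to make $F_n^\dag$-orbits intervals, no problematic pair exists. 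Conjugacy can only be lost at a step $F_n^\dag\to F_{n+1}$ with $F_n^\dag/F_{n+1}$ infinite cyclic, generated by $zF_{n+1}$, and there the situation is completely controlled: within each $F_n^\dag$-orbit the $F_{n+1}$-orbits form a single free $\gen{z}$-orbit, which one orders as a $\Z$-chain shifted upward by $z$; then for a pair $r,\ \lsup{z^if}{r}\in S$ with $f\in F_{n+1}$ and $i>0$, one has $F_{n+1}\mkern-2mu\leftmod(F_{n+1}(\Eaxis(\lsup{z^if}{r})))=z^i(F_{n+1}\mkern-2mu\leftmod(F_{n+1}(\Eaxis(r))))$, so both the min and the max strictly increase and alternative (b) of (S3) holds automatically (and (c) when $i<0$). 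Thus the chain is designed precisely so that formerly-conjugate, now-nonconjugate pairs arise only across a $\Z$-quotient, where the $z$-shift supplies the staggering for free; your finite consistency problem never has to be confronted in general form. To repair your proof, abandon the direct passage from $F$ to $F_\nu$ and run the induction step by step as above.
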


\begin{proof} (i).
  If
$n \in [0{\uparrow}(\nu{-}1)]$ and $S \cup \Phi \subseteq F_n$, then
$S \cup \Phi \subseteq F_n^\dag$ and, since $\gen{S \cup \Phi}/\normgen{S}$ has no infinite, cyclic quotients,
 $S \cup \Phi \subseteq F_{n+1}$.
By induction, \mbox{$S \cup \Phi \subseteq   F_\nu$.}

Consider any $n \in [0{\uparrow}(\nu{-}1) ]$.
If $F_{n+1} <  F_n^\dag \,\,\,\,\,(= \gen{S^+ \cap F_n} \le F_n)$, then
$ F_{n+1} \not \supseteq  S^+  \cap F_n $, and, hence, $ S^+ \cap F_{n+1} \subset S^+  \cap F_n$;
notice that the number of those $n$ for which the latter happens is at most $\abs{S^+} = \nu{-}1$.
It follows that there exists some  $\mu \in [0{\uparrow}(\nu{-}1)]$ such that
$F_{\mu+1} = F_\mu^\dag$.  Then $F_\mu^\dag /\normgen{S}$
 has no infinite, cyclic quotients, and
$$F_\mu^\dag  = \gen{S^+ \cap F_\mu}  = \gen{S^+ \cap F_\mu^\dag} = \gen{S^+ \cap F_{\mu+1}}
= F_{\mu+1}^\dag.$$  It then follows that the sequence
$F_{[\mu{\uparrow}\nu]}^\dag$  is constant, and then
 (i) follows.

 (ii). By hypothesis, $S$ is $T$-staggerable modulo  $F_0 \,\,\,\,(= F)$.

Let $n$ be an element of $ [0{\uparrow}(\nu{-}1)]$ such that $S$ is $T$-staggerable modulo $F_n$.
By induction, it suffices to show that $S$ is $T$-staggerable modulo $F_{n+1}$.

There exists some ordering $<$ of $ET$ such that $S$ is $(T,<)$-staggered modulo $F_n$.
By altering the ordering $<$ of $ET$ within each $F_n$-orbit, we can arrange that
every $F_n^\dag$-orbit is an interval with respect to~$<$.
We claim that $S$ is now $(T,<)$-staggered modulo $F_n^\dag$.
It  suffices to show that if  two elements of $S$ are $F_n$-conjugate,
then they are $F_n^\dag$-conjugate.  Suppose then that we have
  $f \in F_n$ and  $r$, $\lsup{f}{r} \in S \subseteq F_n^\dag$.  Now
\mbox{$\axis(r) \subseteq \gen{r}(Y) \subseteq F_n^\dag(Y)$} and
$f\axis(r) = \axis(\lsup{f}{r}) \subseteq \gen{\lsup{f}{r}}(Y) \subseteq F_n^\dag(Y)$.
Hence,
$f \in  \glue(F_n,F_n^\dag(EY))$.  Thus $F_n^\dag fF_n^\dag$ meets
$$\glue(F_n, EY)  \subseteq S^+ \cap F_n \subseteq \gen{S^+ \cap F_n} = F_n^\dag.$$
Hence $f \in F_n^\dag$.  Thus    $S$ is $(T,<)$-staggered modulo $F_n^\dag$.

If $F_{n+1} = F_n^\dag$, then $S$ is    $(T, <)$-staggered modulo $F_{n+1}$ as desired.
Thus, we may assume that
$F_n^\dag/F_{n+1}$ is infinite and cyclic.
There exists some  $z \in F_n^\dag$ such that $zF_{n+1}$ generates $F_n^\dag/F_{n+1}$.
Then $F_n^\dag = \gen{z}F_{n+1}$.  Since $F_n^\dag/F_{n+1}$ is infinite,
 \mbox{$\gen{z} \cap F_{n+1} = \{1\}$.}  Within each $F_n^\dag$-orbit in $ET$, the
$F_{n+1}$-orbits are permuted by $z$ and form  a single $\gen{z}$-orbit.
By altering the ordering $<$ of $ET$ within each $F_n^\dag$-orbit, we can arrange that
every $F_{n+1}$-orbit is an interval with respect to~$<$, and such that
$z$ moves each $F_{n+1}$-orbit to the  next $<$-largest orbit.
It suffices to examine two elements of $S$ which are $F_n^\dag$-conjugate.
 Suppose then that we have
  $f \in F_{n+1}$ and $i \in \Z$ and $r \in S$ such that  $\lsup{z^if}{r} \in S$.
If $i=0$, then $r$ and $\lsup{z^if}{r}$ are $F_{n+1}$-conjugate, which is
one of the desired possibilities. We  now  assume that $i > 0$; the case where $i < 0$ is similar.
Now
$$F_{n+1}\!\leftmod (F_{n+1}(\Eaxis(\lsup{z^if}{r}))) = F_{n+1}\!\leftmod (F_{n+1}(z^if\Eaxis(r)))
=   z^i(F_{n+1}\!\leftmod ( F_{n+1}(\Eaxis(r))).$$
Thus,
$$ \min(F_{n+1}\!\leftmod (F_{n+1}(\Eaxis(\lsup{z^if}{r})) )) = z^i\min (F_{n+1}\!\leftmod( F_{n+1}(\Eaxis(r))))  >
\min (F_{n+1}\!\leftmod (F_{n+1}(\Eaxis(r)))),$$
$$ \max(F_{n+1}\!\leftmod (F_{n+1}(\Eaxis(\lsup{z^if}{r}))) ) = z^i\max (F_{n+1}\!\leftmod (F_{n+1}(\Eaxis(r))) )  >
\max (F_{n+1}\!\leftmod (F_{n+1}(\Eaxis(r)))).$$
Hence, $S$ is   $(T,<)$-staggered modulo $F_{n+1}$, as desired.
\end{proof}

\section{New information generated at the bottom of a chain}\label{sec:4}

In this section, we review, with some changes, the main results of~\cite[Appendix~A]{ADL}.
The following is a minor extension of~\cite[Lemma~A.2.1]{ADL}.

\begin{Lem}\label{lem:bottom} Let $F$ be a finitely generable, locally indicable group,
let $T$ be an $F$-tree with trivial edge stabilizers, and let
$S$ be a subset of $F$ such that $S$ is $T$-staggerable modulo $F$ and  $\sqrt[F]{S}=S$.
Then the following hold.
\begin{enumerate}[{\normalfont (i).}]
\item $F/\normgen{S}$ is indicable.
\item If $F/\normgen{S}$ is trivial, then, for each $r \in S$,
$\glue(F, \Eaxis(r)) =  \gen{r}$, and
$F$ acts freely on~$T$, and   $\lsup{F}{\mkern-2mu S}$  is a  Whitehead subset of $F$.
\end{enumerate}
\end{Lem}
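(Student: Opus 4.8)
\emph{Strategy.} My plan is to prove (i) by induction on the number of $F$-conjugacy classes represented in $S$ (reducing first to finitely many classes, using that $F$ is finitely generable) and then to read off (ii) from the base of that induction. The staggered ordering of $ET$ is what drives the induction. Let $\bar e_0$ denote the $<$-largest element of $F\leftmod ET$ that lies in $F\leftmod(F(\Eaxis(r)))$ for some $r \in S$, and choose $r_0 \in S$ attaining it. Condition (S3) then forces $r_0$ to be, up to $F$-conjugacy, the \emph{unique} element of $S$ whose axis meets the orbit $\bar e_0$: any $r$ not $F$-conjugate to $r_0$ has strictly smaller maximum, so $\bar e_0 \notin F\leftmod(F(\Eaxis(r)))$ and the axis of $r$ avoids $\bar e_0$ altogether.

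\emph{The inductive step for (i).} I would collapse every edge orbit of $T$ except $Fe_0$; since edge stabilizers are trivial this produces an $F$-tree with a single $F$-free edge orbit, so by Bass--Serre theory $F$ splits as a free product $F = P \ast Q$ (or $F = P \ast \gen{t}$ when $\bar e_0$ is a loop of $F\leftmod T$). By the previous paragraph $r_0$ remains hyperbolic for this splitting while every other element of $S$ becomes elliptic, hence is $F$-conjugate into $P$ or into $Q$. Each vertex group is a finitely generable, locally indicable subgroup of $F$ acting (with trivial edge stabilizers) on the subtree of $T$ collapsed to its vertex, and the restriction of $<$ keeps the relators landing in it staggered; thus the inductive hypothesis applies inside $P$ and inside $Q$. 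Passing a free product to the quotient by the normal closure of subsets of its factors yields the free product of the factor quotients, and a free product of indicable groups visibly admits a map $\onto \Z$ unless it is trivial; this disposes of $S - \lsup{F}{r_0}$. It then remains to adjoin the single class $\lsup{F}{r_0}$, a one-relator quotient whose relator is not a proper power (because $\sqrt[F]{S} = S$); here I would appeal to the one-relator theory over locally indicable groups to retain indicability, and making this last step legitimate is the delicate point discussed below.

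\emph{Part (ii).} Suppose $F/\normgen{S}$ is trivial. Running the same induction with trivial quotient forces every factor quotient to be trivial, and the accompanying injectivity of the vertex groups into the quotient (a Freiheitssatz) forces every vertex stabilizer to be trivial; hence $F$ acts freely on $T$ and, acting freely on a tree, $F$ is free. Because $\sqrt[F]{S} = S$ we have $\mathbf{C}_F(r) = \gen{\mkern-6mu\sqrt[F]{r}} = \gen{r}$ for each $r \in S$, and I would prove simultaneously that distinct $F$-conjugates of the elements of $S$ have edge-disjoint axes, which upgrades the inclusion $\mathbf{C}_F(r) \subseteq \glue(F,\Eaxis(r))$ to the equality $\glue(F,\Eaxis(r)) = \gen{r}$. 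Finally $\gen{\lsup{F}{S}} = \normgen{S} = F$, so to see that $\lsup{F}{S}$ is a Whitehead subset of $F$ it suffices to exhibit a transversal for the $F$-conjugation action on $\lsup{F}{S}$ that is a free basis of the free group $F$. I would select, for each class, the conjugate of $r$ whose axis begins at the $<$-least available edge orbit and verify, by processing the edge orbits of $F\leftmod T$ in increasing order (each orbit being introduced by exactly one chosen conjugate), that these conjugates generate $F$; matching their number against $\rank F$ then shows, by Nielsen's theorem, that they form a basis.

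\emph{Main obstacle.} The genuine difficulty is the passage through the final relator: retaining indicability, and the vertex-group injectivity used in (ii), once the one remaining class $\lsup{F}{r_0}$ is adjoined to a group that the induction delivers as merely indicable rather than locally indicable. This cannot be extracted from abelianized homology, because the staggered ordering controls which edge orbits, and hence which free factors, each relator involves, but not the exponent sums that govern $H_1$, so cancellation can in principle annihilate the naive rank count. Overcoming it is precisely the locally-indicable one-relator Freiheitssatz that underlies this whole circle of ideas, and it is the staggering --- the uniqueness of the top class and the fact that the ordering is inherited on the collapsed subtrees --- that lets the inductive bookkeeping close.
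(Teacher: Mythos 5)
Your setup — isolating, via (S3), a top class $r_0$ whose axis alone meets the $<$-largest relevant edge orbit, and splitting $F$ along that orbit so that all other elements of $S$ fall into the factors — is exactly the paper's skeleton (the paper removes the orbit $Fe$ and works with the components $T_\iota$, $T_\tau$ of $T-Fe$ and their stabilizers $F_\iota$, $F_\tau$ rather than collapsing, but the effect is the same). The genuine gap is the step you yourself flag: adjoining the last class $\lsup{F}{r_0}$. Appealing to ``one-relator theory over locally indicable groups'' is circular in this paper, where the Brodski\u{\i}-Howie-Short theorem (Corollary~\ref{Cor:Howie}) is \emph{deduced} from this lemma via Corollary~\ref{Cor:1} and Theorem~\ref{Thm:1}; and, as you observe, it would not even apply as stated, since your induction delivers the factor quotients only as indicable, not locally indicable. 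Your proposal names the obstacle but contains no mechanism to overcome it, so as written it proves neither (i) nor the Freiheitssatz-type input you want for (ii).

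The idea you are missing is the paper's dichotomy, which makes the final relator harmless without any one-relator theory. Case 1: if both $F_\iota/\normgen{S_\iota}$ and $F_\tau/\normgen{S_\tau}$ have infinite cyclic quotients, then $F/\normgen{S-\lsup{F}{r}}$ has a rank-two free-abelian quotient, and killing the single remaining class $r$ can lower the abelianized rank by at most one, so $F/\normgen{S}$ still maps onto $\Z$ — contrary to your claim, abelianized homology \emph{does} suffice precisely when both sides are infinite. Case 2: if one side, say $F_\iota/\normgen{S_\iota}$, has no infinite cyclic quotient, then the inductive hypothesis (indicable plus no $\Z$-quotient) forces it to be \emph{trivial}, and the induction's stronger output — $F_\iota$ acts freely on $T_\iota$, a conjugacy transversal of $S_\iota$ freely generates $F_\iota$, and $\glue(F,\Eaxis(r_\iota))=\gen{r_\iota}$ for each $r_\iota\in S_\iota$ — is explicit enough to finish directly: either $\glue(F,\Eaxis(r))=\gen{r}$, in which case one shows $rT_\iota=T_\tau$, hence $F=F_\iota\ast\gp{r}{\quad}$, yielding the trivial quotient, the free action, and the Whitehead conclusion of (ii) simultaneously; or else one restarts the whole argument with a new pair $(r_\iota,e_\iota)$ chosen at the $<$-\emph{minimum} of the axes of $S_\iota$, for which the glue equality already holds, so the restart is forced to terminate in Case 1 or Case 2.1. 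This is also why the glue equalities in (ii) must be carried \emph{through} the induction rather than proved afterwards as you propose — the glue condition on the bottom relator is exactly what fuels the restart; your plan to recover it later from edge-disjointness of axes, and to get the Whitehead property by a rank count plus Nielsen's theorem, replaces the explicit decomposition $F=F_\iota\ast\gp{r}{\quad}$ with an argument that has no engine behind it. Note finally that the paper first reduces to $F\leftmod T$ finite via Remark~\ref{Rem:tree} and inducts on $\abs{F\leftmod ET}$, which keeps the bookkeeping finite without your detour through counting conjugacy classes.
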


\begin{proof} Recall from Definitions~\ref{defs:tree}
that $S$ has unique roots in $F$.

Without loss of generality, we may replace $S$ with $\lsup{F}{\mkern-2mu S}$, and
we then have $\lsup{F}{\mkern-2mu S} =S$.

By hypothesis, there exists some ordering   $<$ of   $ET$
such that $S$ is $(T, {<})$-stag\-gered modulo~$F$.

Let $X$ be a finite generating set of $F$, let
$v$ be a vertex of $T$, let $Y$ be the smallest subtree of $T$ containing $\{v\} \cup Xv$,
and let $T' \coloneq FY$.  By Remark~\ref{Rem:tree}, $T'$ is an $F$-subtree of $T$ and $F \leftmod T'$ is finite.
Consider any $r \in S$.  Then $T'$ is an $\gen{r}$-subtree of $T$.  Since  $\axis(r)$ is the
unique smallest $\gen{r}$-subtree of $T$, we see that $T'$ contains $\axis(r)$.
It follows that $S$ is $(T',<)$-staggered modulo~$F$.  It now
suffices to prove the result with $T'$ in place of $T$.
Thus, we may assume that $F \leftmod T$ is finite.

By induction, we may assume that the result holds for all smaller values of $\abs{F\leftmod ET}$.

Since  $F$ is a finitely generable, locally indicable group,
$F$  is indicable, and it can be seen that (i) and (ii) hold when $S$ is empty.
Thus we  may assume that $S$ is  non-empty.  It then follows from
 Definitions~\ref{defs:tree} that $\abs{F\leftmod ET} \ge 1$.
In particular,  there exists some
$e_\maxx \in \bigcup\limits_{r  \in S} \Eaxis(r)$
such that,   in $(F\leftmod ET, <)$, $$Fe_\maxx
\quad = \quad \max \{Fe \mid e \in\textstyle \bigcup\limits_{r  \in S} \Eaxis(r)\}.$$
There then exists some $r_\maxx \in S$ such that $e_\maxx \in \Eaxis(r_\maxx)$,
and, by the definition of $(T,{<})$-stag\-gered modulo $F$,
$Fe_\maxx$ does not meet the axis of any element of
$S - \lsup{F}{\mkern-2mu(r_\maxx)}$.
Thus there exists some pair $(r,e)$, for example  $(r,e) = (r_\maxx, e_\maxx)$, such that the following hold.
\begin{align}
&r \in S \text{, } e \in \Eaxis(r) \text{, and }
  Fe \text{ does not meet the axis of any element of } S- \lsup{F}{\mkern-2mu r}. \label{eq:disj}
 \\
&
\text{If }  \glue(F, \Eaxis(r))  \ne  \gen{r}, \text{ then } (r,e) = (r_\maxx,e_\maxx).\label{eq:max}
\end{align}

We consider the $F$-forest $T - Fe$.  Let $T_\iota$ denote the component of $T - Fe$ containing
$\iota e$, and let $T_\tau$ denote the component  of $T - Fe$ containing $\tau e$.
Let $F_\iota$ denote the $F$-stabilizer of $\{T_\iota\}$,
and let   $F_\tau$ denote the $F$-stabilizer of $\{T_\tau\}$, where we are using set brackets to
emphasize that we want to consider each component of $T - Fe$  as a single element.
Let $S_\iota \coloneq S \cap F_\iota$ and $S_\tau \coloneq S \cap F_\tau$.
By~\eqref{eq:disj}, for each $r' \in S -  \lsup{F}{\mkern-2mu r} $, $\axis(r')$ lies in $T - Fe$ and hence lies in a
component of $T - Fe$.  It follows that $\lsup{F}{\mkern-2mu S}_\iota
\,\cup \,\lsup{F}{\mkern-2mu S}_\tau \, \cup\,  \lsup{F}{\mkern-2mu r}$ is all of $S$.
Notice that if $F\{T_\iota\} = F\{T_\tau\}$ then  $\lsup{F}{\mkern-2mu S}_\iota  = \lsup{F}{\mkern-2mu S}_\tau$.

By applying the Bass-Serre Structure Theorem to
 the $F$-tree whose vertices are the components of $T - Fe$
and whose edge set is $Fe$, with $fe$ joining $f T_\iota$ to $f T_\tau$ $(f \in F)$, we find that
\begin{equation}\label{eq:preconn}
F = \begin{cases}
F_\iota {\ast} F_\tau &\text{if $F\{T_\iota\} \ne F\{T_\tau\}$,}\\
F_\iota {\ast} \gp{f}{\quad} &\text{if  $f \in F$ and $f T_\iota  =   T_\tau $.}
\end{cases}
\end{equation}
Hence,
\begin{equation}\label{eq:conn}
F/\normgen{S-\lsup{F}{\mkern-2mu r}} =
\begin{cases}
(F_\iota/\normgen{S_\iota}) {\ast} (F_\tau/\normgen{S_\tau}) &\text{if $F\{T_\iota\} \ne F\{T_\tau\}$,}\\
(F_{\iota}/\normgen{S_{\iota}}) {\ast}  \gp{f}{\quad}  &\text{if  $f \in F$ and $f T_\iota  =   T_\tau $.}
\end{cases}
\end{equation}

\medskip

\newpage

\noindent\textbf{Case 1.} Both
$F_\iota/\normgen{S_\iota}$ and $F_\tau/\normgen{S_\tau}$ have
infinite, cyclic quotients.

By~\eqref{eq:conn},
$F/\normgen{S-\lsup{F}{\mkern-2mu r}}$ has a rank-two, free-abelian quotient.
On incorporating $r$, we see that $F/\normgen{S}$ has an
infinite, cyclic quotient, and, hence, (i) and (ii)   hold in this case.

\medskip

\noindent\textbf{Case 2.} One of
$F_\iota/\normgen{S_\iota}$,  $F_\tau/\normgen{S_\tau}$  does not have an
infinite, cyclic quotient.

By reversing the orientation of every edge of $T$, if necessary,
we may assume  that $F_\iota/\normgen{S_\iota}$ does not have an infinite,
cyclic quotient.

By the induction hypothesis applied to $(F_\iota, T_\iota, S_\iota)$,
we see that $F_{\iota}/\normgen{S_{\iota}}$ is trivial,
and that $F_{\iota}$ acts freely on $T_{\iota}$, and that some
transversal for that $F_{\iota}$-action on $ S_{\iota} $ by conjugation
is a (finite) free-generating set of~$F_{\iota}$, and that,
for each $r_{\iota} \in S_{\iota}$, $\glue(F_\iota, \Eaxis(r_\iota)) =  \gen{r_\iota}$,
and, hence, \mbox{$\glue(F, \Eaxis(r_\iota)) =  \gen{r_\iota}$.}

By inverting every element of $S$, if necessary, we may assume the following.
\begin{equation}\label{eq:path}
\text{There exists some  segment of $\axis(r)$ of the form $e, p, re$.}
\end{equation}

Consider the case where $F\{T_\iota\} \ne F\{T_\tau\}$.  By~\eqref{eq:path}, we have a path
$\overline r p$ in $\axis(r)$ from \mbox{$\overline r \tau e \in \overline r T_\tau \ne T_\iota$} to $\iota e \in T_\iota$.
Now $\overline r p$ necessarily enters $T_\iota$ through an oriented edge of the form $g  e^{-1} $ for some
$g \in F_\iota$, and $g$ then lies in
 $\glue(F_\iota, \Eaxis(r)) -\gen{r}$. This proves the following.
\begin{equation}\label{eq:disconnected2}
\text{If   $\glue(F_\iota, \Eaxis(r)) \subseteq \gen{r}$  then
$F\{T_\iota\} = F\{T_\tau\}$.}
\end{equation}

Consider the case where $S_\iota$ is empty.
Here, $F_\iota\!=\!F_{\iota}/\normgen{S_{\iota}}\!=\!\{1\}$.
By~\eqref{eq:disconnected2},    $F\{T_\iota\} = F\{T_\tau\}$, and, then,
by~\eqref{eq:preconn}, there exists some $t\in F$ such that $F = \gp{t}{\quad}$.
Now  \mbox{$\{t, \overline t\} = \sqrt[F]{F} -\{1\} \supseteq S = \{r\}$.}
Hence, $F = \gp{r}{\quad}$, and, hence, $\glue(F, \Eaxis(r)) = \gen{r}.$
 This proves the following.
\begin{equation}\label{eq:empty}
\text{If  $\glue(F, \Eaxis(r)) \ne \gen{r}$ then $S_\iota$ is non-empty.}
\end{equation}

\medskip

\noindent\textbf{Case 2.1.}   $\glue(F, \Eaxis(r)) = \gen{r}$.

Here, by~\eqref{eq:disconnected2},   $F\{T_\iota\} = F\{T_\tau\}$.
Hence, $F(VT_\iota) = VT$ and $S = \lsup{F}{\mkern-2mu S}_\iota  \cup  \lsup{F}{\mkern-2mu r}$.
Consider any $v \in VT$.  We wish to show that $F_v = 1$,
and, we may assume that $v \in VT_\iota$.
Here, $F_v \le F_{\iota}$, and, since $F_\iota$ acts freely on $T_\iota$,  $F_v = 1$, as desired.
Thus, $F$ acts freely on $T$.
In~\eqref{eq:path}, the path $p$ from $\tau e$ to $r\iota e$ in $\axis(r)$ does not
meet $Fe$ since $\glue(F, \Eaxis(r)) = \gen{r}$, and,
hence, $p$ stays within~$T_\tau$, and, hence $r\iota e \in T_\tau$.
Thus, $r T_\iota = T_\tau$, and, by~\eqref{eq:preconn},   $F = F_\iota {\ast} \gp{r}{\quad}$.
Since  $F_{\iota}/\normgen{S_{\iota}}$ is trivial, we see that
$F/\normgen{S}$ is trivial, and, hence, $F/\normgen{S}$ is indicable.
Here all the required conclusions hold.

\medskip

\noindent \textbf{Case 2.2.} $\glue(F, \Eaxis(r)) \ne \gen{r}$.

By~\eqref{eq:empty},  $S_\iota$ is non-empty, and, hence,  there exists some
$e_\iota \in \bigcup\limits_{r_\iota  \in S_\iota} \Eaxis(r_\iota)$
such that\vspace{-2mm}  $$Fe_\iota
= \min\{Fe \mid e \in \bigcup\limits_{r_\iota  \in S_\iota} \Eaxis(r_\iota)\}.\vspace{-1mm}$$
 There then exists some
 $r_\iota \in S_\iota$ such that $e_\iota \in \Eaxis(r_\iota)$, and we
then know that  $\glue(F, \Eaxis(r_\iota))\!=\!\gen{r_\iota}$.
By~\eqref{eq:max}, $(r,e)\!=\!(r_\maxx,e_\maxx)$.
Using the definition of $(T,{<})$-stag\-gered modulo $F$, one can show that
$Fe_\iota$ does not meet the axis of any element of
$\lsup{F}{\mkern-2mu r}$, and, similarly,
$Fe_\iota$ does not meet the axis of any element of $\lsup{F}{\mkern-2mu S_\iota} - \lsup{F}{\mkern-2mu r}_\iota$.
It is clear that if $\lsup{F}{\mkern-2mu S_\tau} \ne \lsup{F}{\mkern-2mu S_\iota}$, then
$Fe_\iota$ does not meet the axis of any element of
 $\lsup{F}{\mkern-2mu S_\tau}$.
Hence, $Fe_\iota$ does not meet the axis of any element of
$S - \lsup{F}{\mkern-2mu r}_\iota$.
We now replace $(r,e)$ with $(r_\iota, e_\iota)$ in~\eqref{eq:disj},~\eqref{eq:max},
and we  then find that the same argument as before now terminates in Case 1 or Case 2.1.  Hence, all the desired
 conclusions hold.

\medskip

This completes the proof.
\end{proof}

\begin{Cor}\label{Cor:1} In
{\normalfont \mbox{Setting \ref{Sett:1}}},
suppose that $F$ is locally indicable  and
that $S$ is $T$-staggerable modulo $F$  and that \mbox{$\sqrt[F]{S}=S$.}
Then   $S \cup \Phi \subseteq F_{\nu}$
and
$F_{\nu}$ acts freely on~$T$  and    $\lsup{ F_{\nu} }{\mkern-2mu S}$ is a Whitehead subset of $F_{\nu}$.

\end{Cor}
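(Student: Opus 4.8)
The plan is to obtain the corollary by assembling Proposition~\ref{Prop:1} and Lemma~\ref{lem:bottom}, both applied to the triple $(F_\nu, T, S)$. First I would collect the structural facts about $F_\nu$ furnished by Proposition~\ref{Prop:1}(i): the containment $S \cup \Phi \subseteq F_\nu$ (which is already the first asserted conclusion), the identity $F_\nu = \gen{S^+ \cap F_\nu}$ (so that $F_\nu$, being generated by a subset of the finite set $S^+$, is finitely generable), and the fact that $F_\nu/\normgen{S}$ has no infinite, cyclic quotients. Since $F$ is locally indicable and local indicability is inherited by subgroups, $F_\nu$ is locally indicable. Restricting the action, $T$ is an $F_\nu$-tree with trivial edge stabilizers, and Proposition~\ref{Prop:1}(ii) shows that $S$ is $T$-staggerable modulo $F_\nu$.

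Before invoking Lemma~\ref{lem:bottom} I must verify its remaining hypothesis, $\sqrt[F_\nu]{S} = S$; this is the one point requiring care, since the standing hypothesis supplies roots in $F$ rather than in the smaller group $F_\nu$. Fix $r \in S$. As $S$ is $T$-staggerable modulo $F$, the element $r$ fixes no vertex of $T$, so $r \ne 1$; and since $F$ acts freely on $ET$, Definitions~\ref{defs:tree}(i) give that $\mathbf{C}_F(r)$ is infinite cyclic with $\gen{\sqrt[F]{r}} = \mathbf{C}_F(r)$. The hypothesis $\sqrt[F]{r} = r$ then forces $\mathbf{C}_F(r) = \gen{r}$, whence $\mathbf{C}_{F_\nu}(r) = \mathbf{C}_F(r) \cap F_\nu = \gen{r} \cap F_\nu = \gen{r}$ is infinite cyclic. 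By the Definitions, $r$ therefore has a unique root in $F_\nu$ with $\gen{\sqrt[F_\nu]{r}} = \gen{r}$, so $\sqrt[F_\nu]{r} = r$. As $r$ was arbitrary, $\sqrt[F_\nu]{S} = S$.

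With every hypothesis in place, I would apply Lemma~\ref{lem:bottom} to $(F_\nu, T, S)$. Part~(i) gives that $F_\nu/\normgen{S}$ is indicable; since this quotient has no infinite, cyclic quotient, indicability forces it to be trivial. Feeding the triviality of $F_\nu/\normgen{S}$ into part~(ii) of the lemma then delivers the two remaining conclusions at once: $F_\nu$ acts freely on $T$, and $\lsup{F_\nu}{S}$ is a Whitehead subset of $F_\nu$. The implication that being indicable together with having no infinite cyclic quotient forces triviality is the hinge that lets us pass from part~(i) to the stronger part~(ii); apart from the root-consistency check of the previous paragraph, the argument is a mechanical concatenation of the two preceding results.
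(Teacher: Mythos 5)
Your proof is correct and takes essentially the same route as the paper's: Proposition~\ref{Prop:1}(i) and (ii) supply the hypotheses of Lemma~\ref{lem:bottom} applied to the triple $(F_\nu, T, S)$, part~(i) of that lemma combines with the absence of infinite cyclic quotients to force $F_\nu/\normgen{S}$ to be trivial, and part~(ii) then yields the free action and the Whitehead subset. Your explicit check that $\sqrt[F_\nu]{S}=S$ (via $\mathbf{C}_{F_\nu}(r)=\mathbf{C}_F(r)\cap F_\nu=\gen{r}$) is a detail the paper leaves implicit behind its opening recollection that $S$ has unique roots in $F$, and it is a legitimate, correctly executed addition.
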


\begin{proof}
 Recall from Definitions~\ref{defs:tree}
that $S$ has unique roots in $F$.

By Proposition~\ref{Prop:1}(i), $F_{\nu}$ is finitely generated and $S\cup\Phi \subseteq F_\nu$  and
$F_{\nu}/\normgen{S}$ has no infinite, cyclic quotients.
By Proposition~\ref{Prop:1}(ii),
$S$ is $T$-staggerable modulo $F_{\nu}$.
By Lemma~\ref{lem:bottom}(i),
$F_{\nu}/\normgen{S}$ is indicable, and, hence, trivial.  Now the result holds by Lemma~\ref{lem:bottom}(ii).
\end{proof}

 The next result is repeated from Corollary~A.2.3 of~\cite{ADL};
Section~A.3 of \mbox{\cite{ADL}} describes how these results continue
work of Magnus, Brodski\u{\i}, Howie, Short and others.

\begin{Thm}\label{Thm:1} Let $F$ be a locally indicable group  and let $T$ be an
$F$-tree with trivial edge stabilizers and
let $R$ be a subset of $F$  that
  is $T$-staggerable modulo $F.$

Then the following hold.
\begin{enumerate}[{\normalfont (i).}]
\item {\normalfont (The Local-indicability Freiheitssatz)}  $\gen{\lsup{F}{\mkern-2mu R}}$ acts freely on $T$,
and, hence, $\gen{\lsup{F}{\mkern-2mu  R}}$ is a free group and the $F$-stabilizer of each vertex of $T$
embeds in $F/\normgen{R}$ under the natural map.
\item  $F/\normgen{\mkern-6mu\sqrt[F]{R}}$ is locally indicable.
\end{enumerate}
\end{Thm}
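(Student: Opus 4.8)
The plan is to reduce both parts to the finitely generated situation already handled by Lemma~\ref{lem:bottom} and Corollary~\ref{Cor:1}, feeding the relevant finite data through the chain of Setting~\ref{Sett:1}. First I would reduce to the case $\sqrt[F]R=R$: the set $\sqrt[F]R$ is again $T$-staggerable modulo $F$ (the axes are unchanged, since $\axis(r^n)=\axis(r)$), it satisfies $\sqrt[F]{(\sqrt[F]R)}=\sqrt[F]R$ and $\normgen{\sqrt[F]R}=\normgen{R}$, and $\gen{\lsup F R}\subseteq\gen{\lsup F{\sqrt[F]R}}$; so it is enough to prove (i) and (ii) when $R=\sqrt[F]R$. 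I would also note that the two assertions of (i) are equivalent to the single statement that $F_w\cap\gen{\lsup F R}=\{1\}$ for every vertex $w$: freeness of the action of the normal subgroup $\gen{\lsup F R}$ is the vanishing of these intersections, which is in turn the injectivity of $F_w\to F/\normgen{R}$; and that a group acting freely on a tree is free is standard Bass-Serre theory.

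For (i) I would argue by contradiction. Suppose $1\ne g\in F_w\cap\gen{\lsup F R}$ for some vertex $w$. Writing $g$ as a finite product of conjugates $\lsup{\phi_i}{r_i^{\pm1}}$ produces a finite set $S\coloneq\{r_i\}\subseteq R$ (which inherits $T$-staggerability modulo $F$ and satisfies $\sqrt[F]S=S$) and a finite set $\Phi\supseteq\{\phi_i\}$ with $g\in\gen{S\cup\Phi}$. Placing $(S,\Phi)$ into Setting~\ref{Sett:1} and running the chain, Proposition~\ref{Prop:1} gives (once the standing hypothesis is arranged, see below) a finitely generated $F_\nu\supseteq S\cup\Phi$ with $S$ still $T$-staggerable modulo $F_\nu$ and with $F_\nu/\normgen S$ admitting no infinite cyclic quotient; Corollary~\ref{Cor:1} then shows $F_\nu$ acts freely on $T$. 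Since $F_\nu$ is a subgroup containing $S\cup\Phi$, we get $g\in\gen{S\cup\Phi}\subseteq F_\nu$, so the nontrivial $g$ fixes $w$ inside a free action, a contradiction. This yields (i), whence $\gen{\lsup F R}$ is free and each $F_w$ embeds in $F/\normgen{R}$.

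For (ii) it suffices to prove that every finitely generated subgroup $\ol H=\gen{\,\ol\Phi\,}$ of $\ol F\coloneq F/\normgen{R}$ is indicable, where $\Phi\subseteq F$ is a finite lift. Running the chain for $(S,\Phi)$ with a finite $S\subseteq R$, Proposition~\ref{Prop:1}(i) produces $F_\nu/\normgen S$ with no infinite cyclic quotient, and Lemma~\ref{lem:bottom}(i) then forces $F_\nu/\normgen S$ to be \emph{trivial}; if $\Phi\subseteq F_\nu$ this gives $\Phi\subseteq\gen{\lsup{F_\nu}S}\subseteq\gen{\lsup F R}=\normgen{R}$, i.e.\ $\ol H=\{1\}$. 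Thus, whenever one can arrange the standing hypothesis of Setting~\ref{Sett:1} (no infinite cyclic quotient of $\gen{S\cup\Phi}/\normgen S$), the subgroup $\ol H$ is trivial, hence indicable; in the complementary case one must instead extract an infinite cyclic quotient of $\ol H$ from the directed system $\big(\gen{S\cup\Phi}/\normgen S\big)_{S}$ (over finite $S\subseteq R$, with surjective bonding maps), whose abelianized free ranks are non-increasing and stabilize. Either way $\ol H$ is indicable.

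The main obstacle, shared by both parts, is the standing hypothesis of Setting~\ref{Sett:1}: Corollary~\ref{Cor:1} needs $\gen{S\cup\Phi}/\normgen S$ to have no infinite cyclic quotient in order to conclude $\Phi\subseteq F_\nu$ (Proposition~\ref{Prop:1}(i)), and this fails for a general finite $\Phi$. My first device is to enlarge $S$ within $R$: by the rank-stabilization above, the hypothesis holds for all large finite $S$ exactly when the direct limit $\gen{R\cup\Phi}/\gen{\lsup{\gen{R\cup\Phi}}R}$ has no infinite cyclic quotient. The genuinely hard case is the opposite one, where such a quotient persists; there one must descend through it, precisely as the chain of Setting~\ref{Sett:1} does, while keeping the distinguished data ($g$ for (i), the generators of $\ol H$ for (ii)) under control and reconciling the descent with the kernel of the map from the direct limit onto $\ol H$. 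What makes the bookkeeping possible is geometric: because $g\in\gen{\lsup F S}$ fixes the vertex $w$, the conjugators occurring in $g$ are confined to the finite set $\glue(F,EY)\subseteq S^+$ used to build the chain. Reconciling this geometric confinement with the passage to kernels of maps onto $\Z$ is the heart of the Howie-inductive argument, and is where Bass-Serre theory replaces the delicate bookkeeping of Magnus induction.
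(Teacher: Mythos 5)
Your architecture is the paper's (reduce to $\sqrt[F]{R}=R$, feed finite data through Setting~\ref{Sett:1}, apply Proposition~\ref{Prop:1} and Corollary~\ref{Cor:1}), but you stop short precisely at the point you yourself flag: arranging the standing hypothesis that $\gen{S\cup\Phi}/\normgen{S}$ has no infinite, cyclic quotients. For (i) this obstacle is self-inflicted, and the paper's device dissolves it: instead of keeping the conjugators $\phi_i$ in $\Phi$ and the relators $r_i$ in $S\subseteq R$, take $S$ to be an arbitrary finite subset of $\lsup{F}{R}$ itself --- absorbing each conjugator into the relator --- and take $\Phi=\emptyset$. The set $\lsup{F}{R}$ is still $T$-staggerable modulo $F$ (the staggering conditions are stated in terms of $F$-orbits of axes, hence are conjugation-invariant) and satisfies $\sqrt[F]{S}=S$, while $\gen{S\cup\Phi}/\normgen{S}=\gen{S}/\normgen{S}$ is trivial, so the standing hypothesis holds automatically. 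Corollary~\ref{Cor:1} then gives $\gen{S}\le F_\nu$ with $F_\nu$ acting freely on $T$, and letting $S$ range over all finite subsets of $\lsup{F}{R}$ shows directly that $\gen{\lsup{F}{R}}$ acts freely --- no contradiction argument and no ``geometric confinement'' of conjugators is needed. Your closing appeal to ``the heart of the Howie-inductive argument'' is not a proof; that heart is exactly the missing step.

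For (ii) your dichotomy is the reverse of the correct one, and its second horn fails. The paper first assumes, as it may, that $H$ has no infinite, cyclic quotient (otherwise $H$ is indicable and we are done); then $H^{\text{ab}}$ has finite exponent $d$, so $\Phi^d\subseteq\gen{\Phi}'\gen{\lsup{F}{R}}$, and by finiteness there is a finite subset $S$ of $\lsup{F}{R}$ (again conjugates, not a subset of $R$) with $\Phi^d\subseteq\gen{\Phi}'\gen{S}$; hence $(\gen{\Phi\cup S}/\normgen{S})^{\text{ab}}$ has exponent at most $d$, the standing hypothesis of Setting~\ref{Sett:1} holds, and Corollary~\ref{Cor:1} yields $\Phi\subseteq F_\nu=\gen{\lsup{F_\nu}{S}}\le\gen{\lsup{F}{R}}$, so $H=\{1\}$. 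Your fallback in the complementary case --- extracting an infinite, cyclic quotient of $\ol H$ from the directed system $\bigl(\gen{S\cup\Phi}/\normgen{S}\bigr)_S$ --- does not work as stated: $\ol H$ is a further (generally proper) quotient of the direct limit $\gen{R\cup\Phi}/\gen{\lsup{\gen{R\cup\Phi}}{R}}$, because conjugates of elements of $R$ by elements of $F$ lying outside $\gen{R\cup\Phi}$ also die in $\ol H$, and an infinite, cyclic quotient of the limit (or of the finite stages) need not descend to a quotient of $\ol H$. So both halves of your proposal have a genuine gap at the standing hypothesis, and the two missing ideas are the same pair the paper supplies: choose $S$ inside $\lsup{F}{R}$ rather than inside $R$, and, for (ii), the exponent-$d$ reduction.
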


\begin{proof} Since $\gen{\lsup{F}{\mkern-2mu R}} \le \gen{\lsup{F}{\mkern-2mu (\sqrt[F]{R})}}$, we may
replace $R$ with $\sqrt[F]{R}$ and then we have
$\sqrt[F]{R}=R$.

\noindent\mbox{\phantom{...ii}(i).}   Let $S$  be an arbitrary finite subset  of $\lsup{F}{\mkern-2mu R}$ and
let $\Phi$   be the empty set.  We  may then assume that we are in Setting~\ref{Sett:1}.
 By Corollary~\ref{Cor:1}, $\gen{S} \le F_{\nu}$  and   $F_{\nu}$ acts freely on~$T$.
Hence,
$\gen{S}$ acts freely on~$T$, and, since $S$ is an arbitrary finite subset  of $\lsup{F}{\mkern-2mu R}$,
we see that $\gen{\lsup{F}{\mkern-2mu R}}$ acts freely on $T$, that is,
the $F$-stabilizer of each vertex of $T$
embeds in $F/\normgen{R}$ under the natural map.  By Reidemeister's Theorem, or by Bass-Serre Theory,
$\gen{\lsup{F}{\mkern-2mu R}}$ is a free group.

\noindent\mbox{\phantom{...i}(ii).}   Let $H$ be an arbitrary
finitely generable subgroup of $F/\gen{\lsup{F}{\mkern-2mu R}}$.
It suffices to show that $H$ is indicable.
We may assume that $H$
has no infinite, cyclic quotients.  Then $H^\text{ab}$ is finite.
Let $d$ denote the exponent of $H^\text{ab}$.
There exists some finite subset $\Phi$ of $F$ such that
\mbox{$H=(\gen{\Phi}\,\gen{\lsup{F}{\mkern-2mu R}})/\gen{\lsup{F}{\mkern-2mu R}}$}.
Now \mbox{$H'=(\gen{\Phi}'\,\gen{\lsup{F}{\mkern-2mu R}})/\gen{\lsup{F}{\mkern-2mu R}}$}
and we see that
 $\Phi^d  \subseteq \gen{\Phi}'\,\gen{\lsup{F}{\mkern-2mu R}}$.  Hence,
there exists some finite subset
 $S$ of $\lsup{F}{\mkern-2mu R}$ such that  the finite set $\Phi^d$ lies in $\gen{\Phi}'\gen{S}$.
Then, the abelian group \mbox{$(\gen{\Phi \cup S}/\normgen{S})^{\text{ab}}$} has exponent at most~$d$,
and, hence, \mbox{$\gen{\Phi \cup S}/\normgen{S}$} has no infinite, cyclic quotients.
We  may then assume that we are in Setting~\ref{Sett:1}. By Corollary~\ref{Cor:1},
\mbox{$\Phi \subseteq  F_\nu   = \gen{\lsup{ F_{\nu} }{\mkern-2mu S}}
\le  \gen{\lsup{F}{\mkern-2mu R}}$,} and, hence, $H$ is trivial, and hence, $H$ is indicable.
\end{proof}

\begin{Cor}[Brodski\u{\i}-Howie-Short]\label{Cor:Howie}
Suppose that $A$ and $B$ are locally indicable groups and that
 $r$ is an element of $A{\ast}B$.  If no $A{\ast}B$-conjugate of $r$ lies in $A$, then
  the natural map  embeds $A $   in $(A{\ast}B)/\normgen{r}$. \hfill\qed
\end{Cor}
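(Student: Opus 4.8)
The plan is to derive the corollary from the Local-indicability Freiheitssatz, Theorem~\ref{Thm:1}(i), applied to the Bass-Serre tree of the free product, together with an elementary retraction argument to cover a degenerate case. Set $F = A{\ast}B$; since $A$ and $B$ are locally indicable, so is $F$ (a standard consequence of the Kurosh subgroup theorem: every non-trivial finitely generable subgroup of $A{\ast}B$ splits as a free product in which at least one factor is a non-trivial finitely generated free group or a conjugate of a non-trivial subgroup of $A$ or $B$, and each such factor is indicable, whence the whole subgroup is). Let $T$ be the $F$-tree of Example~\ref{Ex:freeproduct}, with vertex set $(F/A)\vee(F/B)$ and edge set $F$. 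Then $T$ has trivial edge stabilizers, the $F$-stabilizer of the vertex $A\in F/A$ is exactly $A$, and the elements of $F$ that fix some vertex are precisely the $F$-conjugates of the elements of $A{\cup}B$.

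First I would dispose of the case in which $r$ fixes a vertex of $T$. By hypothesis no $F$-conjugate of $r$ lies in $A$, so such an $r$ must be $F$-conjugate into $B$; write $r=\lsup{g}{b}$ with $g\in F$ and $b\in B$. Here I would use the retraction $\rho\colon F\to A$ that is the identity on $A$ and kills $B$: since $\rho(r)=\lsup{\rho(g)}{\rho(b)}=\lsup{\rho(g)}{1}=1$, the normal closure $\gen{\lsup{F}{r}}$ lies in $\ker\rho$, so $\rho$ factors through $F/\normgen{r}$ as a map $\overline\rho$. The composite $A\into F\onto F/\normgen{r}\xrightarrow{\overline\rho} A$ equals $\rho|_A=\mathrm{id}_A$, so the natural map $A\to F/\normgen{r}$ is injective. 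This step is purely algebraic and uses no local indicability.

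In the main case $r$ fixes no vertex of $T$, and then, by the singleton criterion recorded in Definitions~\ref{defs:tree}, the set $R\coloneq\{r\}$ is $T$-staggerable modulo $F$. Theorem~\ref{Thm:1}(i) then applies and shows that $\gen{\lsup{F}{r}}$ acts freely on $T$. Consequently the stabilizer $A$ of the vertex $A$ meets $\gen{\lsup{F}{r}}$ trivially, so the restriction of the natural map $F\onto F/\gen{\lsup{F}{r}}=F/\normgen{r}$ to this stabilizer, namely $A\into F\onto F/\normgen{r}$, is injective; that is, $A$ embeds in $F/\normgen{r}$.

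The substantive input is Theorem~\ref{Thm:1}(i), which is already available, so the remaining work is light. The one point that needs care—and which I expect to be the only real obstacle in organizing the argument—is that the hypothesis forbids only $F$-conjugacy of $r$ into $A$, whereas $T$-staggerability of $\{r\}$ requires $r$ to fix \emph{no} vertex, i.e.\ to be conjugate into neither $A$ nor $B$. Thus the branch in which $r$ is conjugate into $B$ lies outside the scope of the Freiheitssatz and must be handled by the retraction argument above; once this case split is made, both branches are routine.
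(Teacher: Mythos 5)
Your proof is correct and takes essentially the route the paper intends: Corollary~\ref{Cor:Howie} carries no separate proof precisely because it is Theorem~\ref{Thm:1}(i) applied to the vertex stabilizer $A$ in the tree of Example~\ref{Ex:freeproduct}, which is exactly your main branch. Your two supplementary points---local indicability of $A{\ast}B$ via the Kurosh subgroup theorem, and the retraction $F\to A$ handling the degenerate branch where $r$ is $F$-conjugate into $B$ (so that $\{r\}$ fails to be $T$-staggerable)---correctly fill in details the paper leaves implicit, and your closing observation that the hypothesis permits this degenerate branch is a legitimate gap-closing remark rather than a deviation from the paper's argument.
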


\begin{Cor}[Magnus' Freiheitssatz]
Suppose that $A$ and $B$ are free groups and that
 $r$ is an element of $A{\ast}B$.  If no $A{\ast}B$-conjugate of $r$ lies in $A$, then
  the natural map  embeds $A $   in $(A{\ast}B)/\normgen{r}$. \hfill\qed
\end{Cor}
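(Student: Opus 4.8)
The plan is to recognize Magnus' Freiheitssatz as the special case of the Brodski\u{\i}-Howie-Short Corollary in which the free\d1product factors $A$ and $B$ happen to be free groups. All of the genuine content has already been established in that more general corollary, so the argument reduces to checking that the hypotheses transfer.

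First I would invoke the observation recorded in the opening Notation of the article that every free group is locally indicable. Consequently, if $A$ and $B$ are free groups, then they are in particular locally indicable groups, and the standing hypothesis ``no $A{\ast}B$-conjugate of $r$ lies in $A$'' is literally the same in both statements. Thus every hypothesis of the Brodski\u{\i}-Howie-Short Corollary is satisfied.

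Having matched the hypotheses, I would simply apply the Brodski\u{\i}-Howie-Short Corollary to conclude that the natural map embeds $A$ in $(A{\ast}B)/\normgen{r}$, which is precisely the assertion of Magnus' Freiheitssatz. There is no real obstacle here; the only point deserving mention is the (standard) fact that free groups are locally indicable, so that Magnus' classical result is subsumed by its local\d1indicability generalization.
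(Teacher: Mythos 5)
Your proposal is correct and coincides with the paper's (implicit) argument: the authors state Magnus' Freiheitssatz immediately after the Brodski\u{\i}-Howie-Short Corollary precisely because free groups are locally indicable (as noted in the paper's opening Notation), so the statement is the special case of that corollary. Nothing further is needed.
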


\section{Moving the new information back up the chain}\label{sec:5}

In this section, we recover the Local-indicability Cohen-Lyndon Theorem.

\begin{Lem}  Let $Y$ be a non-empty subset of $F$.

Then $Y$ is a Whitehead  subset of $F$ if and only if all of the following hold:
$1 \not\in Y$; for each $y \in Y$, $\mathbf{C}_F(y) = \gen{y}$;  $Y$ generates $F$;
$Y$ is closed under the $F$-conjugation action; and, $Y$ with the $F$-conjugation
action  is the vertex set of an $F$-tree with trivial edge stabilizers.
\end{Lem}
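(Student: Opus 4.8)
The plan is to prove the two implications separately, using the Bass-Serre Structure Theorem in both directions: in the forward direction to \emph{build} the required tree out of a free factorization of $F$, and in the reverse direction to \emph{read off} such a factorization from the given tree.

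First I would treat the forward implication. Suppose $Y$ is a Whitehead subset of $F$, so there is a free-generating set $X$ of $F$ with $\lsup{F}{\mkern-2.6mu X}=Y$; thus $F=\gp{X}{\quad}$ is free on $X$ and $Y$ is the set of all $F$-conjugates of elements of $X$. Three of the five conditions are then immediate: $Y$ is closed under conjugation by its very description; $X\subseteq Y$ generates $F$, so $Y$ generates $F$; and no conjugate of a non-trivial free generator is trivial, so $1\notin Y$. For the centralizer condition I would use the standard fact that in a free group the centralizer of a non-trivial element is the infinite cyclic group generated by its root, together with the fact that a member of a free-generating set is not a proper power; hence $\mathbf{C}_F(x)=\gen{x}$ for $x\in X$, and conjugating gives $\mathbf{C}_F(\lsup{f}{x})=\gen{\lsup{f}{x}}$ for every $y=\lsup{f}{x}\in Y$.

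The substantive part of the forward direction is producing the tree. Here I would realize $F$ as the free product $\ast_{x\in X}\gen{x}$ and present it by a graph of groups whose underlying graph $\Gamma$ is itself a tree with vertex set $X$ (for instance the star centred at a chosen $x_0$), with vertex group $\gen{x}$ at $x$ and all edge groups trivial. Because $\Gamma$ is a tree and all edge groups are trivial, its fundamental group is exactly the free product $\ast_{x\in X}\gen{x}=F$, with no free part; the associated Bass-Serre tree $T'$ then has trivial edge stabilizers, vertex set $\bigsqcup_{x\in X}F/\gen{x}$, and edge set $\bigsqcup_{e}F$. The $F$-map $f\gen{x}\mapsto\lsup{f}{x}$ is a well-defined $F$-equivariant bijection from this vertex set onto $\lsup{F}{\mkern-2.6mu X}=Y$ (well-defined and injective because the conjugation stabilizer of $x$ is $\mathbf{C}_F(x)=\gen{x}$). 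Transporting the tree structure of $T'$ across this bijection exhibits $Y$, with the conjugation action, as the vertex set of an $F$-tree with trivial edge stabilizers.

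For the reverse implication I would run the same machine backwards. Assume the five conditions and let $T'$ be the $F$-tree of the last one, so $VT'=Y$ with the conjugation action and $ET'$ is $F$-free. Choose a transversal $X$ for the $F$-conjugation action on $Y$; then $\lsup{F}{\mkern-2.6mu X}=Y$ since $Y$ is conjugation-closed, and the $F$-stabilizer of the vertex $x\in X$ is $\mathbf{C}_F(x)=\gen{x}$. The Bass-Serre Structure Theorem applied to $T'$ gives $F=\bigl(\ast_{x\in X}\gen{x}\bigr)\ast F_0$, where $F_0$ is a free group of rank the first Betti number of the quotient graph $F\leftmod T'$. Now $\gen{Y}=\gen{\lsup{F}{\mkern-2.6mu X}}=\normgen{X}$, and by hypothesis $\gen{Y}=F$, so $F=\normgen{X}$; killing the normal closure of all the factors collapses the free product onto $F_0$, whence $F_0\cong F/\normgen{X}=1$. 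Thus $F=\ast_{x\in X}\gen{x}$, each $x$ has infinite order, so $F$ is free on $X$; as $\lsup{F}{\mkern-2.6mu X}=Y$, this shows $Y$ is a Whitehead subset of $F$.

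I expect the forward-direction tree construction to be the main obstacle: one must choose the graph of groups carefully so that the resulting Bass-Serre tree has exactly the vertex orbits $F/\gen{x}$ and no spurious orbit (the naive presentation of a free product through a single central vertex would introduce an extra orbit of trivially-stabilized vertices, which could not match $Y$), and one must verify the $F$-set identification of the vertices with $Y$. In the reverse direction the only delicate point is the passage from $F=\ast_{x}\gen{x}$ to ``$F$ is free on $X$'', that is, that each cyclic factor $\gen{x}$ is infinite rather than finite; this is exactly where the torsion-free nature of the groups under consideration must be invoked, and I would make sure that point is supplied by the hypotheses in force.
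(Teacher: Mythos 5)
Your proof is, in both directions, essentially the paper's own. Forward, the paper likewise expresses $F$ as the fundamental group of a tree of groups with trivial edge groups and family of vertex groups $(\gen{x} \mid x \in X)$, and identifies the vertex set of the Bass-Serre tree with $\bigvee_{x\in X} F/\gen{x} \simeq \lsup{F}{\mkern-2.6mu X} = Y$ (your remark about avoiding a spurious central-vertex orbit is exactly why one takes a tree of groups on $X$ itself). Backward, the paper likewise invokes the Bass-Serre Structure Theorem, uses the centralizer condition to write the vertex groups as $(\gen{x} \mid x \in X)$ for a transversal $X$ of the conjugation action on $Y$, and uses the fact that $F$ is generated by the $F$-conjugates of the vertex groups to conclude that the quotient graph equals its maximal subtree; your computation $F_0 \simeq F/\gen{\lsup{F}{\mkern-2.6mu X}} = 1$ is just a slightly more explicit rendering of that same step.

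The one point you flagged but left open---that each cyclic factor $\gen{x}$ is infinite---deserves a direct answer: the hypotheses in force do \emph{not} supply it, and the paper's own proof jumps over it with the words ``Since $1 \not\in X$, $X$ is a free-generating set of $F$''. Indeed, take $F$ to be the infinite dihedral group $\gp{a,b}{a^2=b^2=1}$ and $Y$ the set of all $F$-conjugates of $a$ and $b$ (the reflections): then $1\notin Y$, $Y$ generates $F$, $Y$ is conjugation-closed, $\mathbf{C}_F(y)=\gen{y}$ for every $y \in Y$, and the Bass-Serre tree of $\Z/2\,{\ast}\,\Z/2$ realizes $Y$, with the conjugation action, as the vertex set of an $F$-tree with trivial (indeed free) edge set, via $f\gen{a}\mapsto \lsup{f}{a}$ and $f\gen{b}\mapsto \lsup{f}{b}$; yet $F$ is not free, so $Y$ is not a Whitehead subset. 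So the reverse implication genuinely requires the additional hypothesis that each $y \in Y$ has infinite order (equivalently, that $\mathbf{C}_F(y)$ is infinite cyclic), which the forward implication delivers automatically since members of a free-generating set are of infinite order. This is harmless for the ways the lemma is used downstream---wherever the criterion is invoked, the ambient group (such as $\gen{\lsup{F}{\mkern-2mu R}}$, or a subgroup of it) is already known to be free, hence torsion-free---but your instinct was right, and the honest move is to add the infiniteness hypothesis (or derive it from the ambient context) explicitly rather than hope it is supplied by the stated conditions; as the counterexample shows, no such derivation exists from the five conditions alone.
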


\begin{proof}  Suppose that $Y$ is  a Whitehead subset of  $F$, and let $X$ be
a free-generating set of $F$ such that $Y = \lsup{F}{X}$.
Then $1 \not\in Y$,  $Y$ generates $F$, $Y$ is closed under the $F$-conjugation action,
 and each $y\in Y$ generates a non-trivial free factor of $F$ and, hence, $\mathbf{C}_F(y) = \gen{y}$.
We can  express $F$ as the fundamental group of a tree of groups
in which the edge groups are trivial and the family of vertex groups is
$(\gen{x} \mid  x \in X )$.  Let   $T$ denote the corresponding
Bass-Serre tree.
Thus
$VT  = \mathop{\textstyle\bigvee}\limits_{x\in X} F/ \gen{x}
\simeq \mathop{\textstyle\bigvee}\limits_{x\in X} \lsup{F}{\mkern-2mu x} = \lsup{F}{\mkern-2mu X} =Y,$\vspace{-2mm}
and $ET$ is some free $F$-set.

Conversely, suppose that
$1 \not\in Y$ and that $Y$ generates $F$ and that, for each $y \in Y$, $\mathbf{C}_F(y) = \gen{y}$  and
that $Y$ is closed under the $F$-conjugation action and that $Y$ with the $F$-conjugation
action is the vertex set of an $F$-tree $T$ with trivial edge stabilizers.
By the Bass-Serre Structure Theorem,
there exists some graph of groups
$(\mathcal{F}, \overline T)$
and some maximal subtree $ \overline T_0$ of~$ \overline T$  such that
$F$ is the fundamental group of $(\mathcal{F},  \overline T, \overline T_0)$.
By the centralizer condition, the family of vertex groups can be expressed as
$(\gen{x} \mid x \in X)$ for some transversal~$X$  for the  $F$-conjugation action on $ Y.$
Since $F$  is then generated by the
$F$-conjugates of the vertex groups,
it follows that $ \overline T =  \overline T_{0}$.  Hence,  $F$
is the free product of the family of vertex groups.  Since $1 \not\in X$, $X$ is a
free-generating set of $F$ and $Y = \lsup{F}{\mkern-2mu X}$, as desired.
\end{proof}

\begin{Cor}\label{Cor:vertex}  Let $R$ be a non-empty subset of $F$.

 Then $R$ is   Cohen-Lyndon aspherical in  $F$  if and only if all of the following hold:
 $\,1 \not\in R$;  no two distinct elements of $R$ are $F$-conjugate;   for each $r \in R$,
\mbox{$\mathbf{C}_{\gen{\lsup{F}{\mkern -2mu R}}}(r) = \gen{r}$;} and,
$\lsup{F}{\mkern -2mu R}$ with the $\gen{\lsup{F}{\mkern -2mu R}}$-conjugation action
 is the vertex set of an $\gen{\lsup{F}{\mkern -2mu R}}$-tree with trivial edge
stabilizers. \hfill\qed
\end{Cor}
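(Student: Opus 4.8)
The plan is to deduce the statement directly from the preceding Lemma, applied to the subgroup $H \coloneq \gen{\lsup{F}{\mkern-2mu R}}$ and the set $Y \coloneq \lsup{F}{\mkern-2mu R}$ in the roles of $F$ and $Y$. By the definition of Cohen-Lyndon asphericity of a subset, $R$ is Cohen-Lyndon aspherical in $F$ exactly when no two distinct elements of $R$ are $F$-conjugate and $Y$ is a Whitehead subset of $H$. Since $R \ne \emptyset$ we have $Y \ne \emptyset$, so the Lemma is applicable to $(H,Y)$ and characterizes the second of these two conditions by a list of five clauses. First I would observe that two of those five clauses are automatic in the present setting: $Y$ generates $H$ by the very definition of $H$, and $Y = \lsup{F}{\mkern-2mu R}$ is closed under the $F$-conjugation action, hence \emph{a fortiori} under the $H$-conjugation action. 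These two clauses may therefore be struck from the list.

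Next I would match the three surviving clauses of the Lemma against the clauses in the statement. The clause $1 \notin Y$ is equivalent to $1 \notin R$, because the only $F$-conjugate of $1$ is $1$, so $1 \in \lsup{F}{\mkern-2mu R}$ if and only if $1 \in R$. The tree clause transfers verbatim: ``$Y$ with the $H$-conjugation action is the vertex set of an $H$-tree with trivial edge stabilizers'' is, with $H = \gen{\lsup{F}{\mkern-2mu R}}$, precisely the last clause of the corollary. Finally, the one clause in the definition of Cohen-Lyndon asphericity that has no counterpart among the Lemma's conditions, namely that no two distinct elements of $R$ are $F$-conjugate, reappears unchanged as a hypothesis in the corollary.

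The only point requiring real argument is the translation of the centralizer condition: the Lemma demands $\mathbf{C}_H(y) = \gen{y}$ for \emph{every} $y \in Y$, whereas the corollary demands this only for $r \in R$. The crux is to see that these two requirements coincide, and here I would use that $H = \gen{\lsup{F}{\mkern-2mu R}}$ is the normal closure of $R$ in $F$, so $H \unlhd F$. Writing a typical element of $Y$ as $y = \lsup{f}{r}$ with $f \in F$ and $r \in R$, conjugation of centralizers gives $\mathbf{C}_F(\lsup{f}{r}) = \lsup{f}{\mathbf{C}_F(r)}$, and normality of $H$ yields $\mathbf{C}_H(\lsup{f}{r}) = \lsup{f}{\mathbf{C}_F(r)} \cap H = \lsup{f}{\mathbf{C}_F(r)} \cap \lsup{f}{H} = \lsup{f}{(\mathbf{C}_F(r) \cap H)} = \lsup{f}{\mathbf{C}_H(r)}$, while $\gen{\lsup{f}{r}} = \lsup{f}{\gen{r}}$. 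Since conjugation by $f$ is a bijection, $\mathbf{C}_H(y) = \gen{y}$ holds if and only if $\mathbf{C}_H(r) = \gen{r}$, so quantifying over all $y \in Y$ is equivalent to quantifying over all $r \in R$. Assembling these equivalences turns the Lemma's characterization of ``$Y$ is a Whitehead subset of $H$'' into the asserted characterization of Cohen-Lyndon asphericity, which completes the proof.
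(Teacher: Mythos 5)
Your proposal is correct and coincides with the paper's intended argument: the paper states this corollary with no written proof, treating it as an immediate application of the preceding Lemma with $\gen{\lsup{F}{\mkern-2mu R}}$ in the role of the group and $\lsup{F}{\mkern-2mu R}$ in the role of $Y$, exactly as you do. Your explicit verification that the centralizer condition need only be checked on $R$ rather than on all of $\lsup{F}{\mkern-2mu R}$ (using normality of $\gen{\lsup{F}{\mkern-2mu R}}$ in $F$ and $\mathbf{C}_F(\lsup{f}{r}) = \lsup{f}{\mathbf{C}_F(r)}$) correctly supplies the one detail the paper leaves tacit.
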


We now give a  generalization of   results of Cohen and Lyndon~\cite[Lemma~2.1]{CohenLyndon}
and Karrass and Solitar \cite[Theorem~1]{KarrassSolitar} that is similar to
a result of Chiswell, Collins and Huebschmann~\cite[Theorem~4.6]{CCH}.

\begin{Thm}\label{Thm:KS1} Let
$(\mathcal{F}, Y)$  be   a graph of groups  with family of groups $(\mathcal{F}(y) \mid y \in Y)$
and family of edge maps $(\overline t_e \colon   \mathcal{F}(e) \to \mathcal{F}(\tau_Y e) \mid e \in EY)$,
let $Y_0$ be a maximal subtree  of~$Y$, and let $F$ be the  fundamental group of
$(\mathcal{F}, Y,Y_0)$.

Let $R \coloneq  (r_v \mid v \in VY)$ be a family of elements of $F$ with the property that,
for each $v \in VY,$  all of the following hold: $r_v \in \mathcal{F}(v)$;  for each $e \in \iota_Y^{-1} (\{v\})$,
\mbox{$\gen{\lsup{\mathcal{F}(v)}{r_v}} \cap \,\,\mathcal{F}(e) = \{1\}$}; and,  for each $e \in \tau_Y^{-1} (\{v\})$,
\mbox{$\gen{\lsup{\mathcal{F}(v)}{r_v}} \cap \,\,\lsup{\overline t_e}{\mathcal{F}(e)} = \{1\}$.}

  Then, for each $v \in VY$,
 $\gen{\lsup{F}{R}} \cap \,\, \mathcal{F}(v) = \gen{\lsup{\mathcal{F}(v)}{r_v}}$.

Let $U \coloneq \{v \in VY \mid r_v \ne 1\}$.  Then $\{r_u \mid u \in U\}$ is Cohen-Lyndon aspherical in $F$  if and only if, for each $u \in U$, $r_u$ is
Cohen-Lyndon aspherical in $\mathcal{F}(u)$.
\end{Thm}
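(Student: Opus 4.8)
The plan is to treat the two assertions separately using the action of $F$ on the Bass--Serre tree $T$ of $(\mathcal{F},Y,Y_0)$; write $N\coloneq\gen{\lsup{F}{R}}$ and, for $v\in VY$, $N_v\coloneq\gen{\lsup{\mathcal{F}(v)}{r_v}}$. For the first assertion the inclusion $N_v\subseteq N\cap\mathcal{F}(v)$ is clear, and for the reverse inclusion I would build a quotient graph of groups. The three hypotheses on $r_v$ say exactly that, for every edge $e$, the edge-group images $\mathcal{F}(e)\le\mathcal{F}(\iota_Y e)$ and $\lsup{\overline t_e}{\mathcal{F}(e)}\le\mathcal{F}(\tau_Y e)$ meet the relevant $N_{\iota_Y e}$, $N_{\tau_Y e}$ trivially, hence inject into the quotient vertex groups $\mathcal{F}(v)/N_v$; so there is a well-defined graph of groups $(\overline{\mathcal{F}},Y,Y_0)$ with the same underlying graph and the same edge groups, with vertex groups $\overline{\mathcal{F}}(v)\coloneq\mathcal{F}(v)/N_v$ and induced edge maps. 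Let $\overline F$ be its fundamental group. The maps $\mathcal{F}(v)\onto\overline{\mathcal{F}}(v)$ induce a surjection $F\onto\overline F$ whose kernel is the normal closure of $\bigcup_v N_v$, which is exactly $N$. By Bass--Serre theory the vertex group $\overline{\mathcal{F}}(v)$ embeds in $\overline F$, so the composite $\mathcal{F}(v)\onto\overline{\mathcal{F}}(v)\into\overline F$ has kernel $N_v$; as this composite is the restriction of $F\onto\overline F$, its kernel is also $\mathcal{F}(v)\cap N$, whence $N\cap\mathcal{F}(v)=N_v$.

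For the second assertion I would feed the first assertion into the $N$-action on $T$. Since $N\trianglelefteq F$ and the $F$-stabilizers of vertices are the conjugates of the $\mathcal{F}(v)$, the $N$-stabilizer of the vertex $f\mathcal{F}(v)$ is $f N_v f^{-1}$, and that of the edge $f\mathcal{F}(e)$ lies in $f(N_{\iota_Y e}\cap\mathcal{F}(e))f^{-1}=1$; so $N$ acts on $T$ with trivial edge stabilizers. Since $N\backslash T$ is $\overline F$-isomorphic to the Bass--Serre tree of $(\overline{\mathcal{F}},Y,Y_0)$, it is a tree, and therefore $N$ is the free product $N=\ast_i N_{w_i}$ of its non-trivial vertex stabilizers, indexed by $N$-orbit representatives $w_i$ of vertices with non-trivial stabilizer, each $N_{w_i}$ an $N$-conjugate of some $N_u$ with $u\in U$, and with no free factor. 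The hypotheses force each $r_u$ ($u\in U$) to lie in no edge group, hence to fix a unique vertex of $T$; consequently $\mathbf{C}_F(r_u)\le\mathcal{F}(u)$ and no two of the $r_u$ are $F$-conjugate.

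It then remains to check the criterion of Corollary~\ref{Cor:vertex} for $R'\coloneq\{r_u\mid u\in U\}$ in $F$ against the same criterion for each $\{r_u\}$ in $\mathcal{F}(u)$. The conditions $1\notin R'$ and ``no two distinct elements are $F$-conjugate'' hold by the previous paragraph, and the centralizer condition transfers both ways from $\mathbf{C}_N(r_u)=N\cap\mathbf{C}_F(r_u)=N_u\cap\mathbf{C}_F(r_u)=\mathbf{C}_{N_u}(r_u)$. By the Lemma preceding Corollary~\ref{Cor:vertex} the remaining condition amounts to: $\lsup{F}{R'}$ is a Whitehead subset of $N$ if and only if each $\lsup{\mathcal{F}(u)}{r_u}$ is a Whitehead subset of $N_u$. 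Setting $Q_w\coloneq N_w\cap\lsup{F}{r_u}$ for a vertex $w$ over $u$, I would record that $\lsup{F}{r_u}=\bigsqcup_{w\text{ over }u}Q_w$, that $Q_{nw}=\lsup{n}{Q_w}$, that $Q_w$ transports to $\lsup{\mathcal{F}(u)}{r_u}$ under the conjugator identifying $N_w$ with $N_u$, and that elements of $Q_w$ are $N$-conjugate precisely when they are $N_w$-conjugate. Choosing a transversal $X$ of $\lsup{F}{R'}$ under $N$-conjugation whose members lie in the representative factors, so that $X=\bigsqcup_i(X\cap N_{w_i})$ with $X\cap N_{w_i}$ a transversal of $Q_{w_i}$ under $N_{w_i}$-conjugation, the Lemma shows $\lsup{F}{R'}$ is a Whitehead subset of $N$ exactly when $X$ is a free basis of $N$, and $\lsup{\mathcal{F}(u)}{r_u}$ is a Whitehead subset of $N_u$ exactly when $X\cap N_{w_i}$ is a free basis of $N_{w_i}$. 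The equivalence of these then follows from $N=\ast_i N_{w_i}$: if each factor is free on $X\cap N_{w_i}$ then $N$ is free on $X$; conversely, if $N$ is free on $X$ then applying the retraction $\ast_i N_{w_i}\onto N_{w_j}$ to $\gen{X}=\ast_i\gen{X\cap N_{w_i}}$ gives $\gen{X\cap N_{w_j}}=N_{w_j}$, so each factor is free on its part of $X$.

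I expect the main obstacle to be the bookkeeping in this last step: verifying that a single $F$-conjugacy class $\lsup{F}{r_u}$ breaks up, across the $N$-orbits of vertices of $T$ lying over $u$, into the $N_w$-conjugacy classes of the sets $Q_w$, and arranging the transversal $X$ to sit inside the representative factors so that the retraction argument applies cleanly. Establishing that $N\backslash T$ is a tree, so that $N$ carries no free free-factor, is the other point on which the exact matching of the two Whitehead-subset conditions depends.
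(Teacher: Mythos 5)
Your first assertion and your forward direction are exactly the paper's argument: the quotient graph of groups $(\overline{\mathcal{F}},Y,Y_0)$ with surjection $F\onto\overline F$ of kernel $N\coloneq\gen{\lsup{F}{R}}$, giving $N\cap\mathcal{F}(v)=N_v$; the Bass-Serre Structure Theorem for $N$ acting on $T$ with $N\leftmod T=\overline T$ a tree, giving the free-product decomposition of $N$ into conjugates of the $N_u$ with no extra free factor; the path-fixing computation of $\mathbf{C}_F(r_u)$; and the assembly of transported factor bases into a transversal-basis of $N$ (one small slip: $N_{w_i}$ is an $F$-conjugate of $N_u$, not in general an $N$-conjugate). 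The gap is in your converse. The pivot of your last step --- ``the Lemma shows $\lsup{F}{R'}$ is a Whitehead subset of $N$ exactly when $X$ is a free basis of $N$'' --- is false for a pre-chosen transversal $X$, and the Lemma says no such thing: it characterizes Whitehead subsets by a tree condition, while the definition only guarantees that \emph{some} transversal is a free-generating set, and Whitehead-ness does not transfer to an arbitrary transversal, even one adapted to the free factors. Concretely, let $F$ be free on $\{s,t\}$ and $N\coloneq\gen{\lsup{F}{s}}$, free with basis $x_n\coloneq\lsup{t^n}{s}$ $(n\in\Z)$; then $\lsup{F}{s}$ is a Whitehead subset of $N$, yet the transversal obtained from $(x_n\mid n\in\Z)$ by replacing $x_0,x_1$ with $\lsup{x_1}{x_0},\lsup{x_0}{x_1}$ is not a free-generating set: the pair $\lsup{x_1}{x_0},\lsup{x_0}{x_1}$ is Nielsen-reduced with no cancellation, so every non-trivial element of the subgroup it generates has length at least three in $x_0,x_1$, and $x_0$ is omitted. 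Hence, from Cohen-Lyndon asphericity of $R'$ in $F$ you cannot conclude that your factor-adapted $X$ is a basis, so your retraction argument has no input; the parallel claim that $\lsup{\mathcal{F}(u)}{r_u}$ is Whitehead in $N_u$ ``exactly when'' $X\cap N_{w_i}$ is a basis of $N_{w_i}$ fails for the same reason.

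This converse is precisely where the paper abandons transversal bookkeeping for the tree characterization: from Cohen-Lyndon asphericity of $R$ in $F$, Corollary~\ref{Cor:vertex} supplies an $N$-tree $T'$ with vertex set $\lsup{F}{\mkern-2mu R}$ and trivial edge stabilizers; restricting the action to $N_u=\gen{\lsup{\mathcal{F}(u)}{r_u}}$, a path-fixing computation in $T$ shows that the vertices of $T'$ with non-trivial $N_u$-stabilizer are exactly those in $\lsup{\mathcal{F}(u)}{r_u}$; since these stabilizers generate $N_u$, the quotient $N_u\leftmod T'$ is a tree, and $N_u$-equivariantly contracting suitable edges of $T'$ yields an $N_u$-tree with vertex set $\lsup{\mathcal{F}(u)}{r_u}$ and trivial edge stabilizers, whence Corollary~\ref{Cor:vertex} gives Cohen-Lyndon asphericity of $r_u$ in $\mathcal{F}(u)$. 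If you want to stay with free-product combinatorics instead, you would have to work with the basis-transversal $X_0$ that the hypothesis actually provides and compare the two decompositions $N=\mathop{\ast}_i N_{w_i}$ and $N=\mathop{\ast}_{x\in X_0}\gen{x}$ via a Kurosh-subgroup-theorem analysis; some such input beyond your chosen $X$ is indispensable.
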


\begin{proof}
For each $v \in VY$, let $\overline{\mathcal{F}}(v) \coloneq \mathcal{F}(v)/\normgen{r_v}$, and,
for each $e \in EY$, let $\overline{\mathcal{F}}(e) \coloneq \mathcal{F}(e)$.
By the hypotheses on  $R$, the edge maps for $\mathcal{F}$ then induce injective maps which
give a graph of groups
$(\overline{\mathcal{F}}, Y)$  with family of groups $( \overline{\mathcal{F}}(y) \mid y \in Y)$.
Let $\overline F$ denote the  fundamental group of
$(\overline{\mathcal{F}}, Y,Y_0)$.  Using the definition of fundamental groups, we then
 construct a natural group homomorphism
$F \mapsto \overline F$, which is
surjective and has kernel   $\gen{\lsup{F}{\mkern-1.6mu R}}$.
In particular, for each $v \in VY$,
\mbox{$\gen{\lsup{F}{\mkern-1.6mu R}} \cap \mathcal{F}(v) = \gen{\lsup{\mathcal{F}(v)}{r_v}}$.}

Let $T$ denote the Bass-Serre tree for $(\mathcal{F}, Y,Y_0)$, and let
$\overline T$ denote the Bass-Serre tree for $(\overline{\mathcal{F}}, Y,Y_0)$.
Using the definition  of Bass-Serre trees, we construct
a natural identification    $\gen{\lsup{F}{\mkern-1.6mu R}}\leftmod T  = \overline T$.
Using the Bass-Serre Structure Theorem for $\gen{\lsup{F}{\mkern-1.6mu R}}$ acting on $T$, we see that
$\gen{\lsup{F}{\mkern-1.6mu R}}$ is the fundamental group of a tree of groups over~$\overline T$, with trivial
edge groups, and with family of vertex groups of the form
\mbox{$(\gen{\lsup{F}{\mkern-2mu R}} \cap\,\, \lsup{z}{\mathcal{F}(v)} \mid v \in VY, z \in Z_v)$,} where,
 for each $v \in VY$,  $Z_v$ is some transversal for the
$\gen{\lsup{F}{\mkern-2mu R}}\mathcal{F}(v)$-action on~$F$ by multiplication on the right.
Thus \mbox{$\gen{\lsup{F}{\mkern-2mu R}}
= \mathop{\ast}\limits_{v \in VY} \mathop{\ast}\limits_{z \in Z_v}  \gen{\lsup{z\mathcal{F}(v)}{r_v}}
= \mathop{\ast}\limits_{u \in U} \mathop{\ast}\limits_{z \in Z_u}  \gen{\lsup{z\mathcal{F}(u)}{r_u}}$.}

We claim that, for each $u \in U$, $\mathbf{C}_{F}(r_u) = \mathbf{C}_{\mathcal{F}(u)}(r_u)$.
If $f \in \mathbf{C}_{F}(r_u)$, then $r_u$ fixes the path in $T$ from $1 \mathcal{F}(u)$ to
$f \mathcal{F}(u)$, and, by the hypotheses, $r_u$ fixes no edges of $T$. Hence the path under consideration
is trivial, and, hence,
$f \in \mathcal{F}(u)$, as claimed.

Suppose that, for each $u \in U$, $r_u$ is
Cohen-Lyndon aspherical in  $\mathcal{F}(u)$, and let $Y_u$ be a transversal for the
$\gen{\lsup{\mathcal{F}(u)}{r_u}}\mathbf{C}_{\mathcal{F}(u)}(r_u)$-action on  $\mathcal{F}(u)$
by multiplication on the right such that $\lsup{Y_u}{r_u}$ is a free-generating set of
$\gen{\lsup{\mathcal{F}(u)}{r_u}}$.  Then $Z_uY_u$ is a transversal for
the $\gen{\lsup{F}{\mkern-2mu R}}\mathbf{C}_{F}(r_u)$-action  on  $F$ by multiplication on the right
and $\bigvee\limits_{u\in U} \lsup{Z_uY_u}{r_u}$ is a free-generating set of $\gen{\lsup{F}{\mkern-2mu R}}$.
Thus
$R$ is Cohen-Lyndon aspherical in $F$.

Conversely, suppose that $R$ is  Cohen-Lyndon aspherical in $F$  and that $u$ is an element of~$U$.
By Corollary~\ref{Cor:vertex},
$1 \not\in R$, no two distinct elements of $R$ are $F$-conjugate, and
there exists some $\gen{\lsup{F}{\mkern-2mu R}}$-tree $T'$ with vertex set
$ \lsup{F}{\mkern-2mu R} $  and with trivial edge stabilizers.
By Corollary~\ref{Cor:vertex}, it remains to show that
$\lsup{\mathcal{F}(u)}{r_u}$ is the vertex set of some
$\gen{\lsup{\mathcal{F}(u)}{r_u}}$-tree
with trivial edge stabilizers.
The subgroup $\gen{\lsup{\mathcal{F}(u)}{r_u}}$ of $\gen{\lsup{F}{\mkern-2mu R}}$
acts on $T'$ and for each  vertex of $T'$,  say $ \lsup{f}{r_{u'}} $
with $f \in F$
and   $u' \in U$, the  $\gen{\lsup{\mathcal{F}(u)}{r_u}}$-stabilizer is
 \mbox{$\gen{\lsup{\mathcal{F}(u)}{r_u}} \cap \lsup{f}{ (\mathbf{C}_{\mathcal{F}(u')}(r_{u'}))}$.}
The latter intersection fixes the
path in $T$ from $1\mathcal{F}(u)$ to $f \mathcal{F}(u')$, and, hence, is
 trivial unless $u'=u$ and $f \in \mathcal{F}(u)$.  Thus the
set of vertices of $T'$ with non-trivial
$\gen{\lsup{\mathcal{F}(u)}{r_u}}$-stabilizer is the subset
$\lsup{\mathcal{F}(u)}{r_u}$ of $\lsup{F}{R} \,\,\,(= VT')$.
Since the vertex stabilizers then generate $\gen{\lsup{\mathcal{F}(u)}{r_u}}$
we see that $\gen{\lsup{\mathcal{F}(u)}{r_u}}\leftmod T'$ is a tree.
Hence, successively $\gen{\lsup{\mathcal{F}(u)}{r_u}}$-equivariantly
contracting suitable edges of $T'$ produces an $\gen{\lsup{\mathcal{F}(u)}{r_u}}$-tree
with vertex set $\lsup{\mathcal{F}(u)}{r_u}$ and trivial edge stabilizers, as desired.
\end{proof}

\begin{Rem}\label{Rem:KS1} If $A$ and $B$ are groups and $r$ is an element of $A$,
it follows from Theorem~\ref{Thm:KS1}
that $r$ is Cohen-Lyndon aspherical in $A$  if and only if $r$ is Cohen-Lyndon aspherical in $ A{\ast}B $.
\hfill\qed
\end{Rem}

\begin{Cor}\label{Cor:KS} Suppose that $A$ and $B$ are groups  and that  $A_1$ is a subgroup of $A$
and that  $B_1$ is a subgroup of $B$ and that  $r$ is an element of $A_1{\ast}B_1$ and that
  the natural maps embed $A_1$ and $B_1$  in $(A_1{\ast}B_1)/\normgen{r}$.

 If $r$ is  Cohen-Lyndon aspherical in $A_1{\ast}B_1$,  then $r$ is   Cohen-Lyndon aspherical in $A{\ast}B$.
\end{Cor}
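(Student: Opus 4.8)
The plan is to realize $A{\ast}B$ as the fundamental group of a graph of groups that has $A_1{\ast}B_1$ as a vertex group, and then to invoke Theorem~\ref{Thm:KS1}. Concretely, I would take $Y$ to be the segment with vertex set $\{v_A, v_0, v_B\}$ and edge set $\{e_A, e_B\}$, oriented so that $\iota_Y e_A = \iota_Y e_B = v_0$, $\tau_Y e_A = v_A$, and $\tau_Y e_B = v_B$. I assign the vertex groups $\mathcal{F}(v_A) = A$, $\mathcal{F}(v_0) = A_1{\ast}B_1$, $\mathcal{F}(v_B) = B$, the edge groups $\mathcal{F}(e_A) = A_1 \le A_1{\ast}B_1 = \mathcal{F}(v_0)$ and $\mathcal{F}(e_B) = B_1 \le A_1{\ast}B_1$, and the edge maps $\overline t_{e_A}\colon A_1 \into A$ and $\overline t_{e_B}\colon B_1 \into B$ equal to the given inclusions. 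Since $Y$ is a tree, the fundamental group is $A \ast_{A_1}(A_1{\ast}B_1) \ast_{B_1} B$; absorbing the free factor $A_1$ of $A_1{\ast}B_1$ into $A$ and then the free factor $B_1$ into $B$ identifies this group with $A{\ast}B$, and under this identification the vertex group $\mathcal{F}(v_0)$ is carried onto the subgroup $A_1{\ast}B_1$ of $A{\ast}B$ by the natural inclusion, so the distinguished element $r \in A_1{\ast}B_1$ is sent to $r \in A{\ast}B$.

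Next I would apply Theorem~\ref{Thm:KS1} to the family $R = (r_v \mid v \in VY)$ defined by $r_{v_0} \coloneq r$ and $r_{v_A} \coloneq r_{v_B} \coloneq 1$. Since $r$ is Cohen-Lyndon aspherical in $A_1{\ast}B_1$ we have $r \ne 1$, so $U = \{v \in VY \mid r_v \ne 1\} = \{v_0\}$. The hypotheses of the theorem at $v_A$ and $v_B$ hold vacuously because $\gen{\lsup{A}{1}} = \gen{\lsup{B}{1}} = \{1\}$; the only substantive conditions are those at $v_0$, namely $\gen{\lsup{(A_1{\ast}B_1)}{r}} \cap A_1 = \{1\}$ and $\gen{\lsup{(A_1{\ast}B_1)}{r}} \cap B_1 = \{1\}$. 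Here $\gen{\lsup{(A_1{\ast}B_1)}{r}}$ is precisely the normal closure $\normgen{r}$ of $r$ in $A_1{\ast}B_1$, so these two conditions assert exactly that the natural maps $A_1 \to (A_1{\ast}B_1)/\normgen{r}$ and $B_1 \to (A_1{\ast}B_1)/\normgen{r}$ are injective, which is the standing embedding hypothesis.

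With the hypotheses verified, Theorem~\ref{Thm:KS1} yields that $\{r_u \mid u \in U\} = \{r\}$ is Cohen-Lyndon aspherical in $F = A{\ast}B$ if and only if, for each $u \in U$, $r_u$ is Cohen-Lyndon aspherical in $\mathcal{F}(u)$; that is, if and only if $r$ is Cohen-Lyndon aspherical in $\mathcal{F}(v_0) = A_1{\ast}B_1$. Since the latter holds by assumption, $r$ is Cohen-Lyndon aspherical in $A{\ast}B$, as required.

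I expect the main obstacle to be the graph-of-groups bookkeeping in the first paragraph: one must choose the orientations and edge inclusions so that the fundamental group genuinely collapses to $A{\ast}B$ (the free-factor absorption $A \ast_{A_1}(A_1{\ast}B_1) \cong A{\ast}B_1$) while simultaneously arranging that the edge-intersection conditions of Theorem~\ref{Thm:KS1} translate into the available embedding hypothesis rather than into some stronger, unavailable statement. Pointing both edges away from $v_0$ is what places the substantive conditions at $v_0$, where the edge groups $A_1$ and $B_1$ sit as subgroups of $\mathcal{F}(v_0) = A_1{\ast}B_1$; once the segment of groups is set up in this way, the remaining verifications are immediate.
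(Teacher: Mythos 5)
Your proof is correct, and your verification of the hypotheses of Theorem~\ref{Thm:KS1} is accurate: with both edges of the segment having initial vertex $v_0$, the edge groups $A_1$ and $B_1$ sit literally inside $\mathcal{F}(v_0)=A_1{\ast}B_1$, the conditions at $v_A$ and $v_B$ are vacuous because $r_{v_A}=r_{v_B}=1$, and the two substantive conditions at $v_0$, namely $\gen{\lsup{A_1{\ast}B_1}{r}}\cap A_1=\{1\}$ and $\gen{\lsup{A_1{\ast}B_1}{r}}\cap B_1=\{1\}$, are exactly the standing embedding hypotheses. Where you differ from the paper: it applies Theorem~\ref{Thm:KS1} twice, each time to a one-edge graph of groups, first to $(A_1{\ast}B_1){\ast}_{B_1}B\;(=A_1{\ast}B)$ and then to $(A_1{\ast}B){\ast}_{A_1}A\;(=A{\ast}B)$. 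That iterated route must re-establish the edge condition at the second stage --- that $A_1$ embeds in $(A_1{\ast}B)/\normgen{r}$ --- and it extracts this from the first application's conclusion $\gen{\lsup{F}{R}}\cap\mathcal{F}(v)=\gen{\lsup{\mathcal{F}(v)}{r_v}}$, which embeds $(A_1{\ast}B_1)/\normgen{r}$ in $(A_1{\ast}B)/\normgen{r}$ and hence embeds $A_1$ there. Your one-shot segment $A{\ast}_{A_1}(A_1{\ast}B_1){\ast}_{B_1}B$ avoids that intermediate bookkeeping entirely, since both substantive edge conditions live at $v_0$, where they coincide verbatim with the given hypotheses; the only price is the routine identification of the fundamental group with $A{\ast}B$ carrying $\mathcal{F}(v_0)$ onto the natural copy of $A_1{\ast}B_1$, which you handled. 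One small remark: your worry about orienting both edges away from $v_0$ is unnecessary (though harmless) --- the conditions of Theorem~\ref{Thm:KS1} at a vertex $v$ are stated in terms of $\gen{\lsup{\mathcal{F}(v)}{r_v}}$, so at $v_A$ and $v_B$ they are vacuous whichever way the edges point, and with the opposite orientation the substantive conditions would simply reappear at $v_0$ as the $\tau$-side conditions with the same content.
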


\begin{proof}
 By applying Theorem~\ref{Thm:KS1} to  \mbox{$(A_1 {\ast} B_1){\ast}_{B_1}B\,\,\,\,\,(=A_1 {\ast} B)$,} we see that $r$ is
Cohen\d1Lyndon aspherical in  $A_1 {\ast} B$ and also  the natural map  embeds
  $(A_1 {\ast} B_1)/\normgen{r}$ in $(A_1 {\ast} B)/\normgen{r}$.
Hence, the natural map  embeds   $A_1$ in $(A_1 {\ast} B)/\normgen{r}$.

 By applying Theorem~\ref{Thm:KS1}  to  $(A_1 {\ast} B){\ast}_{A_1}A \,\,\,\,\,(=A {\ast} B)$, we see that
 $r$ is  Cohen-Lyndon aspherical in $A {\ast} B$.
\end{proof}

\begin{Cor}\label{Cor:KS4}  Suppose that $F$ is  locally indicable  and that
 $T$ is an $F$-tree with trivial edge stabilizers and that $r$ is
an element of $F$ which fixes no vertex of $T$.

 Let
 $H$ be a  subgroup of~$F$ such that  $H \supseteq \glue(F, \Eaxis(r))$ and
$H = \gen{\glue(H, \axis(r))}$.
  If   $r$ is  Cohen-Lyndon aspherical in $H$, then $r$ is Cohen-Lyndon aspherical in~$F$.

\end{Cor}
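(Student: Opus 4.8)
The plan is to realise $F$ as the fundamental group of a graph of groups with trivial edge groups in which $H$ appears as a vertex group carrying $r$, and then to invoke Theorem~\ref{Thm:KS1} with the family $(r_v)$ taken to be $r$ at that one vertex and $1$ elsewhere. Since the edge groups are trivial, the intersection hypotheses of Theorem~\ref{Thm:KS1} hold automatically, and its conclusion is exactly that $r$ is Cohen-Lyndon aspherical in $F$ if and only if $r$ is Cohen-Lyndon aspherical in $H$, which is what we want. Observe first that $H$ is locally indicable, being a subgroup of $F$, that $r \ne 1$ since $r$ fixes no vertex, and that $r$ fixes no vertex of the $H$-tree obtained by restricting the $F$-action, so the hypotheses pass cleanly to $H$.

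To produce the decomposition I would work inside $T$. Set $T_H \coloneq H\,\axis(r)$. Applying Remark~\ref{Rem:tree} to the group $H$, its generating set $\glue(H,\axis(r))$, and the subtree $\axis(r)$ shows that $T_H$ is an $H$-subtree of $T$ onto whose quotient $H\leftmod T_H$ the set $\axis(r)$ surjects. Using the hypothesis $\glue(F,\Eaxis(r)) \subseteq H$, I would next check that the $F$-stabiliser of $T_H$ is exactly $H$: if $f$ stabilises $T_H$ setwise then $f$ carries each edge of $\axis(r)$ to an edge of some translate $h\,\axis(r)$ with $h \in H$, and a short computation places $h^{-1}f$ in $\glue(F,\Eaxis(r))\subseteq H$, whence $f \in H$. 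The same edge-gluing hypothesis shows that distinct $F$-translates of $T_H$ are edge-disjoint. I would then form the $F$-tree $\hat T$ obtained from $T$ by collapsing the $F$-invariant subforest $F\,T_H$; since $T$ has trivial edge stabilisers and the surviving edges of $\hat T$ are edges of $T$, the tree $\hat T$ again has trivial edge stabilisers, and the Bass-Serre Structure Theorem expresses $F$ as the fundamental group of a graph of groups over $F\leftmod \hat T$ with trivial edge groups, in which the vertex coming from the image of $T_H$ has group the $F$-stabiliser of the corresponding component of $F\,T_H$.

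The main obstacle is to show that this component is precisely $T_H$, so that the designated vertex group is exactly $H$ and contains $r$. The collapse a priori only yields the stabiliser of the whole component of $F\,T_H$ meeting $T_H$, which could in principle be larger if some $F$-translate $f\,T_H$ met $T_H$ in a single vertex without sharing an edge. Ruling this out is exactly where the second hypothesis $H = \gen{\glue(H,\axis(r))}$ must be combined with $\glue(F,\Eaxis(r)) \subseteq H$: the edge-disjointness already handles translates that share an edge, and the generation hypothesis is what must force the remaining vertex-level crossings back into $H$, so that $T_H$ is a full component with $F$-stabiliser $H$. Once this identification is secured, $r \in H$ occupies the distinguished vertex group, the remaining vertices carry $r_v = 1$, and Theorem~\ref{Thm:KS1} delivers the equivalence. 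As a cross-check on the delicate step, Corollary~\ref{Cor:vertex} together with the freeness of the actions of $\gen{\lsup{F}{\mkern-2mu r}}$ and $\gen{\lsup{H}{\mkern-2mu r}}$ on $T$ coming from Theorem~\ref{Thm:1}(i) can be used to verify the centraliser and tree conditions directly, should the graph-of-groups route need supplementing.
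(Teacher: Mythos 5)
Your argument breaks down at exactly the step you flag as ``the main obstacle,'' and that step is not merely unproved --- it is false. Your plan requires $H$ to end up as a vertex group of a graph-of-groups decomposition of $F$ with trivial edge groups, which would force $H$ to be a free factor of $F$; the hypotheses of the corollary do not imply this. Concretely, let $F$ be free on $\{a,b\}$, let $T$ be the Bass-Serre tree of $\gen{a}{\ast}\gen{b}$ (edge set $F$, so trivial edge stabilizers), and let $r = a^2b^2$. One computes $\Eaxis(r) = \{r^i,\, r^i a^2 \mid i \in \Z\}$, so $\glue(F,\Eaxis(r)) = \{r^k,\, r^i a^{\pm 2} r^{-j}\}$, and $H \coloneq \gen{\glue(F,\Eaxis(r))} = \gen{a^2, b^2} = \gen{a^2}{\ast}\gen{b^2}$ satisfies both hypotheses of the corollary. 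But $H$ is free of rank two and proper in $F$ (map $F$ onto $\Z/2 \times \Z/2$ to see $a \notin H$), so by Grushko $H$ cannot be a free factor of $F$. In this same example your collapse degenerates completely: the translate $aT_H$ meets $T_H$ at the vertex $\gen{a}$ although $a \notin H$, and since $1 \in \Eaxis(r)$ we get $F\,\Eaxis(r) = ET$, so $FT_H = T$ and the component of $FT_H$ containing $T_H$ is the whole tree, with stabilizer $F$, not $H$. Such vertex-level crossings are unavoidable whenever $\axis(r)$ passes through a vertex whose $F$-stabilizer is not contained in $H$, which is the typical situation; no amount of use of the generation hypothesis $H = \gen{\glue(H,\axis(r))}$ can rule them out. (Your preliminary observations --- that the setwise $F$-stabilizer of $T_H$ is $H$ and that distinct translates of $T_H$ are edge-disjoint --- are correct, but insufficient.) A further warning sign: beyond noting that $H$ inherits local indicability (which you then never use), your argument would prove the statement for arbitrary groups $F$, whereas the corollary genuinely needs local indicability.

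The paper's proof takes a different route that shows where the real work lies. It first collapses all edges of $T$ outside a single orbit $Fe$ with $e \in \Eaxis(r)$, reducing to the case $ET = Fe$; the hypothesis $\glue(F,\Eaxis(r)) \subseteq H$ is used there to show $ET' = He$ for $T' = H\axis(r)$ (with Remark~\ref{Rem:tree} supplying that $T'$ is an $H$-subtree), and the hypothesis $H = \gen{\glue(H,\axis(r))}$ guarantees $H$ acts on $T'$ with small quotient. Then $F\leftmod T$ has one edge and one or two vertices, so $F = A{\ast}B$ or $F = A{\ast}\gen{b}$, and correspondingly $H = A_1{\ast}B_1$, $H = A_1 {\ast} \lsup{b}{\mkern-2mu A_2}$, or $H = A_1{\ast}\gen{ba}$ with $A_1, A_2 \le A$, $B_1 \le B$. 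The ascent from $H$ to $F$ is then accomplished by Theorem~\ref{Thm:KS1} applied to amalgams with \emph{nontrivial} edge groups, such as $(A_1{\ast}B_1){\ast}_{B_1}B$ and then $(A_1{\ast}B){\ast}_{A_1}A$ (this is Corollary~\ref{Cor:KS}), plus Remark~\ref{Rem:KS1} in the degenerate subcase. The intersection hypotheses of Theorem~\ref{Thm:KS1}, e.g.\ $\gen{\lsup{A_1{\ast}B_1}{r}} \cap B_1 = \{1\}$, are exactly what the Brodski\u{\i}-Howie-Short Freiheitssatz (Corollary~\ref{Cor:Howie}) supplies, and that is precisely where local indicability enters. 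Your shortcut --- ``trivial edge groups, so the intersection hypotheses hold automatically'' --- bypasses exactly this hard point, which is why it cannot work.
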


\begin{proof}   Let $e$ be an edge in $\axis(r)$.   Let $\overline T$ denote the $F$-tree obtained from
$T$ by collapsing all edges in $ET-Fe$.  Then $E\overline T = Fe$ and  $r$ shifts $e$ and
  $\Eaxis_{\overline T}(r) = Fe \cap \Eaxis_T(r)$ and
$\glue(H, \axis_{ T}(r))  \subseteq \glue(H, \axis_{\overline T}(r))$.
It follows that we can replace $T$ with $\overline T$ and assume that $ET = Fe$.

Let $T' = H(\axis(r))$.   By Remark~\ref{Rem:tree}, $T'$ is an $H$-subtree of $T$.

For each $e' \in \Eaxis(r)$, there exists some $f \in F$ such that $fe = e'$,
and then \mbox{$f \in \glue(F,\Eaxis(r)) \subseteq H$} and $e' = fe \in He$.  Hence $\Eaxis(r) \subseteq He$.
Hence $ET' = H(\Eaxis(r)) \subseteq He$.  In summary, $ET = Fe$ and $ET'=He$.

\medskip

\noindent \textbf{Case 1.}  $\abs{F \leftmod  (V T )}=2$.

Here, we can write $F  = A{\ast}B$
and no $F$-conjugate of $r$  lies in   $A{\cup}B$,  and
 $H = A_1{\ast}B_1$ for some $A_1 \le A$ and some $B_1 \le B$.
By Corollary~\ref{Cor:Howie} and Corollary~\ref{Cor:KS},
$r$ is  Cohen-Lyndon aspherical in $ A{\ast}B\,\,\,( =F) $.

\medskip

\noindent \textbf{Case 2.}  $\abs{F  \leftmod  (V T )}=1$.

Here, we can write $F   = A \ast (b\colon \{1\}\to\{1\})$ and no $F$-conjugate of $r$  lies in   $A$.

\medskip

\noindent \textbf{Case 2.1.}  $\vert{H \leftmod  (V T')}\vert=2$.

Here,   $H = A_1{\ast}\lsup{\,b}{\mkern-2mu A_2}$ for some $A_1 \le A$ and some $A_2 \le A$.

Then $r$ is Cohen-Lyndon aspherical in  $ A{\ast}\lsup{\,b}{\mkern-2mu A} $, by
Corollary~\ref{Cor:Howie} and Corollary~\ref{Cor:KS}.

 Now
$r$ is Cohen-Lyndon aspherical in
$(A{\ast}\lsup{\,b}{\mkern-2mu A})  \ast (b \colon A \to \,\lsup{b}{\mkern-2mu A})\,\,\,\,\,\,(= A {\ast} \gp{b}{\,\,\,} = F)$, by
Corollary~\ref{Cor:Howie} and Theorem~\ref{Thm:KS1}.

\medskip

\noindent \textbf{Case 2.2.}   $\vert{H \leftmod  (V T')}\vert=1$.

Here,   $H = A_1{\ast}(ba\colon \{1\}\to\{1\}) $ for some $A_1 \le A$ and some $a \in A$.

If no $F$-conjugate of $r$ lies in
$\gp{ba}{\,\,\,}$, then $r$ is Cohen-Lyndon aspherical in
$ A{\ast}\gp{ba}{\,\,\,}\,\,\,\,\,(= A{\ast} \gp{b}{\,\,\,}= F)$, by
Corollary~\ref{Cor:Howie} and Corollary~\ref{Cor:KS}.

Thus we may assume that $r$  itself lies in $\gp{ba}{\,\,\,}$ and $r$ is then clearly
Cohen-Lyndon aspherical in
$\gp{ba}{\,\,\,}$. Hence, $r$ is Cohen-Lyndon aspherical in  $A{\ast}\gp{ba}{\,\,\,} \,\,\,\,\,(= A{\ast} \gp{b}{\,\,\,}= F)$, by
Remark~\ref{Rem:KS1}.

\medskip

Hence, in all cases,  $r$ is  Cohen-Lyndon aspherical in $F$.
\end{proof}

\begin{Not}  If $a$ and $b$ are elements of $F$, we shall let
 $ \lsup{\gp{a}{\quad}}{b} \coloneq (\lsup{a^i}{b} \mid i \in \Z)$.
For any subset $J$ of $\Z$, we let \mbox{$\lsup{a^{J}}{b}
\coloneq  (\lsup{a^j}{b}  \mid j \in J) \subseteq   \lsup{\gp{a}{\quad}}{b}$.}
\hfill\qed
\end{Not}

The following will be useful for simplifying calculations.

\begin{Lem}\label{Lem:add gen}  Suppose that $f$ is an element of $F$ and that
$N$ is a normal subgroup of $F$ such that \mbox{$F/N = \gp{fN}{\quad}$.}
Let $z$ be a symbol, let \mbox{$\tilde F \coloneq F {\ast}\gp{z}{\quad}$}, and let $\tilde N$ denote the
smallest normal subgroup of $\tilde F$ containing $N \cup \{f\overline z\}$.
Then \mbox{$\tilde F/\tilde N = \gp{f\tilde N}{\quad} =  \gp{z\tilde N}{\quad}$} and $N$ is a free factor of $\tilde N$.
\end{Lem}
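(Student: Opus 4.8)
The plan is to realize $\tilde N$ as the kernel of an explicit map $\pi\colon\tilde F\onto\Z$ and then to read off the structure of $\tilde N$ from its action on the Bass-Serre tree of the splitting $\tilde F = F\ast\gp{z}{\quad}$.

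First I would construct $\pi$. Since $F/N = \gp{fN}{\quad}$ is infinite cyclic, there is a homomorphism $\psi\colon F\to\Z$ with kernel $N$ and $\psi(f)=1$; extend it to $\pi\colon\tilde F = F\ast\gp{z}{\quad}\to\Z$ by declaring $\pi(z)\coloneq 1$. Then $\pi$ kills $N$ and $\pi(f\overline z)=\pi(f)-\pi(z)=1-1=0$, so $\tilde N\subseteq\ker\pi$ and $\pi$ descends to $\tilde F/\tilde N\to\Z$. Conversely, the image of $F$ in $\tilde F/\tilde N$ equals $F/(F\cap\tilde N)=F/N$ (using $N\subseteq F\cap\tilde N\subseteq F\cap\ker\pi=N$), which is generated by $f\tilde N$; and $f\overline z\in\tilde N$ gives $z\tilde N=f\tilde N$, so $\tilde F/\tilde N$ is generated by the single element $f\tilde N$ and is therefore cyclic. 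As it surjects onto $\Z$ through $\pi$ (because $\pi(z)=1$), it must be infinite cyclic, the induced map $\tilde F/\tilde N\to\Z$ is an isomorphism, and hence $\tilde N=\ker\pi$. This yields $\tilde F/\tilde N = \gp{f\tilde N}{\quad} = \gp{z\tilde N}{\quad}$.

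For the free-factor assertion I would let $\tilde N$ act on the Bass-Serre tree $T$ of $\tilde F = F\ast\gp{z}{\quad}$, where $VT = (\tilde F/F)\vee(\tilde F/\gp{z}{\quad})$, the edge set is $\tilde F$, and all edge stabilizers are trivial. Using $\tilde N=\ker\pi$ together with $\pi(F)=\pi(\gp{z}{\quad})=\Z$, one gets $\tilde N\, F=\tilde N\,\gp{z}{\quad}=\tilde F$, so $\tilde N$ has a single orbit of $F$-vertices and a single orbit of $z$-vertices. The $\tilde N$-stabilizer of the vertex $F$ is $F\cap\tilde N = N$, whereas the $\tilde N$-stabilizer of the vertex $\gp{z}{\quad}$ is $\gp{z}{\quad}\cap\ker\pi=\{1\}$. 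The edges fall into $[\tilde F:\tilde N]$ (countably infinitely many) orbits, all with trivial stabilizer, and each edge joins the $F$-vertex to the $z$-vertex. Thus the quotient graph $\tilde N\leftmod T$ is connected with exactly two vertices joined by countably many parallel edges, carrying vertex groups $N$ and $\{1\}$ and trivial edge groups. By the Bass-Serre Structure Theorem, $\tilde N$ is the free product of its vertex groups with the (free) fundamental group of the underlying graph, so $\tilde N\cong N\ast K$ with $K$ free of countably infinite rank. In particular $N$ is a free factor of $\tilde N$.

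The construction of $\pi$ and the identification of $\tilde F/\tilde N$ are routine. The step I expect to require the most care is the orbit-and-stabilizer bookkeeping for the $\tilde N$-action on $T$: one must verify precisely that there is a \emph{single} orbit of $F$-vertices whose stabilizer is \emph{exactly} $N$ (so that $N$ appears undistorted as a vertex group), while every $z$-vertex and every edge is $\tilde N$-free. The linchpin that makes these counts clean is the prior identification $\tilde N=\ker\pi$, since only the full kernel lets one compute the double cosets $\tilde N\leftmod\tilde F/F$ and $\tilde N\leftmod\tilde F/\gp{z}{\quad}$ directly from $\pi$; without equality the orbits could a priori be finer and the vertex group at the $F$-vertex could be a proper subgroup of $N$.
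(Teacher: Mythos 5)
Your proof is correct, and the first half (constructing $\pi$, deducing $F\cap\tilde N=N$, $\tilde F/\tilde N=\gp{f\tilde N}{\quad}=\gp{z\tilde N}{\quad}$, and hence $\tilde N=\ker\pi$) is a careful writing-out of what the paper dismisses as ``easy to see''. For the free-factor claim, however, you take a genuinely different route. The paper argues by generators and relations: from $F=\gp{f}{\quad}\ltimes N$ it exhibits the explicit decomposition $\tilde F = F{\ast}\gp{z}{\quad} = \gp{f}{\quad}\ltimes\bigl(N{\ast}\gp{\lsup{\gp{f}{\quad}}{(f\overline z)}}{\quad}\bigr)$ and reads off $\tilde N = N{\ast}\gp{\lsup{\gp{f}{\quad}}{(f\overline z)}}{\quad}$ directly, thereby producing an explicit free basis $\{\lsup{f^i}{(f\overline z)}\mid i\in\Z\}$ for a complementary free factor. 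You instead identify $\tilde N$ with $\ker\pi$ and apply the Bass-Serre Structure Theorem to its action on the tree of the splitting $\tilde F=F{\ast}\gp{z}{\quad}$; your orbit-and-stabilizer bookkeeping is all correct (transitivity on each vertex type follows from $\pi(F)=\pi(\gp{z}{\quad})=\Z$ exactly as you say, the vertex group at $F$ is $F\cap\ker\pi=N$, the $z$-vertices and edges are $\tilde N$-free, and there are $\abs{\tilde F/\tilde N}=\aleph_0$ edge orbits), yielding $\tilde N\cong N\ast K$ with $K$ free of countably infinite rank --- the same group as the paper's, described abstractly. You also correctly flag the linchpin: one needs the full equality $\tilde N=\ker\pi$ before the coset counts can be computed from $\pi$. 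As for what each approach buys: yours is more conceptual, avoids rewriting entirely, and sits naturally inside the paper's general framework of trees with trivial edge stabilizers; the paper's computation is sharper in that the explicit conjugate basis $\lsup{\gp{f}{\quad}}{(f\overline z)}$ is precisely the normal form that reappears (re-derived by the same generators-and-relations method) in the proof of Lemma~\ref{Lem:EH}, where one must track which conjugates $\lsup{a^j}{(a\overline b)}$ support a given element. For the lemma as stated, your abstract version is fully sufficient.
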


\begin{proof} It is easy to see that $ F   =  \gp{f}{\quad} \ltimes  N  $ and
  \mbox{$\tilde F/\tilde N = \gp{f\tilde N}{\quad} = \gp{z\tilde N}{\quad}$.}\vspace{.2mm}
It is not difficult to use generators and relations to check that
 \mbox{$\tilde F = F {\ast}\gp{z}{\quad} =
 \gp{f}{\quad} \ltimes (N{\ast}\gp{\lsup{\gp{f}{\quad}}{(f\overline z)}}{\quad}),$}
and, hence, $\tilde N = N{\ast}\gp{\lsup{\gp{f}{\quad}}{(f\overline z)}}{\quad}$.
\end{proof}

The following is proved  but not stated  in~\cite{EdjvetHowie};   here, we  give an argument
with a different rewriting procedure,  to provide some variety.

\begin{Lem}{\normalfont(Edjvet-Howie)}\label{Lem:EH}
Suppose that $A$ and $B$ are locally indicable groups,
and that $r$ is an element of $A{\ast}B$ such that no $A{\ast}B$-conjugate of $r$ lies in $A{\cup} B$.

Let  $N$ be a normal subgroup of $A{\ast}B$ such that $r \in N$ and $(A{\ast}B)/N$ is infinite and cyclic.
 If $r$ is  Cohen-Lyndon aspherical in $N$, then $r$ is Cohen-Lyndon aspherical in~$A{\ast}B$.
\end{Lem}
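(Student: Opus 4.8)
The plan is to realise $A{\ast}B$ as a semidirect product $N\rtimes\gp{f}{\quad}$ and to reduce Cohen--Lyndon asphericity in $A{\ast}B$ to the hypothesis in $N$ by a \emph{staggered} rewriting, using the auxiliary generator supplied by Lemma~\ref{Lem:add gen}. First I would fix a homomorphism $\phi\colon A{\ast}B\onto\Z$ with kernel $N$ and an element $f$ with $\phi(f)=1$, so that $A{\ast}B=N\rtimes\gp{f}{\quad}$ and $\bigcup_{i\in\Z}Nf^{i}=A{\ast}B$. Since $r\in N$ and $N\unlhd A{\ast}B$, every conjugate $\lsup{f^{i}}{r}$ again lies in $N$, and $\gen{\lsup{A{\ast}B}{r}}=\gen{\lsup{N}{(\lsup{f^{i}}{r}\mid i\in\Z)}}$ is computed inside $N$. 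Thus the task becomes: show that the $\Z$-indexed family $(\lsup{f^{i}}{r}\mid i\in\Z)$ is Cohen--Lyndon aspherical in $N$, granted that $\{r\}$ is. Two bookkeeping facts, both consequences of the hypothesis that no $A{\ast}B$-conjugate of $r$ meets $A\cup B$, streamline this: $r$ is hyperbolic on the Bass--Serre tree of $A{\ast}B$, whence $\mathbf{C}_{A{\ast}B}(r)=\mathbf{C}_{N}(r)=\gen{\sqrt[N]{r}}\le N$ (as $\phi$ kills $\mathbf{C}_{A{\ast}B}(r)$); and for $i\ne j$ the elements $\lsup{f^{i}}{r}$, $\lsup{f^{j}}{r}$ are not $N$-conjugate, since an $N$-conjugacy would entail $i=j$.

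The role of Lemma~\ref{Lem:add gen} is to stagger this family. Adjoining a letter $z$ gives $\tilde F\coloneq(A{\ast}B){\ast}\gp{z}{\quad}$ and an enlarged kernel $\tilde N=N{\ast}W$, where $W$ is free on $(\lsup{f^{i}}{(f\ol z)}\mid i\in\Z)$ and $f$ permutes this basis by the shift $i\mapsto i+1$. The point is the identity $\lsup{z^{i}}{r}=\lsup{\ol{w}_{0}\ol{w}_{1}\cdots\ol{w}_{i-1}}{(\lsup{f^{i}}{r})}$, with $w_{k}\coloneq\lsup{f^{k}}{(f\ol z)}$, which places the translate $\lsup{z^{i}}{r}$ at ``depth~$i$'' in the free product $\tilde N=N{\ast}W$; the shifting free factor $W$ thereby spreads the conjugates of $r$ into disjoint regions of the Bass--Serre tree of $\tilde N$. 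The plan is to use this separation to realise $\gen{\lsup{\tilde F}{r}}$ as a free group on a staggered basis obtained from the hypothesis via Theorem~\ref{Thm:KS1} (applied to the free product $\tilde N=N{\ast}W$) together with Corollary~\ref{Cor:KS4}, to check the centraliser and transversal conditions of Corollary~\ref{Cor:vertex}, and hence to conclude that $r$ is Cohen--Lyndon aspherical in $\tilde F$; Remark~\ref{Rem:KS1}, applied to the free product $\tilde F=(A{\ast}B){\ast}\gp{z}{\quad}$, then returns the assertion for $A{\ast}B$.

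The hard part will be the staggering step itself. The extension $A{\ast}B=N\rtimes\gp{f}{\quad}$ is an ascending HNN-type extension whose associated subgroup is all of $N$, so neither Theorem~\ref{Thm:KS1} nor Corollary~\ref{Cor:KS4} applies to it directly, and one cannot naively split $\gen{\lsup{A{\ast}B}{r}}$ as a free product of the $\gp{f}{\quad}$-conjugates of $\gen{\lsup{N}{r}}$: the translates $\lsup{f^{i}}{r}$ stay $N$-conjugacy-tied to $r$ and do not separate inside $N$ alone. Everything hinges on arranging a tree for $\tilde F$, with trivial edge stabilizers and on which $r$ is hyperbolic, in which the auxiliary generator $z$ genuinely pulls the successive conjugates of $r$ apart, so that the glue conditions $\glue(\tilde F,\Eaxis(r))\subseteq\tilde N$ and $\tilde N=\gen{\glue(\tilde N,\axis(r))}$ of Corollary~\ref{Cor:KS4} can be verified and the resulting basis is free and staggered. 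Freeness of the subgroups produced along the way is guaranteed by local indicability through the Local-indicability Freiheitssatz (Theorem~\ref{Thm:1}(i)), while the embeddings furnished by Corollary~\ref{Cor:Howie} are what let the edge-transverse hypotheses of the Karrass--Solitar--type results be met; carrying out this rewriting and the attendant transversal bookkeeping is the substance of the Edjvet--Howie argument.
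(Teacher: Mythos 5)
Your scaffolding matches the paper's in outline (Lemma~\ref{Lem:add gen} to normalize generators, Remark~\ref{Rem:KS1}, Theorem~\ref{Thm:KS1}, Corollary~\ref{Cor:Howie}, and the bookkeeping facts about $\mathbf{C}_F(r)\le N$ and the $f^i$-translates being pairwise non-$N$-conjugate are all correct), but there is a genuine gap at exactly the point you flag as ``the hard part,'' and the one concrete mechanism you propose for it does not work. Adjoining $z$ as a free factor of the \emph{whole} group gives $\tilde N = N {\ast} W$ with $W$ free on the $w_i = \lsup{f^i}{(f\ol z)}$, but this achieves no separation whatsoever: $r$ and all of its translates $\lsup{f^i}{r}$ lie inside the single free factor $N$ of $\tilde N$, and your own identity $\lsup{z^{i}}{r}=\lsup{\ol{w}_{0}\cdots\ol{w}_{i-1}}{(\lsup{f^{i}}{r})}$ shows the $z$-translates are merely $\tilde N$-conjugate to the $f$-translates within that same factor. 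In the Bass--Serre tree of $N{\ast}W$ every relevant axis is carried into the $N$-vertex orbit, so the claim that $W$ ``spreads the conjugates of $r$ into disjoint regions'' is false, and neither Theorem~\ref{Thm:KS1} nor Corollary~\ref{Cor:KS4} gains any purchase --- which is consistent with your closing admission that the rewriting itself is left undone.

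The paper closes this gap by placing the auxiliary letters \emph{inside the factors} rather than outside: by Lemma~\ref{Lem:add gen} and Remark~\ref{Rem:KS1} one may enlarge $A$ and then $B$ by infinite cyclic free factors so that there exist $a\in A$ and $b\in B$ with $F/N=\gp{aN}{\quad}=\gp{bN}{\quad}$, where $F=A{\ast}B$. With $N_A\coloneq A\cap N$ and $N_B\coloneq B\cap N$ one then gets the explicit Magnus-style decomposition $N=N_A{\ast}N_B{\ast}\gp{\lsup{\gp{a}{\quad}}{(a\ol b)}}{\quad}$, on which conjugation by $a$ fixes $N_A$ and shifts the letters $\lsup{a^i}{(a\ol b)}$ --- it is this splitting of $N$ itself, compatible with the $A$, $B$ structure, that creates the separation your external $z$ cannot. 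The conjugates of $r$ are then pulled apart combinatorially: for each $j$ the cyclically reduced form of $r$ in $N_A{\ast}\lsup{a^j}{N_B}{\ast}\gp{\lsup{\gp{a}{\quad}}{(a\ol b)}}{\quad}$ occupies a finite set $J_j$ of shift letters; the relations $J_j\cup\{j\}=J_{j+1}\cup\{j\}$ and a minimality choice over $K=\{j\mid \min J_j\ge j\}$ produce, after replacing $r$ by a conjugate, a window with $r\in N_A{\ast}N_B{\ast}\gp{\lsup{a^{[0{\uparrow}\nu]}}{(a\ol b)}}{\quad}$ while no $N$-conjugate of $r$ lies in either sub-window $N_A{\ast}N_B{\ast}\gp{\lsup{a^{[0{\uparrow}(\nu{-}1)]}}{(a\ol b)}}{\quad}$ or $N_A{\ast}\lsup{a\ol b}{N_B}{\ast}\gp{\lsup{a^{[1{\uparrow}\nu]}}{(a\ol b)}}{\quad}$. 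Remark~\ref{Rem:KS1} transfers the hypothesis on $N$ to this finite free factor, and Theorem~\ref{Thm:KS1} --- with Corollary~\ref{Cor:Howie} supplying the trivial-intersection hypotheses --- applies to the HNN presentation of $F$ over this vertex group with stable letter $a$ carrying one sub-window to the other. Your proposal contains neither the internal placement of the generators nor any substitute for the $J_j$ minimality argument, so the decisive step is both missing and, as sketched, aimed in a direction that cannot succeed.
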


\begin{proof} Let $F = A{\ast}B$.

It follows  from
Lemma~\ref{Lem:add gen} and  Remark~\ref{Rem:KS1}   that,
by adjoining an infinite, cyclic free factor to $A$  if necessary,
we may assume that there exists some $a \in A$   such that $F/N = \gp{aN}{\quad}$.

Similarly, by adjoining an infinite, cyclic free factor  to $B$  if necessary,
we may further assume that there exists some $b \in   B$   such that $F/N  = \gp{aN}{\quad} =   \gp{bN}{\quad}$.

Let $N_A \coloneq A \cap N$ and let $N_B \coloneq B \cap N$.
Notice that $\lsup{a}{N_A} = N_A$ and   $\lsup{a}{N_B} = \lsup{a \overline b}{N_B}$.
It is not difficult to use generators and relations to check that
 $F   =  \gp{a}{\quad} \ltimes (N_A{\ast}N_B{\ast}\gp{\lsup{\gp{a}{\quad}}{(a\overline b)}}{\quad}), $
and, hence,  \mbox{$N = N_A{\ast}N_B{\ast}\gp{\lsup{\gp{a}{\quad}}{(a\overline b)}}{\quad}$.}

We may replace $r$ with any $F$-conjugate of $r$, since any automorphism  of $N$ respects
Cohen-Lyndon asphericity.

Let $j \in \Z$.  Conjugation by $a^j$ induces an automorphism of $N$ and we obtain
the free-product decomposition
\mbox{$N = N_A \ast \lsup{a^j}{\mkern-2mu N_B} \ast \gp{\lsup{\gp{a}{\quad}}{(a\overline b)}}{\quad}$};
here, we consider the   resulting cyclically reduced
expression  for $r$, and  we see that
there exists some finite subset $J_j$ in $\Z$ such that this cyclically reduced
expression   lies  in
$N_A{\ast}\,\,\lsup{a^j}{\mkern-2mu N_B}{\ast}\,\,\gp{\lsup{a^{J_j}}{\mkern-2mu (a\overline b)}}{\quad}$.
 We may assume that $J_j$ is minimal and
 that $r$ itself lies in this free factor of $N$.
If $J_j$ is empty, then,
by   replacing $r$ with $\lsup{\overline a^j}{r}$, we may assume that $r \in N_A{\ast}N_B$, and then,
by Remark~\ref{Rem:KS1},  $r$~is
Cohen-Lyndon aspherical in  \mbox{$N_A{\ast}  N_B $}.
By  Corollary~\ref{Cor:Howie} and  Corollary~\ref{Cor:KS},
$r$~is  Cohen-Lyndon aspherical in $A{\ast}B \,\,\,\,(= F)$.

Thus we may assume that, for each $j\in \Z$, $J_j$ is non-empty.

Let $j \in \Z$.  Then $\lsup{a^{j{+}1}}{\mkern-2mu  N_B}
 = \lsup{(\lsup{a^j}{\mkern-2mu (a\overline b)})}{(\lsup{a^j}{\mkern-2mu N_B})}$,
and we find that $J_{j}  \subseteq J_{j{+}1} \cup \{j\}$.
Similarly,  \mbox{$J_{j{+}1}  \subseteq J_j \cup \{j\}$}, and, hence,
$J_{j} \cup \{j\} = J_{j{+}1} \cup \{j\}$.

It follows that \mbox{$\{j\in\Z \mid   \min J_{j} < j\}
=\{j\in\Z \mid \min J_{j{+}1} = \min J_{j} < j\} = \{j\in\Z \mid \min J_{j{+}1} < j\}$.}  It
is not difficult to see that this set  cannot be all of $\Z$.
Let $K \coloneq  \{j\in\Z \mid   \min J_{j} \ge j\}$. Then
\mbox{$K=  \{j\in\Z \mid \min J_{j{+}1} \ge j\}$} and $K \ne \emptyset$.
Let us choose $k\in K $ to minimize
the pair $(\,\abs{J_k}, (\min J_k) - k)$.
If  $k{+}1 \in K$, then $\min J_{k{+}1} \ge k{+}1$,
and  then  $k \not\in J_{k{+}1}$, and
then  $J_{k{+}1}\subseteq  J_k$, and then, by the minimality property of $k$,
  $J_{k{+}1}=J_k$  and
\mbox{$(\min J_{k{+}1})-(k{+}1) \ge (\min J_k) -k$,} which is a contradiction.
Thus  $k{+}1 \in \Z -K$.  It follows that  $\min J_{k{+}1} = k$.

By replacing $r$ with $\lsup{\overline a^k}{\mkern-2mu r}$, we may assume that $k=0$, and, hence, $\min J_1 =0$.
Let $\nu \coloneq \max J_1$.
Then $\nu \ge 0$ and
 $$r\in N_A{\ast}\,\,\lsup{a}{N_B}{\ast}\,\,\gp{\lsup{a^{[0{\uparrow}\nu]}}{(a\overline b)}}{\quad}
=N_A{\ast}\,\, \lsup{a\overline b}{N_B}{\ast}\,\,\gp{\lsup{a^{[0{\uparrow}\nu]}}{(a\overline b)}}{\quad}
 =N_A{\ast}\,\,  N_B{\ast}\,\, \gp{\lsup{a^{[0{\uparrow}\nu]}}{(a\overline b)}}{\quad}.$$

Since  $\min J_1 = 0$, no $N$-conjugate of
$r$ lies in
 $N_A{\ast}\,\,\lsup{a}N_B{\ast}\,\, \gp{\lsup{a^{[1{\uparrow} \nu ]}}{(a\overline b)}}{\quad}
=N_A{\ast}\,\,\lsup{a\overline b}N_B{\ast}\,\, \gp{\lsup{a^{[1{\uparrow} \nu ]}}{(a\overline b)}}{\quad}.$

We claim that no $N$-conjugate of
$r$ lies in $N_A{\ast}N_B{\ast}\,\, \gp{\lsup{a^{[0{\uparrow}(\nu{-}1)]}}{(a\overline b)}}{\quad}$.
The case of the claim where $\nu = 0$ holds because $J_0$ is non-empty.
The case of the claim where $\nu \ge 1$ holds because $\max J_1 =  \nu $ and, hence,
no $N$-conjugate of
$r$ lies in
$$N_A{\ast}\,\,\lsup{a}N_B {\ast}\,\, \gp{\lsup{a^{[0{\uparrow}(\nu{-}1)]}}{(a\overline b)}}{\quad}
= N_A{\ast}\,\,\lsup{a\overline b}N_B{\ast}\,\, \gp{\lsup{a^{[0{\uparrow}(\nu{-}1)]}}{(a\overline b)}}{\quad}
= N_A{\ast}N_B{\ast}\,\, \gp{\lsup{a^{[0{\uparrow}(\nu{-}1)]}}{(a\overline b)}}{\quad}.$$
This proves the claim.

By Remark~\ref{Rem:KS1},  $r$~is
Cohen-Lyndon aspherical in  \mbox{$N_A{\ast}  N_B
{\ast}\,\, \gp{\lsup{a^{[0{\uparrow}\nu]}}{\mkern-2mu (a\overline b)}}{\quad}$}.
By Corollary~\ref{Cor:Howie} and Theorem~\ref{Thm:KS1}, $r$~is  Cohen-Lyndon aspherical in
the HNN extension
$$  (N_A{\ast}N_B{\ast}\,\, \gp{\lsup{a^{[0{\uparrow}\nu]}}{(a\overline b)}}{\quad})
 \ast
(a \colon(N_A{\ast}N_B{\ast}\,\,\gp{\lsup{a^{[0{\uparrow}(\nu{-}1)]}}{(a\overline b)}}{\quad})\to
(N_A{\ast}\,\,\lsup{a\overline b}N_B{\ast}\,\, \gp{\lsup{a^{[1{\uparrow} \nu ]}}{(a\overline b)}}{\quad})),$$
which is~$ \gp{a}{\quad} \ltimes (N_A{\ast}N_B{\ast}\gp{\lsup{\gp{a}{\quad}}{(a\overline b)}}{\quad}) \,\,\,\,\,(= F)$.
\end{proof}

\begin{Cor}\label{Cor:EH}
Suppose that $F$ is  locally indicable  and that
 $T$ is an $F$-tree with trivial edge stabilizers and that $r$ is
an element of $F$ which fixes no vertex of $T$.

Let   $N$ be
a normal subgroup of $F$ such that $r\in N$ and $F/N$ is infinite and cyclic.
  If  $r$ is  Cohen-Lyndon aspherical in $N$, then $r$ is Cohen-Lyndon aspherical in~$F$.
\end{Cor}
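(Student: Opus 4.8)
The plan is to collapse $T$ down to a single edge orbit and then appeal to the free-product form of the Edjvet-Howie lemma, Lemma~\ref{Lem:EH}. Concretely, I would choose an edge $e \in \Eaxis(r)$ and let $\overline T$ be the $F$-tree obtained from $T$ by collapsing all edges in $ET - Fe$, exactly as at the start of the proof of Corollary~\ref{Cor:KS4}. Then $E\overline T = Fe$ is a single $F$-orbit, the edge stabilizers remain trivial, and $r$ still shifts $e$ and so fixes no vertex of $\overline T$. Crucially, the data $N \unlhd F$, $r \in N$, $F/N$ infinite cyclic, and ``$r$ is Cohen-Lyndon aspherical in $N$'' involve only $F$ and $N$ as abstract groups, not the tree, so they are untouched by the collapse. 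Thus I may assume from the outset that $ET = Fe$.

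Next I would read off the Bass-Serre structure. An $F$-tree with one edge orbit and trivial edge stabilizers presents $F$ as a free product: either $F = A \ast B$ when there are two vertex orbits, or $F = A \ast \gp{t}{\quad}$ with $\gp{t}{\quad}$ infinite cyclic when there is one vertex orbit (the HNN extension with trivial associated subgroups is simply a free product with $\Z$). In each case the free factors are subgroups of the locally indicable group $F$ and are hence locally indicable, and the elements fixing a vertex of $\overline T$ are exactly the $F$-conjugates of elements of the vertex groups ($A$ and $B$ in the first case, $A$ in the second).

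In the two-orbit case, the hypothesis that $r$ fixes no vertex says precisely that no $F$-conjugate of $r$ lies in $A \cup B$, so all the hypotheses of Lemma~\ref{Lem:EH} hold for $A$, $B$, $r$, $N$, and that lemma yields that $r$ is Cohen-Lyndon aspherical in $A \ast B = F$. In the one-orbit case, write $F = A \ast \gp{t}{\quad}$; fixing no vertex gives that no $F$-conjugate of $r$ lies in $A$. If moreover no $F$-conjugate of $r$ lies in $\gp{t}{\quad}$, then Lemma~\ref{Lem:EH} applies with $B \coloneq \gp{t}{\quad}$ and again gives the conclusion. Otherwise some $F$-conjugate of $r$ lies in $\gp{t}{\quad}$; since Cohen-Lyndon asphericity depends only on the conjugacy-closed set $\lsup{F}{\mkern-2mu r}$, I would replace $r$ by that conjugate and assume $r \in \gp{t}{\quad}$, whence $r$ is visibly Cohen-Lyndon aspherical in the infinite cyclic group $\gp{t}{\quad}$, and Remark~\ref{Rem:KS1}, applied to $\gp{t}{\quad} \ast A = F$, promotes this to Cohen-Lyndon asphericity in $F$.

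I expect the only genuine obstacle to be the one-vertex-orbit case, where Lemma~\ref{Lem:EH} cannot be invoked verbatim: $r$ may be conjugate into the infinite cyclic free factor $\gp{t}{\quad}$, violating the lemma's requirement that no conjugate of $r$ lie in a free factor. The remedy is the elementary observation above that such an $r$ is automatically Cohen-Lyndon aspherical inside $\gp{t}{\quad}$, so Remark~\ref{Rem:KS1} disposes of it without using $N$ at all. Everything else---that collapsing preserves trivial edge stabilizers and the property of fixing no vertex, and that ``fixes no vertex'' translates into ``no conjugate in the vertex-group factors''---is routine Bass-Serre bookkeeping.
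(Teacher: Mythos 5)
Your proposal is correct and takes essentially the same route as the paper's own proof: collapse $T$ to the single edge orbit $Fe$ with $e \in \Eaxis(r)$, treat the two-vertex-orbit case $F = A \ast B$ by Lemma~\ref{Lem:EH} directly, and in the one-vertex-orbit case $F = A \ast \gp{b}{\quad}$ either apply Lemma~\ref{Lem:EH} or, when $r$ is conjugate into $\gp{b}{\quad}$, observe it is trivially Cohen-Lyndon aspherical there and invoke Remark~\ref{Rem:KS1}. Even your closing observation that the hypothesis on $N$ plays no role in that exceptional subcase matches the paper's argument step for step.
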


\begin{proof}  The argument is similar to the proof of Corollary~\ref{Cor:KS4}.
Let $e$ be an edge in $\axis(r)$.  Let $\overline T$ denote the
$F$-tree obtained from $T$ by contracting all edges in $ET-Fe$.  Then $E\overline T = Fe$ and
$r$ shifts $e$.  Hence $r$ fixes no vertex of $\overline T$.  Thus we may replace $T$ with $\overline T$
and assume that $ET = Fe$.

\medskip

\noindent \textbf{Case 1.}  $\abs{F \leftmod  (V T )}=2$.

Here, we can write  $F = A{\ast}B$
and no $F $-conjugate of $r$  lies in   $A{\cup}B$.
Then, $r$ is Cohen-Lyndon aspherical in  $ F$  by Lemma~\ref{Lem:EH}.

\medskip

\noindent \textbf{Case 2.}  $\abs{F \leftmod  (V  T )}=1$.

Here, we can write $F  = A{\ast}(b \colon\{1\}\to\{1\})$ and  no $F $-conjugate of $r$  lies in $A$.

If no $F$-conjugate of $r$ lies in $\gp{b}{\quad}$, then
$r$ is Cohen-Lyndon aspherical in  $ A{\ast}\gp{b}{\quad} \,\,\,\,\,(= F) $ by Lemma~\ref{Lem:EH}.

 Thus we may assume that  $r$ itself lies in $\gp{b}{\quad}$  and
$r$ is then clearly Cohen-Lyndon aspherical in $\gp{b}{\quad}$.  Hence, $r$ is
Cohen-Lyndon aspherical in  $ A{\ast}\gp{b}{\quad} \,\,\,\,\,(= F )$,
by Remark~\ref{Rem:KS1}.

\medskip

 Thus, in all cases,  $r$ is  Cohen-Lyndon aspherical in  $ F $.
\end{proof}

\begin{Thm5}\label{thm:CL}   Suppose that $F$ is a locally indicable group
and that $T$ is an   $F$-tree  with trivial edge stabilizers.
If $r$ is  an element of $F$ that fixes no vertex of $T$,
 then $\mathbf{C}_F(r)$ is infinite and cyclic, and $r$   is  Cohen-Lyndon aspherical in $F$.
\end{Thm5}

\begin{proof}
Recall from Definitions~\ref{defs:tree} that $r$ has a unique root  in $F$
and that $\mathbf{C}_F(r) = \gen{\mkern-6mu \sqrt[F]{r}}$.

Let $S \coloneq \{\sqrt[F]{r}\,\}$ and $\Phi \coloneq \emptyset$.
We  may then assume that we are in Setting~\ref{Sett:1}, and  that
$v $ is a vertex in $\axis(\mkern-6mu \sqrt[F]{r})$.  Then $\gen{S}Y = \axis(\mkern-6mu \sqrt[F]{r}) = \axis(r)$.
 We shall use decreasing induction to show that, for each $n \in [0{\uparrow}\nu]$,
 $r$~is  Cohen-Lyndon aspherical in  $F_{n}$.

By Corollary~\ref{Cor:1}, $\lsup{F_{\nu} }{\mkern-2mu S}$ is a Whitehead subset of  $F_{\nu}$.  Since
$\abs{S} =1$,  $F_{\nu}$ is cyclic, and, hence, $\lsup{F_{\nu} }{\mkern-2mu S} = S$.
Thus, $F_{\nu} = \gp{\sqrt[F]{r}}{\quad}$.  It is then clear
that $r$ is Cohen-Lyndon aspherical in~$F_{\nu}$.

Now suppose that $n \in [0{\uparrow}(\nu{-}1)]$ and that $r$ is   Cohen-Lyndon aspherical in $ F_{n+1}$.

First, we wish to show that $r$ is Cohen-Lyndon aspherical in  $ F_n^\dag$.
This is trivial if \mbox{$F_n^\dag =F_{n+1}$,} and we
consider only the case where $F_n^\dag/F_{n+1}$ is infinite and cyclic.
Here,    $r$ is  Cohen-Lyndon aspherical in  $ F_n^\dag$ by Corollary~\ref{Cor:EH}.

We next want to show that  $r$ is Cohen-Lyndon aspherical in $F_n$.

Recall that $F_n^\dag$ is generated by
$\glue(F_n^\dag, Y) $ and that  \mbox{$\glue(F_n, EY) \subseteq F_n^\dag$.}
Now $$\glue(F_n^\dag, Y) \subseteq \glue(F_n^\dag, \axis(r))$$ and  $$\glue(F_n, \Eaxis(r)) =  \glue(F_n, \gen{S}(EY)) \subseteq \gen{S}\glue(F_n, EY)\gen{S} \subseteq F_n^\dag.$$
Here,  $r$ is  Cohen-Lyndon aspherical in $F_n$ by Corollary~\ref{Cor:KS4}.

By descending induction, $r$ is  Cohen-Lyndon aspherical in  $F_0\,\,\,\,(=F)$.
\end{proof}

\section{Applications}\label{sec:6}

We can combine Theorem~\ref{Thm:KS1}, Theorem~\ref{thm:CL} and Corollary~\ref{Cor:Howie}
to obtain a sufficient condition for a subset of $F$ to be
Cohen\d1Lyndon aspherical in $F$.

\begin{Thm}\label{Thm:big} Let
$(\mathcal{F}, Y)$  be   a graph of groups  with family of groups $(\mathcal{F}(y) \mid y \in Y)$
and family of edge maps $(\overline t_e \colon   \mathcal{F}(e) \to \mathcal{F}(\tau_Y e) \mid e \in EY)$,
let $Y_0$ be a maximal subtree  of~$Y$, and let $F$ be the  fundamental group of
$(\mathcal{F}, Y,Y_0)$.

Let $U$ be a subset of $VY$ and let $(r_v \mid v \in U)$ be a family of elements of $F$ with the property that,
for each $v \in U$,   the following hold.
\begin{enumerate}[{\hskip .6cm}\normalfont(a).]
 \item   $r_v \in \mathcal{F}(v)$ and $\mathcal{F}(v)$ is locally indicable  and
$\mathcal{F}(v)$  acts on some tree with trivial edge stabilizers and no vertex fixed by $r_v$.
 \item For each $e \in \iota_Y^{-1} (\{v\})$, $\mathcal{F}(e)$ is a free factor of
$\mathcal{F}(v)$ that contains no $\mathcal{F}(v)$-conjugate of~$r_v$.
 \item For each $e \in \tau_Y^{-1} (\{v\})$,
$ \lsup{\overline t_e}{\mathcal{F}(e)}$ is a free factor of
$\mathcal{F}(v)$ that contains
no $\mathcal{F}(v)$-conjugate of~$r_v$.
\end{enumerate}
 Then  $(r_v \mid v \in U)$ is Cohen-Lyndon aspherical in $F$. \hfill\qed
\end{Thm}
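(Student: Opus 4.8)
The plan is to reduce Theorem~\ref{Thm:big} to Theorem~\ref{Thm:KS1} by first passing from the indexing set $U$ to the whole vertex set $VY$. I would extend the given family to $R \coloneq (r_v \mid v \in VY)$ by setting $r_v \coloneq 1$ for every $v \in VY - U$. For $v \in U$, hypothesis~(a) furnishes a tree on which $\mathcal{F}(v)$ acts with no vertex fixed by $r_v$; since a tree has vertices and the identity fixes them all, this forces $r_v \ne 1$. Hence $U = \{v \in VY \mid r_v \ne 1\}$, which is exactly the set appearing in the conclusion of Theorem~\ref{Thm:KS1}, so that its output reads as the desired statement about $(r_v \mid v \in U)$.

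First I would verify the hypotheses of Theorem~\ref{Thm:KS1}, namely that $\gen{\lsup{\mathcal{F}(v)}{r_v}} \cap \mathcal{F}(e) = \{1\}$ for each $e \in \iota_Y^{-1}(\{v\})$ and $\gen{\lsup{\mathcal{F}(v)}{r_v}} \cap \lsup{\overline t_e}{\mathcal{F}(e)} = \{1\}$ for each $e \in \tau_Y^{-1}(\{v\})$. For $v \in VY - U$ these hold trivially because $r_v = 1$. For $v \in U$, I would deduce them from (b) and (c) via Corollary~\ref{Cor:Howie}. Consider (b): writing $\mathcal{F}(v) = \mathcal{F}(e) {\ast} K$ for a complementary free factor $K$, both factors are locally indicable as subgroups of the locally indicable group $\mathcal{F}(v)$, and, since $\mathcal{F}(e)$ contains no $\mathcal{F}(v)$-conjugate of $r_v$, Corollary~\ref{Cor:Howie} embeds $\mathcal{F}(e)$ in $\mathcal{F}(v)/\normgen{r_v}$. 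As $\normgen{r_v} = \gen{\lsup{\mathcal{F}(v)}{r_v}}$, the kernel of this natural map is precisely $\mathcal{F}(e) \cap \gen{\lsup{\mathcal{F}(v)}{r_v}}$, which is therefore trivial. The identical argument applied to the free factor $\lsup{\overline t_e}{\mathcal{F}(e)}$ disposes of (c).

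Granting these intersection conditions, Theorem~\ref{Thm:KS1} applies and reduces the global claim to a purely local one: $\{r_u \mid u \in U\}$ is Cohen-Lyndon aspherical in $F$ if and only if $r_u$ is Cohen-Lyndon aspherical in $\mathcal{F}(u)$ for each $u \in U$. The local statements follow from hypothesis~(a) together with Theorem~\ref{thm:CL}, since each $\mathcal{F}(u)$ is locally indicable and acts on a tree with trivial edge stabilizers in which $r_u$ fixes no vertex. Combining the two directions completes the reduction; the passage between the family $(r_v \mid v \in U)$ and the set $\{r_u \mid u \in U\}$ is harmless, because Cohen-Lyndon asphericity already entails that distinct indices yield $F$-nonconjugate, hence distinct, elements.

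The main obstacle is the translation of the free-factor hypotheses (b) and (c) into the triviality of the intersections required by Theorem~\ref{Thm:KS1}. The crux is to recognize that the injectivity of the natural map $\mathcal{F}(e) \to \mathcal{F}(v)/\normgen{r_v}$ supplied by Corollary~\ref{Cor:Howie} is literally the assertion that $\gen{\lsup{\mathcal{F}(v)}{r_v}}$ meets $\mathcal{F}(e)$ trivially, and to record that free factors of a locally indicable group are again locally indicable so that Corollary~\ref{Cor:Howie} is actually available. Everything else is bookkeeping over the partition $VY = U \vee (VY - U)$.
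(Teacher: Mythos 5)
Your proposal is correct and takes essentially the same route as the paper, which proves Theorem~\ref{Thm:big} precisely by combining Theorem~\ref{Thm:KS1}, Theorem~\ref{thm:CL} and Corollary~\ref{Cor:Howie}, leaving the details to the reader. Your bookkeeping --- extending the family by $r_v \coloneq 1$ for $v \in VY - U$, translating hypotheses (b) and (c) into the trivial-intersection hypotheses of Theorem~\ref{Thm:KS1} via Corollary~\ref{Cor:Howie} applied to the free-product decomposition $\mathcal{F}(v) = \mathcal{F}(e) \ast K$, and invoking Theorem~\ref{thm:CL} for the local asphericity in each $\mathcal{F}(u)$ --- is exactly the intended argument.
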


We shall consider only the case that corresponds to the strongly staggered conditions.

\begin{Cor}\label{Cor:big} Suppose that $F$ is a locally indicable group and that $R$~is
a subset of~$F$.  If there exists some
  $F$-tree $T$ with trivial edge stabilizers such that   $R$ is strongly $T$-staggerable modulo $F$, then $R$ is
Cohen-Lyndon aspherical in   $F$.
\end{Cor}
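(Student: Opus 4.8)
The plan is to deduce Corollary~\ref{Cor:big} from Theorem~\ref{Thm:big} by converting the strongly staggered data for $R$ into a graph-of-groups decomposition of $F$ to which Theorem~\ref{Thm:big} applies. First I would fix an ordering $<$ of $ET$ witnessing that $R$ is strongly $(T,<)$-staggered modulo $F$. By (S4) no two distinct elements of $R$ are $F$-conjugate, so $R$ itself is a set of representatives of the relevant conjugacy classes, and by (S3) these representatives are totally ordered: for distinct $r$, $r' \in R$ the pairs $(\min,\max)$ of the finite sets $F\leftmod (F(\Eaxis(r)))$ and $F\leftmod (F(\Eaxis(r')))$ in $(F\leftmod ET, <)$ are strictly nested. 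Hence I may list $R = (r_1, r_2, \ldots)$ with strictly increasing $a_i \coloneq \min (F\leftmod (F(\Eaxis(r_i))))$ and $b_i \coloneq \max (F\leftmod (F(\Eaxis(r_i))))$. Conditions (S5), (S6), (S7) make $F\leftmod T$ line-like with totally ordered edges, any two consecutive ones meeting in a vertex; I use this to choose, for each $i$, a connected sub-segment $B_i$ of $F\leftmod T$ whose edge set is an interval of $(F\leftmod ET,<)$ containing $[a_i{\uparrow}b_i]$, arranged so that the $B_i$ cover $F\leftmod ET$ (enlarging the extreme blocks, and inserting relator-free vertices over any uncovered edge-orbits, if necessary) and so that consecutive blocks $B_i$, $B_{i+1}$ meet exactly in a terminal sub-segment.

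Next I would promote these blocks to a graph of groups. For each $i$ let $\tilde B_i$ be the component of the preimage in $T$ of the edges of $B_i$ that contains $\axis(r_i)$; this makes sense because $\axis(r_i)$ is connected and all of its edge-orbits lie in $[a_i{\uparrow}b_i] \subseteq B_i$. Let $V_i$ be the $F$-stabilizer of $\tilde B_i$, and let the overlaps $B_i \cap B_{i+1}$ supply the edge subtrees and their stabilizers. This data assembles into a graph of groups $(\mathcal{F}, Y)$ over a path (or line) $Y$ with $F = \pi_1(\mathcal{F}, Y)$, vertex groups $\mathcal{F}(v_i) = V_i \ni r_i$, and edge groups the overlap stabilizers. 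I would then verify the hypotheses of Theorem~\ref{Thm:big}, with $U$ indexing $R$ and $r_{v_i} = r_i$. For (a): each $V_i$ is a subgroup of the locally indicable group $F$, hence locally indicable, and it acts on the subtree $\tilde B_i$ of $T$ with trivial edge stabilizers (inherited from $T$) and with $r_i$ fixing no vertex (inherited from the hypothesis that $r_i$ fixes no vertex of $T$, its axis lying in $\tilde B_i$). For (b) and (c): since $ET$ is $F$-free, the fundamental group of any sub-segment of $\mathcal{F}$ is a free product of vertex groups together with a free group, so the overlap group between $V_i$ and an adjacent vertex group, being the fundamental group of a terminal sub-segment of $B_i$, is a free factor of $V_i$.

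The main obstacle, and the place where the full strength of the strongly staggered conditions is used, is the verification that these edge groups contain no $\mathcal{F}(v_i)$-conjugate of $r_i$, together with the check that the block data really does reconstitute $F = \pi_1(\mathcal{F}, Y)$. The former is exactly the strict nesting in (S3): because $a_i$ lies strictly below, and $b_i$ strictly above, the edge-orbits occurring in the overlaps $B_{i-1} \cap B_i$ and $B_i \cap B_{i+1}$, the axis of $r_i$ must use edges outside each overlap subtree, so $r_i$ is not conjugate in $V_i$ into the corresponding free factor. The latter is Bass-Serre bookkeeping that turns the line of overlapping subtrees $\tilde B_i$ into the tree underlying $(\mathcal{F}, Y)$, with edge stabilizers the overlap groups; conditions (S5), (S6), (S7) are designed precisely to make $F\leftmod T$ line-like so that this bookkeeping goes through. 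Once these points are settled, Theorem~\ref{Thm:big} yields that $(r_i) = R$ is Cohen-Lyndon aspherical in $F$, which is the assertion of Corollary~\ref{Cor:big}; note that condition (a) is exactly what lets Theorem~\ref{Thm:big} feed each $r_i \in \mathcal{F}(v_i)$ into Theorem~\ref{thm:CL} at the level of the individual vertex group.
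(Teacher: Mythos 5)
Your proposal is correct and takes essentially the same route as the paper's proof: the paper likewise uses the Bass-Serre decomposition of $F$ over the line-like quotient $F\leftmod T$, forms blocks $F_{[\mu_n{\uparrow}\nu_n]}$ from the intervals spanned by $\min$ and $\max$ of $F\leftmod (F(\Eaxis(r_n)))$, takes overlaps as edge groups, uses the strict nesting of (S3) to exclude $F$-conjugates of $r_n$ from the incident edge groups, and concludes via Theorem~\ref{Thm:big}. One phrasing slip worth noting: the correct pairing is that $b_i$ lies strictly above the orbits occurring in $B_{i-1}\cap B_i$ while $a_i$ lies strictly below those occurring in $B_i\cap B_{i+1}$ (the overlap $B_{i-1}\cap B_i$ may well contain the orbit $a_i$, and $B_i\cap B_{i+1}$ the orbit $b_i$), but this crossed pairing is exactly the fact your argument actually uses, so nothing breaks.
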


Cohen and Lyndon~\cite[Corollary~2.2]{CohenLyndon} proved
the case of this result where $F$ is free and $\abs{F\leftmod VT}=1$.

\begin{proof}[Proof of Corollary~\ref{Cor:big}]
Applied to $F$ acting on $T$, the Bass-Serre Structure Theorem presents $F$
as the fundamental group of a certain graph of groups
$(\mathcal{F},Y,Y_0)$.  Let
$<$ be an ordering of $ET$ such that $R$ is strongly $(T,<)$-staggered modulo~$F$.
Here $EY = F\leftmod ET$ and, by the staggered conditions,
the ordering $<$ of $ET$ induces an ordering, again denoted by~$<$, of
 $EY$.
By the strongly staggered conditions,
$(EY, <)$ is order isomorphic to an interval in~$\Z$.
Let us consider only the case where \mbox{$(EY, <)$} is order isomorphic to $\Z$;
the case where \mbox{$(EY, <)$}
 is order isomorphic to a proper interval in $\Z$  is handled in a similar way.
By choosing an order isomorphism we get an  indexing
 $e_\Z$   of the elements of $EY$.

For any interval $I $ in $ \Z$, $e_I$ is an interval in $e_\Z$, and we
 let $Y_I$ denote the subgraph of $Y$ with edge set~$e_I$
together with the vertices of $Y$ that are incident to these edges.  By the
strongly staggered conditions, $Y_I$ is connected and $Y_0 \cap Y_I$ is the unique maximal subtree of~$Y_I$.
Then the graph of groups obtained by restricting $\mathcal{F}$ to $Y_I$ has a well-defined
fundamental group which we denote by $F_I$.  It can be seen that $F_I$ is a
free factor of~$F$.

By the staggered conditions, the ordering $<$ of $EY$ induces an ordering, again denoted by~$<$, of $R$.
Let us consider only the case where $R$ is order isomorphic to $\Z$; the case where
$R$ is order isomorphic to a proper interval in $\Z$  is handled in a similar way.
By choosing an order isomorphism we get an indexing $r_\Z$ of the elements of $R$.
For each $n \in \Z$, we   set
$\mu_n \coloneq \min (F\leftmod (F(\Eaxis(r_n)))) $ and $\nu_n \coloneq \max(F\leftmod (F(\Eaxis(r_n))))$.
 Clearly   $\mu_n \le \nu_n$.  The staggered conditions imply that
 $\mu_n < \mu_{n+1}$ and $\nu_n < \nu_{n+1}$.

We can then  express $F$ as the fundamental group of a second graph of groups whose family of
vertex groups is $(F_{[\mu_n{\uparrow} \nu_n]} \mid n \in \Z)$ and whose family of edge groups is
  $(F_{[\mu_n{\uparrow} \nu_n]} \cap F_{[\mu_{n+1}{\uparrow} \nu_{n+1}]} \mid n \in \Z),$
with the edge maps being those suggested by the intersection notation.  Here the underlying graph has the
form of the real line.

For each $n \in \Z$,  some $F$-conjugate of $r_n$ lies in $ F_{[\mu_n{\uparrow} \nu_n]}$ and we may assume that
$r_n$ itself lies in the vertex group $ F_{[\mu_n{\uparrow} \nu_n]}$.
By the definition of $\mu_n$ and $\nu_n$ and the staggered conditions, no $F$-conjugate of $r_n$
lies in either of the two incident-edge groups.

By Theorem~\ref{Thm:big}, $R$ is Cohen-Lyndon aspherical in $F$.
\end{proof}

Cohen-Lyndon asphericity has many applications, and we  conclude by mentioning two of them.

\begin{Sett}\label{Sett:3} Suppose that $F$ is locally indicable, and that $T$ is an
$F$-tree with trivial edge stabilizers, and that
$R_0$ is a subset of~$F$  which
 is  strongly $T$-staggerable modulo~$F$.

By Definitions~\ref{defs:tree}, $R_0$ has unique roots in $F$ and $\sqrt[F]{R_0}$ is
strongly $T$-staggerable modulo~$F$.
For each $r \in R_0$, we have $\mathbf{C}_F(r) = \gen{\mkern-6mu \sqrt[F]{r}}$.

  By  Corollary~\ref{Cor:big}, $R_0$ and $\sqrt[F]{R_0}$
are Cohen-Lyndon aspherical in $F$.

Let \mbox{$G \coloneq F/\normgen{R_0}$.}
For each $r \in R_0$, let $G_r \coloneq \mathbf{C}_F(r)/\gen{r} = \gp{\sqrt[F]{r}}{r}$,
a finite, cyclic subgroup of $G$.

 Let $(\,Y_r\mid r \in R_0)$ be
 a family of subsets of $F$ with the properties that, for each $r \in R_0$, $Y_r$ is a transversal
for the  $ \gen{\lsup{F}{\mkern-2mu (\sqrt[F]{R_0})}}$-action on~$F$ by multiplication on the left (or right) and
$\mathop{\bigcup}\limits_{r\in R_0} (\lsup{\,Y_r}{(\sqrt[F]{r})})$\vspace{-2mm}  is  a free-generating set of
$\gen{\lsup{F}{\mkern-2mu (\sqrt[F]{R_0})}}$.

 Let $(X_r\mid r \in R_0)$ be a family of subsets of $F$ with the properties that, for each $r \in R_0$, $X_r$ is
a transversal for the  $(\gen{\lsup{F}{\mkern-2mu R}}\gen{\sqrt[F]{r}})$-action on~$F$
 by multiplication on the right  and
$\mathop{\bigcup}\limits_{r\in R_0} (\lsup{X_r\,}{r})$
  is  a free-generating set of~$\gen{\lsup{F}{\mkern-2mu R_0}}$.
For each element $v$ in the $G$-graph $\gen{\lsup{F}{\mkern-2mu R_0}}\leftmod T$,
let $G_v$ denote the $G$-stabilizer of $v$.  Also, let $E_0$, resp. $V_0$, be a
transversal for the  natural $G$-action  on
$\gen{\lsup{F}{\mkern-2mu R_0}}\leftmod ET$, resp.
$\gen{\lsup{F}{\mkern-2mu R_0}}\leftmod VT$.  Here, $F$ is the free product of a
free group and the group $\mathop{\ast}\limits_{v\in V_0} G_v$.
\hfill\qed
\end{Sett}

We first discuss torsion.

\begin{Cor}\label{Cor1}   In {\normalfont Setting~\ref{Sett:3}},
let $N$ denote the  $($normal\,$)$ subgroup of $G$ generated by the elements of $G$ of finite order.
Then the following hold.
\begin{enumerate}[{\normalfont (i).}]
\item For each $r \in R_0$, $Y_r$ may be viewed as a transversal for the
$N$-action on~$G$ by multiplication on the left,
and \mbox{$N
=\mathop{\ast}\limits_{ r \in  R_0}\mathop{\ast}\limits_{y\in Y_r} \lsup{y}{G_r}$,}\vspace{-1.6mm}
and, hence, $N$ is a free product of finite, cyclic groups.
\item $G/N$ is locally indicable, and every torsion-free subgroup of $G$ is locally indicable.
\item Each non-trivial, torsion subgroup of $G$  lies in exactly one of the $($finite, cyclic\,$)$
subgroups of $G$ of the form   $  \lsup{g}{G_r} $,     $r \in R_0$,  $g\in G$,
and  then $r$ is unique and the coset $gG_r \in G/G_r$ is unique.
\end{enumerate}
\end{Cor}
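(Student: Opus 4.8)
The plan is to reduce everything to the single structural fact that $N$ is the free product of the finite cyclic groups $\lsup{y}{G_r}$, after which (i)--(iii) follow from the Kurosh subgroup theorem and the elementary behaviour of torsion and local indicability in free products. Write $s_r \coloneq \sqrt[F]{r}$ and let $m_r$ be the order of $G_r$, so that $r = s_r^{m_r}$; set $P \coloneq \gen{\lsup{F}{(\sqrt[F]{R_0})}}$ and recall $\normgen{R_0} = \gen{\lsup{F}{\mkern-2mu R_0}} \le P$, the inclusion holding since $\lsup{f}{r} = (\lsup{f}{s_r})^{m_r}$. Because $\sqrt[F]{R_0}$ is Cohen-Lyndon aspherical in $F$ by Corollary~\ref{Cor:big}, $P$ is free with free-generating set $\bigcup_{r} \lsup{Y_r}{s_r}$. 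Using that $Y_r$ is a transversal for $P$ acting on $F$, every $\lsup{f}{r}$ is a $P$-conjugate of some $\lsup{y}{r} = (\lsup{y}{s_r})^{m_r}$, so $\normgen{R_0}$ is the normal closure in the free group $P$ of the $m_r$-th powers of the free generators; killing these powers presents $P/\normgen{R_0}$ as the free product $\mathop{\ast}_{r\in R_0}\mathop{\ast}_{y\in Y_r}\lsup{y}{G_r}$, the factor indexed by $(r,y)$ being the image of $\gen{\lsup{y}{s_r}} = \lsup{y}{\mathbf{C}_F(r)}$, namely $\lsup{y}{G_r}$. Since $P \unlhd F$, this subgroup is normal in $G$, and $G/(P/\normgen{R_0}) = F/\normgen{\sqrt[F]{R_0}}$ is locally indicable, hence torsion-free, by Theorem~\ref{Thm:1}(ii). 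Thus every torsion element of $G$ lies in $P/\normgen{R_0}$, while $P/\normgen{R_0}$ is generated by the torsion elements $\lsup{y}{G_r}$; the two inclusions give $N = P/\normgen{R_0}$, and the transversal assertion holds because the $N$-cosets in $G$ are the images of the $P$-cosets in $F$. This is (i).

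For (ii), I would note $G/N = F/\normgen{\sqrt[F]{R_0}}$ is locally indicable by Theorem~\ref{Thm:1}(ii). Given a torsion-free subgroup $H \le G$, the intersection $H \cap N$ is a torsion-free subgroup of a free product of finite cyclic groups, hence free by the Kurosh subgroup theorem, and therefore locally indicable; moreover $H/(H\cap N) \cong HN/N$ embeds in $G/N$ and so is locally indicable. Since local indicability is closed under extensions---a finitely generable subgroup of $H$ either lies in the locally indicable $H\cap N$, or surjects onto a nontrivial finitely generable subgroup of the locally indicable $H/(H\cap N)$, and in either case admits an infinite cyclic quotient or is trivial---it follows that $H$ is locally indicable.

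Finally, for (iii), let $\Theta$ be a nontrivial torsion subgroup of $G$; then $\Theta \le N$ since $G/N$ is torsion-free. By the Kurosh subgroup theorem a torsion subgroup of the free product $N$ has trivial free part and cannot meet two inequivalent conjugates of factors (their join would contain an element of infinite order), so $\Theta$ lies in a single conjugate $\lsup{n}{(\lsup{y}{G_r})} = \lsup{g}{G_r}$ with $g = ny \in G$. For the uniqueness of the pair $(r, gG_r)$ I would use the standard fact that distinct conjugates of factors in a free product intersect trivially: every $G$-conjugate $\lsup{g}{G_r}$ is an $N$-conjugate of the factor corresponding to $r$ (decompose a representative of $g$ through the transversal $Y_r$), so a nontrivial $\Theta \le \lsup{g}{G_r} \cap \lsup{g'}{G_{r'}}$ forces $r = r'$ and, via $N_G(G_r) = G_r$, the equality $gG_r = g'G_r$.

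The \emph{main obstacle} is the structural identification carried out in the first paragraph: verifying cleanly that $\normgen{R_0}$ is exactly the normal closure in the free group $P$ of the $m_r$-th powers of the chosen free generators $\lsup{y}{s_r}$. This is the one point where both Cohen-Lyndon asphericities---of $R_0$ and of $\sqrt[F]{R_0}$---and the compatibility of the two families of transversals $(X_r)$ and $(Y_r)$ from Setting~\ref{Sett:3} must be used in tandem; once it is established, parts (i)--(iii) are routine consequences of free-product theory.
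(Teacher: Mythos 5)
Your proposal is correct and follows essentially the same route as the paper's proof: both identify $N$ with $\gen{\lsup{F}{\mkern-2mu(\sqrt[F]{R_0})}}/\gen{\lsup{F}{\mkern-2mu R_0}}$ by combining the torsion-freeness of $F/\normgen{\mkern-6mu\sqrt[F]{R_0}}$ from Theorem~\ref{Thm:1}(ii) with the presentation of that quotient as a free product of finite cyclic groups, the key step being exactly your observation that, since $Y_r$ is a transversal for the $\gen{\lsup{F}{\mkern-2mu(\sqrt[F]{R_0})}}$-action, every $\lsup{f}{r}$ is a $\gen{\lsup{F}{\mkern-2mu(\sqrt[F]{R_0})}}$-conjugate of some $\lsup{y}{r}$, after which (ii) and (iii) follow from standard free-product facts as you describe. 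Two cosmetic remarks: the Cohen-Lyndon asphericity of $R_0$ itself and the family $(X_r)$ are not actually needed for this corollary (the paper invokes them only in Corollary~\ref{Cor2}), so the ``tandem'' concern in your final paragraph is unfounded, and in (iii) the unproved aside $\mathbf{N}_G(G_r)=G_r$ is dispensable, since your transversal decomposition $g=ny$ with $y$ in the image of $Y_r$ already yields the uniqueness of $r$ and of the coset $gG_r$.
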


In the case where $F$ acts freely on $T$, and $\abs{R}=1$, (iii) can be attributed to Magnus and Lyndon~\cite{Lyndon},
 (i)~is due to Fischer, Karrass and Solitar~\cite{FKS}, and (ii) can be attributed to  Brodski\u{\i}~\cite{Brodskii84}.

In the case where $F\leftmod T$ has one edge and two vertices, and $\abs{R}=1$,
these results can be attributed  to Edjvet and Howie.

\begin{proof}[Proof of Corollary~\ref{Cor1}]
Let $\overline G \coloneq F/\normgen{\sqrt[F]{R_0}}$.

 By Theorem~\ref{Thm:1}(ii), $\overline G$ is
locally indicable, and, in particular, $\overline G$ is torsion free.

Let $N_1 \coloneq \gen{\lsup{F}{(\sqrt[F]{R_0}})}/\gen{{\lsup{F}{R_0}}}$.
Then $N_1 \trianglelefteq G$ and $G/N_1 = \overline G$.
Since  $\overline G$  is torsion free, every element of $G$ of finite order lies in $N_1$,
that is, $N \le N_1$.
Also, for each $r \in R_0$, the (faithful) image of $Y_r$ in $G$  is a
transversal for the   $N_1$-action on~$G$ by multiplication on the left.

Since $\mathop{\bigcup}\limits_{r\in R_0} (\lsup{(\gen{\lsup{F}{\mkern-2mu (\sqrt[F]{R_0})}}Y_r)}{r})
=\mathop{\bigcup}\limits_{r\in R_0} (\lsup{F}{r})  = \lsup{F}{\mkern-2mu R_0}$, it follows that
$$N_1= \gen{\lsup{F}{\mkern-2mu(\sqrt[F]{R_0}\,\,)}}/\gen{\lsup{F}{\mkern-2mu R_0}}
=\gp{\mathop{\textstyle\bigvee}\limits_{r\in R_0}
(\lsup{Y_r}{(\sqrt[F]{r})})}{\mathop{\textstyle\bigvee}\limits_{r\in R_0} (\lsup{Y_r}{r})}
=\mathop{\ast}\limits_{r\in  R_0} \mathop{\ast}\limits_{y\in Y_r} \lsup{y}{\gp{\sqrt[F]{r}}{r}}
=\mathop{\ast}\limits_{r\in  R_0} \mathop{\ast}\limits_{y\in Y_r} \lsup{y}{G_r}.$$
Thus, $N_1$ is a free product of finite, cyclic groups.
In particular, $N_1$ is generated by some set of elements of finite order in $G$, that is, $N_1 \le N$.
Hence $N_1=N$ and (i) holds.  Also, every torsion-free subgroup of $N$ is free.  Hence (ii) holds.

Any non-trivial, torsion subgroup $H$ of $G$ lies in $N$, and, by well-known properties of free products,
there exists some $n \in N$ such that
$H \le \lsup{\,ny}{G_r}$  for a unique $r\in R_0$ and a unique $y \in Y_r$, and here the coset
$n\lsup{\,y}{G_r} \in N/\lsup{\,y}{G_r}$ is unique.
 Since the (faithful) image of $Y_r$ in $G$ is a transversal for the $N$-action on~$G$
by multiplication on the left,
we see that (iii) holds.
\end{proof}

We now consider exact sequences and  homology groups.

\begin{Cor}\label{Cor2} In  {\normalfont Setting~\ref{Sett:3}},
the following hold.
\begin{enumerate}[{\normalfont (i).}]
\item The left $F$-action  on $\gen{\lsup{F}{\mkern-2mu R_0}}$ by  conjugation
induces a left $G$-action  on $\gen{\lsup{F}{\mkern-2mu R_0}}^{\text{ab}}$, and
 $\gen{\lsup{F}{\mkern-2mu R_0}}^{\text{ab}}$ is then a left $\Z G$-module that is naturally isomorphic to
$\mathop{\bigoplus}\limits_{r\in R_0} \Z[ G/G_r]$.
\item There exist  natural exact sequences of
left $\Z G$-modules that have the form
\begin{equation}\label{eq:1}
0 \to \textstyle \mathop{\bigoplus}\limits_{r \in R_0} \Z[G/G_r] \to \mathop{\bigoplus}\limits_{e\in E_0} \Z[G]\to
\mathop{\bigoplus}\limits_{v\in V_0} \Z[G/G_v]  \to \Z \to 0,
\end{equation}
\begin{equation}\label{eq:2}
0 \to \textstyle \mathop{\bigoplus}\limits_{r \in R_0} \Z[G]
\to \mathop{\bigoplus}\limits_{e \in R_0 \vee E_0} \Z[G]  \to
\mathop{\bigoplus}\limits_{v \in R_0 \vee V_0} \Z[G/G_v]   \to \Z \to 0.
\end{equation}
\item For each $n \in [3{\uparrow}\infty[\,$, the  change-of-groups natural transformation
$\textstyle \mathop{\bigoplus}\limits_{v \in R_0 \vee V_0}
\mathbf{H}_n(G_v,-)  \mkern8mu \to  \mkern8mu \mathbf{H}_n(G,-)  $,
between functors from the category of right  $\Z G$-modules to
the category of abelian groups, is an isomorphism of functors,
and the  change-of-groups natural transformation
$\mathbf{H}^n(G,-)
 \mkern8mu \to  \mkern8mu
\textstyle \mathop{\prod}\limits_{v \in R_0 \vee V_0} \mathbf{H}^n(G_v,-) $,
between functors from the category of left  $\Z G$-modules to
the category of abelian groups,  is an isomorphism of functors.
\end{enumerate}
\end{Cor}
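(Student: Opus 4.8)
The plan is to run all three parts off a single input from Setting~\ref{Sett:3}: the group $N \coloneq \gen{\lsup{F}{\mkern-2mu R_0}}$ is \emph{free}, freely generated by the family $\bigcup_{r\in R_0}\lsup{X_r}{\mkern-2mu r}$, and $N$ acts freely on $T$ (Theorem~\ref{Thm:1}(i)); the free basis is exactly the content of the Cohen-Lyndon asphericity of $R_0$ provided by Corollary~\ref{Cor:big}, and for each $r$ we have $\mathbf{C}_F(r)\cap N = \gen{r}$ by Corollary~\ref{Cor:vertex}.

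For (i), I would first note that $F$ acts on $N$ by conjugation, that the inner automorphisms induced by $N$ act trivially on $N^{\text{ab}}$, and hence that the $F$-action descends to the asserted $G$-action on $N^{\text{ab}}$. Since $N$ is free on $\bigcup_r\lsup{X_r}{\mkern-2mu r}$, the abelianization $N^{\text{ab}}$ is free abelian on the images $[\lsup{x}{\mkern-2mu r}]$ ($r\in R_0$, $x\in X_r$). The only real point is to compute the action on this basis: for $f\in F$ and $x\in X_r$ write $fx = x'hc$ with $x'\in X_r$, $h\in N$ and $c\in\mathbf{C}_F(r)$ (using that $X_r$ is a transversal for the right $N\mathbf{C}_F(r)$-action on $F$ and that $N$ is normal); then $\lsup{fx}{\mkern-2mu r} = \lsup{x'h}{\mkern-2mu r}$ as $c$ centralizes $r$, and since $\lsup{x'h}{\mkern-2mu r}\,\overline{\lsup{x'}{\mkern-2mu r}} = \lsup{x'}{\mkern-2mu[h,r]}$ lies in the characteristic subgroup $N'$, one gets $f\cdot[\lsup{x}{\mkern-2mu r}] = [\lsup{x'}{\mkern-2mu r}]$. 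Because the image of $N\mathbf{C}_F(r)$ in $G$ is exactly $G_r$, the bijection $X_r\to G/G_r$ is $G$-equivariant, so $G$ permutes the basis as $\bigsqcup_r G/G_r$ and $N^{\text{ab}}\cong\bigoplus_{r}\Z[G/G_r]$ as $\Z G$-modules.

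For the sequence~\eqref{eq:1} in (ii) I would pass to the quotient graph $\overline T\coloneq N\leftmod T$. As $N$ acts freely on $T$, the graph $\overline T$ is connected and carries a $G$-action with $\pi_1(\overline T)\cong N$; the trivial edge stabilizers persist in $G$, so $\Z[E\overline T]\cong\bigoplus_{e\in E_0}\Z[G]$, while the vertex stabilizers are the $G_v$, so $\Z[V\overline T]\cong\bigoplus_{v\in V_0}\Z[G/G_v]$. The augmented cellular chain complex of $\overline T$ is exact except at the edge term, where its homology is $H_1(\overline T)\cong N^{\text{ab}}$ compatibly with the $G$-action of (i); inserting (i) yields precisely~\eqref{eq:1}. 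The main obstacle is~\eqref{eq:2}, which I would realize as the equivariant cellular chain complex of a contractible $2$-complex $W$ built from $\overline T$. As a space, $W$ is $\overline T$ with one $2$-disk attached along the loop representing each free generator $\lsup{x}{\mkern-2mu r}$ of $N = \pi_1(\overline T)$; since these loops form a free basis of $\pi_1(\overline T)$, their classes form a $\Z$-basis of $H_1(\overline T) = Z_1(\overline T)$, so the attaching maps kill $\pi_1$ and make $\partial_2$ an isomorphism onto $Z_1(\overline T)$, whence $W$ is simply connected with $\widetilde{H}_*(W) = 0$ and is therefore contractible. To read off the $\Z G$-structure I would use a $G$-equivariant subdivision: each disk is attached along $r = (\sqrt[F]{r})^{m_r}$, which has an $m_r$-fold rotational $G_r$-symmetry, so coning it from a new central vertex $w_r$ fixed by $G_r$ splits it into free $G$-orbits of radial edges and of sectors, while the $w_r$ contribute the orbit $G/G_r$. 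This gives $C_0 = \bigoplus_{R_0\vee V_0}\Z[G/G_v]$, $C_1 = \bigoplus_{R_0\vee E_0}\Z[G]$ and $C_2 = \bigoplus_{R_0}\Z[G]$, so the augmented chain complex of the contractible $W$ is exactly~\eqref{eq:2}. The indispensable input is again Cohen-Lyndon asphericity, which is what forces $N$ to be free on the loops being coned off and hence $W$ to have no $2$-dimensional homology.

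For (iii), I would read~\eqref{eq:2} as a resolution $0\to P_3\to P_2\to P_1\to\Z\to 0$ in which $P_2 = \bigoplus_{R_0\vee E_0}\Z[G]$ and $P_3 = \bigoplus_{R_0}\Z[G]$ are $\Z G$-free while $P_1 = \bigoplus_{R_0\vee V_0}\Z[G/G_v]$ is a sum of induced modules $\Z[G/G_v] = \Z G\otimes_{\Z G_v}\Z$. Splitting off $Z_1\coloneq\ker(P_1\to\Z)$ gives $0\to P_3\to P_2\to Z_1\to 0$, so $Z_1$ has projective dimension at most $1$ and $\Tor^{\Z G}_n(M,Z_1) = 0 = \operatorname{Ext}^n_{\Z G}(Z_1,M)$ for $n\ge 2$. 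Feeding this into the long exact sequences of $0\to Z_1\to P_1\to\Z\to 0$, and applying Shapiro's lemma in the forms $\Tor^{\Z G}_n(M,\Z[G/G_v])\cong\mathbf{H}_n(G_v,M)$ and $\operatorname{Ext}^n_{\Z G}(\Z[G/G_v],M)\cong\mathbf{H}^n(G_v,M)$ (valid since $\Z G$ is free over each $\Z G_v$, so induction is exact and preserves projectives), I would conclude that the maps induced by the augmentations $\Z[G/G_v]\to\Z$ furnish isomorphisms $\bigoplus_{v}\mathbf{H}_n(G_v,-)\xrightarrow{\ \sim\ }\mathbf{H}_n(G,-)$ and $\mathbf{H}^n(G,-)\xrightarrow{\ \sim\ }\prod_{v}\mathbf{H}^n(G_v,-)$ for all $n\ge 3$. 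The two free steps $P_2,P_3$ are exactly what push the vanishing range to $n\ge 3$, and the passage from a direct sum to a product in cohomology is the usual behaviour of $\operatorname{Ext}(\bigoplus,-)$. Identifying these isomorphisms with the change-of-groups natural transformations completes the argument.
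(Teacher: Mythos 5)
Your proposal is correct and takes essentially the same route as the paper: (i) via the free basis $\mathop{\bigcup}\limits_{r\in R_0}(\lsup{X_r}{r})$ and the bijections $X_r\simeq G/G_r$; (ii) realizing~\eqref{eq:1} as the augmented chain complex coming from the free action of $\gen{\lsup{F}{\mkern-2mu R_0}}$ on $T$, and~\eqref{eq:2} as the augmented cellular chain complex of the contractible $G$-CW-complex obtained by equivariantly coning each two-cell from a new central vertex fixed by $G_r$ --- which is precisely the ``alternatively'' construction the paper describes; and (iii) by splitting off $\ker\epsilon$, which~\eqref{eq:2} exhibits as having a free $\Z G$-resolution of length at most two, and feeding this into the long exact sequences together with Shapiro's lemma. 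Your explicit computation of the $G$-action on the basis in (i), via $fx = x'hc$ with $\lsup{x'}{[h,r]}\in N'$, supplies a detail that the paper merely asserts as compatibility of the composite isomorphism with the $G$-actions.
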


In the case where $F$ is free and $\abs{F\leftmod VT} = 1$, (i) is
Lyndon's  Identity Theorem \mbox{\cite[Section~7]{Lyndon};} in the case where
$F$ acts freely and  $\abs{R_0} = 1$, (i) is Lyndon's Simple Identity Theorem~\cite[Section~7]{Lyndon}.
In the case where $\abs{R_0} = 1$, and $F\leftmod T$ has one edge and two vertices,
(i) is Howie's Simple Identity Theorem~\cite[Theorem~11]{Howie84}.
The  results (ii), (iii)  are straightforward consequences of~(i);
see~\cite[Theorem~11.1]{Lyndon} and~\cite[Theorem~3]{Howie84}.

\begin{proof}[Proof of Corollary~\ref{Cor2}] For each $r \in R_0$, we have bijective correspondences
$$X_r\mkern8mu\simeq\mkern8muF/(\gen{\lsup{F}{\mkern-2mu R_0}}\gen{\sqrt[F]{r}})
\mkern8mu\simeq\mkern8muG/\gen{\sqrt[F]{r}\gen{\lsup{F}{\mkern-2mu R_0}}}\mkern8mu\simeq\mkern8muG/G_r.$$
 Since $\gen{\lsup{F}{\mkern-2mu R_0}}
= \gp{ \mathop{\textstyle\bigvee}\limits_{r\in R_0} (\lsup{X_r}{ r})}{\quad}$, we then have isomorphisms of abelian groups
$$\gen{\lsup{F}{\mkern-2mu R_0}}^{\text{ab}}\mkern8mu\simeq\mkern8mu
\Z[\,\ \mathop{\textstyle\bigvee}\limits_{r\in R_0} (\lsup{X_r}{ r})\,]
\mkern8mu\simeq\mkern8mu \Z[\textstyle \mathop{\bigvee}\limits_{r\in R_0} (X_r)\,]
\mkern8mu\simeq\mkern8mu \Z[\textstyle \mathop{\bigvee}\limits_{r\in R_0} (G/G_r)]
\mkern8mu\simeq\mkern8mu  \mathop{\bigoplus}\limits_{r\in R_0} \Z[ G/G_r].$$
The composite isomorphism of abelian groups $\gen{\lsup{F}{\mkern-2mu R_0}}^{\text{ab}}
\mkern8mu\simeq\mkern8mu  \mathop{\bigoplus}\limits_{r\in R_0} \Z[ G/G_r]$\vspace{-2.6mm}
is compatible with the $G$-actions, and we find that (i) holds.

By Theorem~\ref{Thm:1}(i),   $\gen{\lsup{F}{\mkern-2mu R_0}}$ acts freely on $T$.
By~\cite[Definitions I.8.1]{DicksDunwoody89}, the fundamental group of $\gen{\lsup{F}{\mkern-2mu R_0}}\leftmod T$
can be identified with $\gen{\lsup{F}{\mkern-2mu R_0}}$.
Then (i) and~\cite[Theorem I.9.2]{DicksDunwoody89} give a sequence as in~\eqref{eq:1}.
We leave it as an exercise to  construct  a sequence as in~\eqref{eq:2} which maps
onto the above-constructed sequence in~\eqref{eq:1} with kernel a short exact sequence.
Alternatively, one can arrange for the sequences in~\eqref{eq:1} and~\eqref{eq:2} to be the
 augmented cellular chain complexes of  acyclic, simply connected,
  hence  contractible,
CW-complexes on which $G$ acts by permuting the cells.  Here,
the second CW-complex is obtained from the first CW-complex
by $G$-equivariantly drawing on each two-cell a point and a finite set of edges joining the new point to old points.
This second CW-complex has the property that every non-trivial finite
subgroup of $G$ fixes exactly one point of the space and the fixed point is a zero-cell.

For the short exact sequence
$$0 \to \ker \epsilon \to \textstyle \mathop{\bigoplus}\limits_{v \in R_0 \vee V_0} \Z[G/G_v]
\xrightarrow{\epsilon} \Z \to 0$$
of left $\Z G$-modules  where $\epsilon$ is the corresponding augmentation map, we see from~\eqref{eq:2}  that   $\ker \epsilon$   has
a free $\Z G$-resolution of length at most two.
Resulting long exact sequences in homology then
show that (iii) holds.
\end{proof}

\bigskip

\noindent{\textbf{\Large{Acknowledgments}}}

\medskip
\footnotesize

The research of the first- and second-named authors was
jointly funded by the MCI (Spain) through project MTM2008\d101550.

We are grateful to Ian Chiswell and Jim Howie  for several helpful observations.

\bibliographystyle{amsplain}

\textsc{Departament de  Matem\`atiques,
Universitat Aut\`onoma de Barcelona,
E-08193 Bella\-terra (Barcelona), Spain}

\emph{E-mail address}{:\;\;}\url{yagoap@mat.uab.cat}

\medskip

\textsc{Departament de  Matem\`atiques,
Universitat Aut\`onoma de Barcelona,
E-08193 Bella\-terra (Barcelona), Spain}

\emph{E-mail address}{:\;\;}\url{dicks@mat.uab.cat}

\emph{URL}{:\;\;}\url{http://mat.uab.cat/~dicks/}

\medskip

\textsc{Department of Mathematics,
Virginia Tech,
Blacksburg, VA 24061-0123, USA}

\emph{E-mail address}{:\;\;}\url{linnell@math.vt.edu}

\emph{URL}{:\;\;}\url{http://www.math.vt.edu/people/plinnell/}
\end{document}